\newtheorem{theorem}{Theorem}[section]
\newtheorem{lemma}[theorem]{Lemma}
\newtheorem{corollary}[theorem]{Corollary}
\theoremstyle{definition}
\theoremstyle{remark}
\newtheorem{remark}[theorem]{Remark}
\DeclareMathOperator{\dv}{div}
\DeclareMathOperator{\curl}{curl}
\DeclareMathOperator{\tr}{tr}
\DeclareMathOperator*{\essup}{ess\,sup}
\DeclareMathOperator*{\essinf}{ess\,inf}
\numberwithin{equation}{section}
\begin{document}

\title{The Brinkman-Fourier System with Ideal Gas Equilibrium}

\author{Jan-Eric Sulzbach\footnote{Department of Applied Mathematics, Illinois Institute of Technology, Chicago, IL 60616, United States} \and {Chun Liu$^*$}\footnote{corresponding author: cliu124@iit.edu}}

\date{}

\maketitle

\begin{abstract}
In this work, we will introduce a general framework to derive the thermodynamics of a fluid mechanical system, which guarantees the consistence between the energetic variational approaches with the laws of thermodynamics. 
In particular, we will focus on the coupling between the thermal and mechanical forces. 
We follow the framework for a classical gas with ideal gas equilibrium and present the existences of weak solutions to this thermodynamic system coupled with the Brinkman-type equation to govern the velocity field.

\end{abstract}

\section{Introduction}
\label{intro}
The Navier-Stokes-Fourier system, known for its power in modeling the thermodynamics of fluids, is an extension of the classical Navier-Stokes equations.
Although a lot can be modeled in the classical setting, it has its limitations  encompassing of reality. 
More recent versions have included a thermodynamic component in order to better describe more complex system. 
Applications of this can be found in engineering, meteorology and even astrophysics \cite{Bird}, \cite{Holmes}, \cite{Chepyzhov}, \cite{Feireisl7}.
This has opened up avenues for researchers to update and expand upon previous applications of the Navier-Stokes-Fourier system.\\ 

More recent advances and methods for the full Navier-Stokes-Fourier system can be found in \cite{Novotny} and the references therein. Results in the field typically focus on proving existence and uniqueness of solutions. 
Extensive theory on weak solutions to the Navier-Stokes-Fourier system has been developed in \cite{Feireisl5}, \cite{Feireisl1}, \cite{Feireisl3}, and is continuously gaining interest among researchers today. 
The existence of weak solutions can even be extended to other domains, such as when Lipschitz boundaries are present \cite{Poul}. 
Many mathematicians investigate long-term behavior of solutions, as in \cite{Feireisl4}, in order to gain information about equilibrium states, bounds on the energy, etc.\\

For the equations of viscous and heat-conductive gases, it can be shown that unique and global strong solutions exist \cite{Danchin}.
Other results, focusing on different aspects of the thermodynamics of fluids are the study of the gas dynamics of thermal non-equilibrium models \cite{Zeng2}, \cite{Zeng}.
Simplified models with temperature dependent coefficients \cite{DeAnna2} and other thermal effects such as cross diffusion \cite{Bulicek} and heat convection \cite{Nishida} have been studied to see the effect of the temperature dependence in the system of equations.
Other types of non-isothermal models are discussed in \cite{Rocca}, \cite{DeAnna1} and \cite{Pliu}.\\
Most recent results were obtained by \cite{Tarfulea} and \cite{Lai} deriving better a priori bounds and showing positivity for the absolute temperature and by \cite{Sulzbach21}, where the well-posed of a non-isothermal reaction-diffusion sytem in a critical Besov space is shown.
In \cite{Yoshimura} the authors present a different free energy formulation of the Navier-Stokes-Fourier system. 
Other works in this field with similar approaches can be found in \cite{Bulivcek} and \cite{Dreyer}. 
For the Poisson-Nerst-Planck-Fourier system a global existence theorem could be proven \cite{Hsieh}.\\

The new and general approach of this paper is that we start from a given free energy function and dissipation functional.
Next, employing the basic laws of thermodynamic in combination with the energetic variational approach and appropriate constitutive relations, we derive a system of partial differential equations describing the thermodynamics of the fluid.
Due to the generality of this approach it can be easily adapted to different regimes.\\

In this paper we will focus on the ideal gas equilibrium, i.e. the equation of state for the internal energy and the pressure depends linearly on the density and absolute temperature.
For the momentum equation we choose a Brinkman-type diffusion, i.e an interpolation between the Stokes and Darcy's law.
The details can be found in Section 2.

The aim of this paper is to prove existence of local-in-time weak solutions of the  following system of partial differential equations
\begin{align}\label{eq 1.1}
&\partial_t \rho +\dv(\rho u)=0,\\
&\nabla p=\mu \Delta u-\nu \rho u,\\\label{eq 1.3}
&\partial_t s+\dv(s u)=\Delta +\dv\big(\frac{\kappa\nabla \theta}{\theta}\big),
\end{align} 
where $\rho$ is the density, $u$ is the velocity and $\theta$ the absolute temperature. 
The entropy production rate is denoted by $\Delta$ and depends on $\rho$, $\theta$ and $u$.
Moreover, the pressure is defined as $p=k_2\rho\theta$ and corresponds to the ideal gas case and the dissipation part in the momentum equation corresponds to the Brinkman-type equation.
The entropy is defined as $-s=k_2\rho(\log\rho-\frac{3}{2}(\log\theta+1))$.
In addition, we aim at a better understanding of the stability and dependence on the initial data and second law of thermodynamics as expressed in \cite{Dafermos}.

In Section 3 we obtain the weak formulation of this system consisting of equations \eqref{eq 1.1}-\eqref{eq 1.3} and show that the absolute temperature is positive.
Moreover, we derive a priori bounds to the equations.

In Section 4 we state the existence theorem \ref{theorem 2} and the higher regularity theorem \ref{theorem 4}.
In the proofs we follow the ideas and techniques presented in \cite{Feireisl2} and \cite{Novotny}.
Key parts are the application of the div-curl lemma and weak $L^1$-convergence results to control the absolute temperature in the entropy equation.

\section{Derivation of the System}
\label{derivation}

In this section we derive the general model describing the thermal effects of a fluid with the example of the ideal gas case.
For the fluid model we consider a Brinkman-type equation.\\
The unknown variables in the system are:
\begin{enumerate}
\item a non-negative measurable function $\rho=\rho(t,x)$ the mass density;
\item a vector field $u=u(t,x)$ denoting the velocity field of the fluid;
\item a positive measurable function $\theta=\theta(t,x)$ the absolute temperature.
\end{enumerate}
The new approach in the derivation of thermodynamic models focuses on the free energy of the system $\psi(\rho,\theta)$ as a starting point and then applies the laws of thermodynamics and the energetic variational approach to obtain the complete model.

The notion of free energy is a useful concept in classical thermodynamics \cite{Baierlein}, \cite{Salinas} because the change in the free energy is the maximum amount of work that a thermodynamic system can perform in a process at constant temperature, and its sign indicates whether a process is thermodynamically favorable or forbidden.
In the following derivation we use the definition of the Helmholtz free energy.
The second thermodynamic concept we use is the entropy.
Entropy can be understood as the measure of disorder of the system.
Note that for fixed density the entropy is a convex function in the temperature and that for a fixed temperature the entropy is a convex function in the density. 

In the derivation we try to keep the statements as general as possible, since we can apply this framework to different settings, i.e. the porous media case or the Allen-Cahn and Cahn-Hilliard model.\\

From the thermodynamics of the ideal gas, \cite{McQuarrie} and \cite{Berry}, we know that the relation between internal energy and the product of temperature and density is linear and similar for the relation between the pressure and the product of temperature and density.
Working backwards from this observation have the following definitions.

For the ideal gas we have the following definition of the \textbf{free energy}
\begin{align}\label{eq 2.1}
\psi(\rho,\theta)=k_2 \theta\rho\log\rho-k_1\rho\theta\log\theta.
\end{align}
The \textbf{entropy} of a system is defined as follows
\begin{align}\label{eq 2.2}
s(\rho,\theta):= -\psi_\theta= -\rho\big(k_2\log\rho-k_1(\log\theta +1)\big),
\end{align}
where $\psi_\theta$ denotes the derivative of $\psi$ with respect to $\theta$.
\begin{remark}
We note that the $\theta\log\theta$ in the free energy is the weakest convex function with faster than linear growth. 
Thus we have a non-vanishing contribution of the temperature in the entropy.
\end{remark}
Moreover, the convexity of the free energy with respect to the temperature allows us to solve the equation of the entropy \eqref{eq 2.2} for $\theta$.
For the given choice of the free energy this can be done explicitly
\begin{align*}
\theta(\rho,s)= \frac{1}{e}\rho^{k_2/k_1}e^{s/k_1\rho}
\end{align*}
whereas in the general case this can be only done implicitly. \\
The \textbf{internal energy} is the Legendre transform in $\theta$ of the free energy, i.e
\begin{align}\label{eq 2.3}
e(\rho,\theta):= \psi -\psi_\theta \theta=\psi +s\theta= k_1\rho\theta.
\end{align}
\begin{remark}
Instead of having $(\rho,\theta)$ as state variables, we can also consider $(\rho,s)$ as new state variables yielding
\begin{align*}
e_1(\rho,s)=e(\rho,\theta(\rho,s))= \frac{k_1}{e}\rho^{1+k_2/k_1}e^{s/k_1\rho}.
\end{align*}
This interpretation of the free energy is crucial because in the laws of thermodynamics we have $\rho$ and $s$ as state variables.
\end{remark}
We assume that we have a closed system, i.e. $\rho$ satisfies the following \textbf{continuity equation}
\begin{align}\label{eq. 2.4}
\rho_t +\dv(\rho u)=0
\end{align}
Thus integrating over the domain $\Omega$ and assuming that the microscopic velocity $u$ satisfies $u\cdot n=0$ on the boundary we obtain the \textbf{conservation of mass}
\begin{align}\label{eq. 2.5}
\frac{d}{dt}\int_\Omega \rho(t,x)dx =0.
\end{align}
In addition, we assume that $\theta$ moves with the particle and that the kinematics of the temperature $\theta$ are governed by a \textbf{transport equation for the temperature}. 
In other words, the temperature $\theta$ is transported along the trajectory of the flow map with the velocity $u$.
\begin{remark}\label{remark 3}
The assumption of the transport of the temperature $\theta$ with the particles is suitable for situations like the ideal gas, where the temperature is not determined by the environment or the case of a solution, where the temperature is determined by the temperature of the solvent.
\end{remark}
\begin{remark}\label{remark 4}
We note that we have a weak duality of the time evolution of the temperature and the total derivative of the entropy in the following way.\\
If $\theta$ evolves as $\frac{d}{dt}\theta=\theta_t+u\cdot\nabla \theta$ then by testing this equation with the entropy $s$ in the weak form yields
\begin{align*}
\int_\Omega \theta_t s +u\cdot\nabla \theta s\, dx=-\int_\Omega s_t \theta +\dv(su)\theta\, dx.
\end{align*}
Thus the \textbf{kinematics of the entropy} are
\begin{align}\label{eq. 2.6}
\frac{d}{dt}s=s_t+\dv(su)
\end{align}
which are crucial in applying the laws of thermodynamics c.f. equations \eqref{eq 2.15} and \eqref{eq 2.16}.
\end{remark}

Next, we choose the \textbf{total energy} and \textbf{dissipation} as follows
\begin{align*}
E^{tot}=\int_\Omega \psi(\rho,\theta),~~\mathcal{D}^{tot}=\int_\Omega \nu \rho u^2 +\mu |\nabla u|^2
\end{align*}
and employ the energetic variational approach to derive the forces \cite{giga2017}.

\begin{remark}
Note that dissipation depends on both $u$ and $\nabla u$ and hence $\mu$ and $\nu$ can be seen as interpolation parameters between the two pure cases.
\end{remark}

Using the least action principle we have
\begin{align*}
A(x(t))=\int_0^T\mathcal{L}dt=-\int_0^T E^{tot}dt =-\int_0^T\int_\Omega \psi(\rho(x,t),\theta(x,t))dxdt,
\end{align*}
where $A$ denotes the action and $\mathcal{L}$ is the Lagrangian of the action.
Since we have no contribution of the kinetic part in the total energy the variation of the action yields
\begin{align*}
\delta_x A(x)=-\int_0^T\int_\Omega f_{cons}\delta_x dxdt.
\end{align*}
The next step is to compute the variation of the action, where we first rewrite it in Lagrangian coordinates
\begin{align*}
A(x(X,t))=-\int_0^T\int_{\Omega_0^X} \psi\bigg(\frac{\rho_0(X)}{\det F},\theta_0(X)\bigg)\det Fdxdt,
\end{align*}
where $F$ denotes the deformation gradient $F=\frac{\partial x}{\partial X}$.
We set $y(X,t)=\tilde{y}(x(X,t),t)$ and the variation $x^\epsilon=x+\epsilon y$, $F^\epsilon=\frac{\partial x^\epsilon}{\partial X}$.
Then
\begin{align*}
\frac{d}{d\epsilon}\bigg|_{\epsilon=0}A(x^\epsilon(X,t))&=-\frac{d}{d\epsilon}\bigg|_{\epsilon=0}\int_0^T\int_{\Omega_0^X} \psi\bigg(\frac{\rho_0(X)}{\det F^\epsilon}\theta_0(X)\bigg)\det F^\epsilon dXdt\\
&=-\int_0^T\int_{\Omega_0^X}\frac{\partial}{\partial \phi} \psi\bigg(\frac{\rho_0(X)}{\det F},\theta_0(X)\bigg)\times\\
&~~~~~~~\bigg[\frac{-\rho_0}{(\det F)^2}\det F \tr\big(F^{-1}\frac{\partial y}{\partial X}\big)\bigg] \det F dXdt\\
&-\int_0^T\int_{\Omega_0^X} \psi\bigg(\frac{\rho_0(X)}{\det F},\theta_0(X)\bigg)\bigg[\det F \tr\big(F^{-1}\frac{\partial y}{\partial X}\big) F^{-1}\bigg] dXdt.
\intertext{We transform the integral back to Eulerian coordinates and obtain}
&=-\int_0^T\int_{\Omega_t^x} - \frac{\partial}{\partial \rho}\psi\big(\rho,\theta\big) \rho \nabla_x\cdot \tilde{y}+  \psi\big(\rho,\nabla_x \phi ,\theta\big)\nabla_x\cdot\tilde{y} dxdt.
\intertext{Integration by parts yields}
&=-\int_0^T\int_{\Omega_t^x}\bigg[\nabla_x\bigg(\rho \frac{\partial \psi}{\partial \rho}\bigg)-\nabla_x \psi \bigg]\cdot\tilde{y} dxdt\\
&~~~-\int_0^T\int_{\partial\Omega_t^x}\bigg(-\rho \frac{\partial \psi}{\partial \rho}+\psi\bigg)\tilde{y}\cdot n dS_xdt
\intertext{where $\tilde{y}\cdot n=0$ and thus the boundary terms equate to 0. Putting everything together we have}
\delta_x A(x)&=-\int_0^T\int_{\Omega_t^x}\nabla\big(\psi_\rho \rho -\psi\big)\cdot\tilde{y}\, dxdt.
\end{align*}
for an arbitrary smooth vector $\tilde{y}(x,t)$ satisfying $\tilde{y}\cdot n =0$ on $\partial \Omega$.\\
Thus we have
\begin{align*}
f_{cons}= \nabla\big (\rho \psi_\rho-\psi \big)=:\nabla p,
\end{align*}
where we define the gradient of the pressure to equal the conservative force.
By this definition the \textbf{pressure law} satisfies the following relation
\begin{align}\label{eq 2.4}
p(\rho,\theta)=\psi_\rho \rho -\psi= k_2\rho \theta.
\end{align}

\begin{lemma}\label{lemma 0}
The pressure satisfies
\begin{align*}
\nabla p=\rho \nabla \psi_\rho +s\nabla \theta.
\end{align*}
\end{lemma}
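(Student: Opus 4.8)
The plan is to start from the definition of the pressure in \eqref{eq 2.4}, namely $p = \rho\,\psi_\rho - \psi$, and simply differentiate in space, treating $\psi = \psi(\rho,\theta)$ as a composite function of $x$ through both $\rho$ and $\theta$. Thus $\nabla p = \nabla(\rho\,\psi_\rho) - \nabla\psi$. Expanding each piece by the chain rule: $\nabla(\rho\,\psi_\rho) = \psi_\rho\,\nabla\rho + \rho\,\nabla(\psi_\rho)$, and $\nabla(\psi_\rho) = \psi_{\rho\rho}\,\nabla\rho + \psi_{\rho\theta}\,\nabla\theta$, while $\nabla\psi = \psi_\rho\,\nabla\rho + \psi_\theta\,\nabla\theta$.

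Next I would substitute and cancel: the $\psi_\rho\,\nabla\rho$ terms cancel between $\nabla(\rho\psi_\rho)$ and $\nabla\psi$, leaving
\begin{align*}
\nabla p = \rho\,\psi_{\rho\rho}\,\nabla\rho + \rho\,\psi_{\rho\theta}\,\nabla\theta - \psi_\theta\,\nabla\theta.
\end{align*}
Now I recognize $\rho\,\psi_{\rho\rho}\,\nabla\rho + \rho\,\psi_{\rho\theta}\,\nabla\theta$ as exactly $\rho\,\nabla(\psi_\rho)$, so this equals $\rho\,\nabla\psi_\rho - \psi_\theta\,\nabla\theta$. Finally, using the definition of the entropy \eqref{eq 2.2}, $s = -\psi_\theta$, the last term becomes $+s\,\nabla\theta$, giving $\nabla p = \rho\,\nabla\psi_\rho + s\,\nabla\theta$, which is the claim. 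One can double-check against the explicit ideal-gas forms: with $\psi_\rho = k_2\theta(\log\rho + 1) - k_1\theta\log\theta$ and $s = -\rho(k_2\log\rho - k_1(\log\theta+1))$, a direct computation of both sides against $\nabla p = k_2\nabla(\rho\theta)$ confirms the identity, though the clean derivation above does not need the explicit forms.

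There is essentially no real obstacle here — the statement is a pointwise algebraic/calculus identity, and the only thing to be slightly careful about is bookkeeping in the chain rule (making sure the mixed partial $\psi_{\rho\theta}$ terms are correctly grouped so that $\rho\,\psi_{\rho\rho}\,\nabla\rho + \rho\,\psi_{\rho\theta}\,\nabla\theta$ is reassembled into $\rho\,\nabla\psi_\rho$ rather than left expanded). The identity holds for any smooth free energy $\psi(\rho,\theta)$, not just the ideal-gas choice \eqref{eq 2.1}, so the lemma is really a structural fact about the thermodynamic pressure; I would phrase the proof in that generality and note the ideal-gas formula only as a consistency check.
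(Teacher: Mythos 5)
Your proposal is correct and follows essentially the same route as the paper: differentiate $p=\rho\psi_\rho-\psi$, apply the product rule, cancel the $\psi_\rho\nabla\rho$ terms against $\nabla\psi=\psi_\rho\nabla\rho+\psi_\theta\nabla\theta$, and invoke $s=-\psi_\theta$. The intermediate expansion of $\nabla\psi_\rho$ into second derivatives, which you then reassemble, is an unnecessary but harmless detour; otherwise the arguments coincide.
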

\begin{proof}
From the definition of the pressure we have $p(\rho,\theta)=\psi_\rho \rho -\psi$ and thus we compute 
\begin{align*}
\nabla p(\rho,\theta)&=\nabla(\psi_\rho \rho -\psi)=\rho \nabla \psi_\rho +\psi_\rho \nabla \rho-\nabla \psi\\
&=\rho \nabla \psi_\rho +\psi_\rho \nabla \rho-\psi_\rho\nabla \rho-\psi_\theta\nabla \theta= \rho \nabla \psi_\rho+s\nabla \theta.
\end{align*}
\end{proof}

\begin{remark}
From classical thermodynamics and statistical mechanics we know that the \textbf{ideal gas law} is 
\begin{align*}
pV=NkT
\end{align*}
where $V$ is the volume, $N$ the number of particles and $k$ the Boltzmann constant \cite{Salinas}.
Using tools from statistical mechanics, we can derive that 
\begin{align*}
S(N,V,E) \equiv S(N,V^{2/3}E)
\end{align*}
and as a consequence for a reversible adiabatic process we obtain 
\begin{align*}
V^{2/3}E = const.
\end{align*}
This relates the internal energy and the pressure as follows
\begin{align*}
e(\rho,\theta)=\frac{3}{2}p(\rho,\theta).
\end{align*}
Thus we obtain a relation between the two constants $k_1$ and $k_2$
\begin{align*}
k_1=\frac{3}{2}k_2.
\end{align*}
\end{remark}

Following the maximum dissipation law, we now compute the variation of the total dissipation
\begin{align*}
\frac{1}{2}\delta_u \mathcal{D}=\int_\Omega\big(\nu\rho u-\mu\Delta u\big)\tilde{u} dx =\int_\Omega f_{diss}\tilde{u}.
\end{align*}
Using the classical \textbf{Newton's force balance} 
\begin{align*}
f_{cons}+f_{diss}=0
\end{align*}
yields a \textbf{Brinkman-type equation} \cite{Brinkman}, \cite{Durlofsky}, which interpolates between the Darcy's law and the Stokes equation
\begin{align}\label{eq. 2.11}
\nabla p=\mu \Delta u -\nu \rho u.
\end{align}

\begin{remark}
An overview over the energetic variational approach can be found in \cite{Lin}.
More recent approaches in adding the thermodynamics into the framework of the energetic variational approach can be found in \cite{Pliu} and \cite{DeAnna1}.
\end{remark}

The next step is to verify the physicality of the given free energy. 
Thus, combining the above results, we can show that the \textbf{Gibb's equation} is satisfied
\begin{align}\label{eq 2.5}
\theta Ds(\rho,\theta)=De(\rho,\theta)+p(\rho,\theta)D\bigg(\frac{1}{\rho}\bigg),
\end{align}
where we use the notation $Df$ to denote a total differential.
Moreover we can interpret this in terms of classical thermodynamics as follows
\begin{align*}
E=TS-pV~~\text{ and } DE=TDS-pDV.
\end{align*}

For further details in classical thermodynamics we refer to \cite{McQuarrie}, \cite{Berry} and \cite{Salinas}.\\

Next, we provide two useful basic Lemmas.
\begin{lemma}\label{lemma 1}
The internal energy as a function of the density and entropy satisfies
\begin{align*}
{e_1}_s(\rho,s)=\theta
\end{align*}
\end{lemma}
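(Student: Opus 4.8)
The plan is to treat $e_1$ as the composition $e_1(\rho,s)=e(\rho,\theta(\rho,s))$, where $\theta(\rho,s)$ is the function obtained by inverting the relation $s=-\psi_\theta(\rho,\theta)$ in the temperature variable. This inversion is legitimate precisely because $\psi$ is strictly convex in $\theta$, as recorded after \eqref{eq 2.2}: then $\psi_{\theta\theta}\neq 0$, the map $(\rho,\theta)\mapsto(\rho,s)$ is a local diffeomorphism, and $\theta(\rho,s)$ is differentiable. I would flag this hypothesis explicitly at the outset, since it is the only point in the argument that requires genuine care.

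First I would differentiate $e_1$ with respect to $s$ by the chain rule, $\partial_s e_1 = e_\theta(\rho,\theta)\,\partial_s\theta(\rho,s)$, reducing the claim to computing the two factors. For the first factor, using the representation $e=\psi-\psi_\theta\theta$ from \eqref{eq 2.3} gives $e_\theta = \psi_\theta - \psi_{\theta\theta}\theta - \psi_\theta = -\theta\,\psi_{\theta\theta}$. For the second factor, I would differentiate the defining identity $s=-\psi_\theta(\rho,\theta(\rho,s))$ with respect to $s$, obtaining $1=-\psi_{\theta\theta}(\rho,\theta)\,\partial_s\theta$, hence $\partial_s\theta = -1/\psi_{\theta\theta}$. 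Multiplying the two factors yields $\partial_s e_1 = (-\theta\,\psi_{\theta\theta})\cdot(-1/\psi_{\theta\theta}) = \theta$, which is the assertion. Note that this argument is structural and does not use the specific ideal-gas form of $\psi$, so it also covers the general setting mentioned in the derivation.

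As a consistency check I would also verify the identity directly from the explicit formulas available in this section: with $e_1(\rho,s)=\frac{k_1}{e}\rho^{1+k_2/k_1}e^{s/k_1\rho}$ one computes $\partial_s e_1 = \frac{k_1}{e}\rho^{1+k_2/k_1}e^{s/k_1\rho}\cdot\frac{1}{k_1\rho} = \frac{1}{e}\rho^{k_2/k_1}e^{s/k_1\rho}$, which is exactly the closed form of $\theta(\rho,s)$ given just before \eqref{eq 2.3}. I do not expect a real obstacle here; the proof is essentially the standard Legendre-duality identity, and the sole subtlety is the invertibility of the entropy relation in $\theta$, guaranteed by convexity of the free energy.
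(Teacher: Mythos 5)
Your argument is correct, and it is the same Legendre-duality computation as the paper's, differing only in bookkeeping. The paper writes $e_1(\rho,s)=\psi(\rho,\theta(\rho,s))+s\,\theta(\rho,s)$ and differentiates directly in $s$, getting ${e_1}_s=\psi_\theta\theta_s+\theta+s\theta_s$, where the two $\theta_s$ terms cancel because $s=-\psi_\theta$; it therefore never needs to compute $\partial_s\theta$ or divide by $\psi_{\theta\theta}$, only that $\theta(\rho,s)$ is well defined and differentiable. You instead substitute $s=-\psi_\theta$ first, work with $e=\psi-\psi_\theta\theta$, and chain-rule through $e_\theta=-\theta\psi_{\theta\theta}$ and $\partial_s\theta=-1/\psi_{\theta\theta}$, which makes the nondegeneracy hypothesis explicit (and is what justifies the inversion in the first place) at the cost of requiring $\psi_{\theta\theta}\neq 0$ inside the computation itself. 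One small correction: for the ideal-gas free energy \eqref{eq 2.1} one has $\psi_{\theta\theta}=-k_1\rho/\theta<0$, so $\psi$ is strictly \emph{concave} in $\theta$ (equivalently, the entropy $-\psi_\theta$ is strictly increasing in $\theta$); what your proof actually uses is only $\psi_{\theta\theta}\neq 0$, so the argument stands, and your explicit check against $e_1(\rho,s)=\frac{k_1}{e}\rho^{1+k_2/k_1}e^{s/k_1\rho}$ and the closed form of $\theta(\rho,s)$ is consistent with the paper.
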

\begin{proof}
Note that $\psi_1(\rho,s)=\psi(\rho,\theta)$. Then, $e_1(\rho,s)=e(\rho,\theta)=\psi +s\theta$ and thus 
\[{e_1}_s={\psi_1}s +\theta +s \theta_s=\psi_\theta \theta_s +\theta+s\theta_s= \theta.\]
\end{proof}
\begin{lemma}\label{lemma 2}
The internal energy in terms of density and entropy is related to the free energy in terms of density and temperature
\begin{align*}
\psi_\rho(\rho,\theta)={e_1}_\rho(\rho,s).
\end{align*}
\end{lemma}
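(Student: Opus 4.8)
The plan is to mimic the computation used in Lemma \ref{lemma 1}: write $e_1$ explicitly in the $(\rho,s)$ variables, differentiate with respect to $\rho$ while holding $s$ fixed, and use the defining relation $\psi_\theta=-s$ from \eqref{eq 2.2} to kill the contributions coming from the implicit dependence $\theta=\theta(\rho,s)$.

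Concretely, recall from \eqref{eq 2.3} that $e(\rho,\theta)=\psi(\rho,\theta)+s\theta$ with $s=-\psi_\theta(\rho,\theta)$, so that $e_1(\rho,s)=\psi(\rho,\theta(\rho,s))+s\,\theta(\rho,s)$, where $\theta(\rho,s)$ is the inverse of the map $\theta\mapsto-\psi_\theta(\rho,\theta)$, which exists by the convexity of $\psi$ in $\theta$ and is given explicitly above. Differentiating in $\rho$ at fixed $s$ and applying the chain rule to the first summand gives ${e_1}_\rho(\rho,s)=\psi_\rho(\rho,\theta)+\psi_\theta(\rho,\theta)\,\theta_\rho(\rho,s)+s\,\theta_\rho(\rho,s)$. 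Substituting $\psi_\theta(\rho,\theta)=-s$ makes the last two terms cancel, leaving ${e_1}_\rho(\rho,s)=\psi_\rho(\rho,\theta)$, which is the assertion.

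There is no genuine obstacle here; the only point requiring care is the bookkeeping of independent variables — $\psi_\rho$ denotes the partial derivative of $\psi(\rho,\theta)$ at fixed $\theta$, whereas ${e_1}_\rho$ is the partial derivative at fixed $s$ — so one must be explicit that the discrepancy consists exactly of the terms proportional to $\theta_\rho$, which vanish by the entropy relation \eqref{eq 2.2}. As an alternative one could instead combine Lemma \ref{lemma 0} with the Gibbs equation \eqref{eq 2.5}, but the direct chain-rule computation above is the shortest route.
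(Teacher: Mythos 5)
Your computation is exactly the paper's proof: differentiate $e_1(\rho,s)=\psi(\rho,\theta(\rho,s))+s\,\theta(\rho,s)$ in $\rho$ at fixed $s$ via the chain rule and cancel the $\theta_\rho$ terms using $\psi_\theta=-s$. Correct, and essentially identical to the argument in the paper.
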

\begin{proof}
We compute the right hand side and obtain 
\begin{align*}
{e_1}_\rho(\rho,s)= \psi_\rho+\psi_\theta \theta_\rho +s \theta_\rho=\psi_\rho .
\end{align*} 
\end{proof}
Now, we state the \textbf{laws of thermodynamic}.
The first law of thermodynamics relates the total derivative of the internal energy with work and heat
\begin{align}\label{eq 2.6}
De_1= \text{ work }+\text{ heat},
\end{align}
where both the work and heat are in divergence form and more specific the heat term can be expressed as $\nabla \cdot q$ with $q$ being the heat flux.\\
We recall that the total change in a quality can be expressed as a divergence term plus some additional term.
Applying this to the entropy we obtain
\begin{align}\label{eq 2.7}
s_t+\dv(su) = \nabla \cdot j +\Delta,
\end{align}
where the kinematics for the entropy are related to the transport of the temperature as expressed in equation \eqref{eq. 2.6}. 
And where we denote $j$ as the entropy flux and $\Delta$ as the entropy production rate.\\

The second law of thermodynamics states that the entropy production is non-negative:
\begin{align}\label{eq 2.8}
\Delta\geq 0.
\end{align}

In order to derive a consistent model we need to supplement our equations by constitutive relations.
The relation between the heat flux $q$ and the entropy flux $j$ is given by \textbf{Durhem relation} 
\begin{align}\label{eq 2.10}
j\theta=q,
\end{align}
and the relation between the heat flux $q$ and the absolute temperature $\theta$ is given by the \textbf{Fourier's law}
\begin{align}\label{eq 2.10 b}
q=\kappa \nabla \theta,
\end{align}
where $\kappa$ is the heat conductivity.\\

In order to find the expression for the entropy production rate $\Delta$ we compute 
\begin{align}\label{eq 2.15}
\frac{d}{dt}\int_\Omega e_1(\rho,s)dx &= \int_\Omega \big[e_{1,\rho}\rho_t +e_{1,s}s_t\big] dx\\
\intertext{Using the kinematics for the density $\rho$ from equation \eqref{eq. 2.4} we obtain }\nonumber
&=\int_\Omega\big[ e_{1,\rho}\big(-\nabla\cdot(\rho u)\big)+e_{1,s}s_t\big] dx\\
\intertext{Applying Lemma \ref{lemma 2} yields}\nonumber
&=\int_\Omega\big[ -\nabla \cdot\big(e_{1,\rho}\rho u\big)+ \big(\nabla \psi_\rho\rho\big)\cdot u +e_{1,s}s_t \big]dx\\
\intertext{In order to have the full expression for the gradient of the pressure we have to incorporate the term $s\nabla\theta$ which can only occur if the kinematics for the entropy are as in equation \eqref{eq. 2.6}. And by equation \eqref{eq 2.7} we have}\nonumber
&=\int_\Omega\big[ -\nabla \cdot\big(e_{1,\rho}\rho u+e_{1,s}su\big)+ \big(\nabla \psi_\rho\rho+s\nabla e_{1,s}\big)\cdot u\\\nonumber
&~~~~~ +e_{1,s}\big(\nabla \cdot j +\Delta\big) \big]dx\\
\intertext{By Lemma \ref{lemma 1} and the Durhem equation \eqref{eq 2.10} we have}\nonumber
&=\int_\Omega\big[ -\nabla \cdot\big((e_{1,\rho}\rho+\theta s)u\big)+ \big(\nabla \psi_\rho\rho+s\nabla \theta\big)\cdot u\\\nonumber
&~~~~~ +\nabla \cdot q -\frac{q\cdot\nabla \theta}{\theta}+\theta\Delta \big]dx\\
\intertext{Now, we can apply Lemma \ref{lemma 0} to obtain}\nonumber
&=\int_\Omega \big[-\nabla \cdot\big(e_{1,\rho}\rho u+\theta su\big)+\nabla \cdot q +\nabla(\psi_\rho \rho-\psi)\cdot u\\\nonumber
&~~~~~-\frac{q\cdot\nabla \theta}{\theta}+\theta\Delta \big]dx\\
\intertext{From the definition of the pressure and the absence of external forces and heat sources we have that}\nonumber
&=\int_\Omega\big[ \nabla p\cdot u -\frac{q\cdot\nabla \theta}{\theta}+\theta\Delta\big] dx
\intertext{where we used that the divergence terms equal to zero under the boundary conditions $u\cdot n=0$ and $\nabla \theta\cdot n=0$. Thus we have}\nonumber
&= \int_\Omega\big[ \big(\mu\Delta u -\nu \rho u \big)\cdot u -\frac{q\cdot\nabla \theta}{\theta}+\theta\Delta dx\big]\\
\intertext{and integration by parts yields}\label{eq 2.16}
&=\int_\Omega \big[-\mu|\nabla u|^2 -\nu \rho u^2 -\frac{q\cdot\nabla \theta}{\theta}+\theta\Delta \big]dx
\end{align}
where we used that by equation \eqref{eq. 2.11} $\nabla p=\mu\Delta u -\nu \rho u$.
Since there are no external forces or heat sources in our system the total internal energy must be conserved  and we obtain that
\begin{align}\label{eq 2.12}
\Delta= \frac{1}{\theta}\bigg(\mu |\nabla u|^2+\nu \rho |u|^2 +  \frac{\kappa|\nabla \theta|^2}{\theta}\bigg).
\end{align}
We note that the second law of thermodynamics $\Delta\geq 0$ is satisfied as long as $\theta>0$.
Thus 
\begin{align}
\frac{d}{dt}\int_\Omega e_1(\rho,s)dx =0
\end{align}
and the total entropy is increasing 
\begin{align}
\frac{d}{dt}\int_\Omega s(\rho,\theta)dx =\int_\Omega s_t+\dv(su)dx=\int_\Omega \dv j +\Delta\geq 0.
\end{align}

Combining the above results we are now able to state our system of equations.\\
The \textbf{model equations} are
\begin{align*}
\partial_t \rho +\dv(\rho u)&=0\\
\partial_t s + \dv(su)&= \dv\bigg(\frac{\kappa\nabla \theta}{\theta}\bigg) +\Delta\\
\nabla p&=\mu \Delta u -\nu \rho u,
\end{align*}
where
\begin{align*}
\Delta= \frac{1}{\theta}\bigg(|\nabla u|^2+\nu \rho |u|^2+ \frac{\kappa|\nabla \theta|^2}{\theta}\bigg)
\end{align*}
together with the state equations
\begin{align*}
-s(\rho,\theta)&= k\rho\big(\log\rho-\frac{3}{2}(\log\theta +1)\big)\\
p(\rho,\theta)&=k\rho\theta.
\end{align*}

\begin{remark}
The generality of this new approach in the derivation of the dynamics of thermodynamic systems extends beyond the simple ideal gas case with applications in the porous media equation or the Cahn-Hilliard equation, where the only adaption to the new model takes place in the free energy.
\end{remark}

\section{Preliminaries}
\label{prelim}
Starting from the derivation in the previous section we develop a thermodynamically consistent mathematical model based on the unknown variables $(\rho,\theta,u)$ satisfying the following properties
\begin{enumerate}
\item the problem admits a local-in-time solution for any initial data of finite energy;
\item the total energy of the system remains constant in the absence of external forces or heat sources;
\item the entropy of the system is increasing, i.e. the system evolves to a state maximizing the entropy;
\item weak solutions coincide with classical solutions provided they are smooth enough.
\end{enumerate}
\subsection{Weak Formulation}
\label{weak form}
Now, let us summarize the weak formulation of the problem derived in the previous chapter.  
Moreover, we also specify the minimal regularity of the solutions required and interpret the weak formulation in terms of partial differential equations provided that all quantities are smooth enough.\\

Let $\Omega\subset \mathbb{R}^n$ be a bounded Lipschitz domain, where $n=2,\, 3$.
Then we have the following equations.\\
\textbf{The continuity equation}
\begin{align}
\intertext{Weak formulation:}\label{eq 3.1}
\int_0^T\int_\Omega \rho\big(\partial_t \phi +u\cdot\nabla \phi\big)\,d x d t&=-\int_\Omega \rho_0\phi(0,\cdot)\,d x,
\intertext{where $\phi\in C_c^1([0,T)\times\overline{\Omega})$.}
\intertext{Minimal regularity of solutions required:}
\rho\geq 0,~~\rho\in L^1((0,T)\times\Omega),~~\rho u& \in L^1((0,T)\times\Omega).
\intertext{Formal interpretation:}
\partial_t \rho +\dv(\rho u)&=0~~\text{in }~ (0,T)\times \Omega,\\
\rho(0,\cdot)&=\rho_0,~~u\cdot n|_{\partial\Omega}=0.
\end{align}
\textbf{Brinkman-type equation}
\begin{align}
\intertext{Weak formulation:}
\int_\Omega k\rho\theta (t) \dv\varphi \,d x&=\int_\Omega \mu\nabla u:\nabla \varphi + \nu \rho u\cdot \varphi \,d x
\intertext{where $\varphi \in C_c^1(\overline{\Omega})$ and $\varphi|_{\partial\Omega}=0$.}
\intertext{Minimal regularity of solutions required:}
\rho\theta\in L^1(\Omega),~~&\nabla u\in L^1(\Omega),~~\rho u \in L^1(\Omega).
\intertext{Formal interpretation:}
k\nabla(\rho \theta)&=\mu\Delta u-\nu \rho u~~ \text{in }~\Omega,\\
u|_{\partial\Omega}&=0.
\end{align}
\textbf{Balance of internal energy}
\begin{align}
\intertext{Weak formulation:}
\int_0^T\int_\Omega \rho \theta(t)\partial_t\psi(t)\,d x d t&= \psi(0) E_0
\intertext{where $\psi \in C_c^1[0,T)$.}
\intertext{Minimal regularity of solutions required:}
\rho\theta\in L^1((0,T)\times&\Omega).
\intertext{Formal interpretation:}
\frac{d}{d t}\int_\Omega \rho\theta \,d x=0~~\text{in } (0,T),~~ \int_\Omega\rho_0\theta_0 \,d x&=E_0.
\end{align}
\textbf{Entropy production}
\begin{align}
\intertext{Weak formulation:}\begin{split}
\int_0^T\int_\Omega s\big(\partial_t + u\cdot \nabla \phi\big)\, d x d t &- \int_0^T\int_\Omega \frac{\kappa \nabla \theta}{\theta}\cdot\nabla \phi \,d x d t\\ 
&+\int_0^T\int_\Omega \sigma \phi \,d x d t =-\int_\Omega s_0\phi(0,\cdot)\,d x,\end{split}
\intertext{where $\phi\in C_c^1([0,T)\times\overline{\Omega})$ and }
&\sigma \geq \frac{1}{\theta}\bigg(\mu|\nabla u|^2+ \nu \rho |u|^2 + \frac{\kappa|\nabla\theta|^2}{\theta}\bigg).
\end{align}
\begin{align}
\intertext{Minimal regularity of solutions required:}
\theta>0 ~\text{ a.a on } (0,T)\times \Omega,~~\theta\in L^q((0,T)\times \Omega),~~\nabla \theta\in L^q((0,T)\times \Omega)&,~~q>1\\
s\in L^1((0,T)\times \Omega),~~\frac{\nabla \theta}{\theta}\in L^1((0,T)\times \Omega)&\\
\frac{|\nabla u|^2}{\theta}\in L^1((0,T)\times \Omega) ,~~\frac{\rho|u|^2}{\theta}\in L^1((0,T)\times \Omega),~~\frac{|\nabla \theta|^2}{\theta^2}\in L^1((0,T)\times \Omega)&. 
\intertext{Formal interpretation:}
\partial_t s+\dv(s u)= \sigma +\dv\bigg(\frac{\kappa\nabla \theta}{\theta}\bigg)~~\text{in } (0,T)\times \Omega&,\\
s(0,\cdot)=s_0,~~\nabla \theta\cdot n|_{\partial\Omega}=0,~~~~~~~~&
\intertext{where}\label{eq 3.19}
-s=k\rho(\log \rho -\frac{3}{2}(\log\theta +1)).~~~~~~~&
\end{align}

\subsection{A Priori Estimates}
\label{a priori}
The first observation we make is that the \textbf{conservation of mass} holds, i.e.
\begin{align}\label{eq 3.20}
\int_\Omega \rho(t,\cdot) \,d x=\int_\Omega \rho_0 \,d x= M_0 ~~\text{for a.a. } t\in (0,T).
\end{align}
The second estimate that follows from the balance of energy is the \textbf{energy estimate}
\begin{align}\label{eq 3.21}
ess\sup_{t\in (0,T)}\int_\Omega \rho\theta \,d x \leq C(\rho_0,\theta_0).
\end{align}
From the second law of thermodynamics we observe that the \textbf{total entropy} of the system $S$ defined as $S(t):=\int_\Omega s(t,x) d x$ is non-decreasing, i.e.
\begin{align}\label{eq 3.22}
\int_\Omega s(t,\cdot) \,d x \geq \int_\Omega s_0  \,d x~~\text{for a.a } t\in (0,T).
\end{align}
In the next step we show the \textbf{positivity of the absolute temperature}.

First, we state a Lemma that for for a finite initial entropy the temperature is positive on a set with positive Lebesgue measure.
To this end, we define two regions in the $(\rho,\theta)$-plane:
\begin{itemize}
\item non-degenerate region: low density and/or sufficiently large temperature
\begin{align*}
\rho\leq \overline{Z}\theta,~~\text{for some } \overline{Z}>0;
\end{align*}
\item degenerate region: high density  and/or very low temperature
\begin{align*}
\rho >\overline{Z}\theta,~~\text{for some } \overline{Z}>0.
\end{align*}
\end{itemize} 
Next, we set
\begin{align*}
s_\infty=\lim_{\theta\to 0}s(\rho,\theta)\geq -\infty~~\text{for any fixed }\rho.
\end{align*}

\begin{lemma}\label{lemma 3}
Let $\Omega\subset \mathbb{R}^n$ with $n=2,\,3$ be a bounded Lipschitz domain.
Assume that the non-negative functions $\rho,\,\theta\in L^1((0,T)\times\Omega)$ satisfy
\begin{align*}
\int_\Omega \rho d x=M_0,~~\int_\Omega s d x>M_0s_\infty + \delta~~\text{for some }\delta>0.
\end{align*}
Then there are $\underline{\theta}>0$ and $V_0>0$ such that
\begin{align*}
\big|\{x\in \Omega|~ \theta(\cdot,x)>\underline{\theta}\}\big|\geq V_0.
\end{align*}
\end{lemma}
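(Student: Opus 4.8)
The plan is to argue by contradiction. Suppose that no pair $\underline{\theta}>0$, $V_0>0$ with the stated property exists. Negating the conclusion, for every $\underline{\theta}>0$ and every $V_0>0$ we have $\big|\{x\in\Omega:\ \theta(\cdot,x)>\underline{\theta}\}\big|<V_0$; letting $V_0\downarrow 0$ for a fixed $\underline{\theta}$ forces $\big|\{\theta>\underline{\theta}\}\big|=0$, and then letting $\underline{\theta}\downarrow 0$ forces $\theta=0$ almost everywhere in $\Omega$. The goal is then to show that this is incompatible with $\int_\Omega s\,dx>M_0 s_\infty+\delta$ together with $\int_\Omega\rho\,dx=M_0>0$; since the latter gives $|\{\rho>0\}|>0$, the statement is essentially that a finite total entropy cannot coexist with a temperature that vanishes on a set carrying positive mass.

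The estimate I would use reads off the state equation \eqref{eq 3.19}, $s=k\rho\big(-\log\rho+\tfrac32\log\theta+\tfrac32\big)$, through two elementary bounds: first $-z\log z\le e^{-1}$ for all $z\ge 0$, so $\int_\Omega(-\rho\log\rho)\,dx\le e^{-1}|\Omega|<\infty$; second, on any set $\{\theta\le\underline{\theta}\}$ with $\underline{\theta}<e^{-1}$, the pointwise inequality $\log\theta\le\log\underline{\theta}<-1$ gives $s\le k e^{-1}+\tfrac32 k(\log\underline{\theta}+1)\,\rho$, where the coefficient of $\rho$ is negative. Under the contradiction hypothesis $\theta=0$ a.e., hence $\log\theta\equiv-\infty$ and $\int_\Omega\rho\log\theta\,dx=-\infty$ (the integrand is $-\infty$ on the positive-measure set $\{\rho>0\}$), so that $\int_\Omega s\,dx=-\infty$. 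This contradicts $\int_\Omega s\,dx>M_0 s_\infty+\delta$ (in particular, $\int_\Omega s\,dx$ is finite), and the lemma follows.

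For the quantitative refinement that is convenient later --- where $\underline{\theta}$ and $V_0$ should depend only on $M_0$, $\delta$ and the a priori constants --- I would instead keep $\underline{\theta},V_0$ small but positive, set $A=\{\theta\le\underline{\theta}\}$, $B=\{\theta>\underline{\theta}\}$ with $|B|<V_0$, and split $\int_\Omega s=\int_A s+\int_B s$. On $A$ the bound above yields $\int_A s\le k e^{-1}|\Omega|+\tfrac32 k(\log\underline{\theta}+1)\int_A\rho\,dx$; on $B$, using $\log\theta\le\theta-1$ together with the energy estimate \eqref{eq 3.21}, $\int_B s\le k e^{-1}V_0+\tfrac32 k\,C(\rho_0,\theta_0)$ remains bounded. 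The delicate point --- and the main obstacle --- is to keep a definite fraction of the mass in $A$, i.e. to bound $\int_B\rho\,dx$ from above even though $B$ has small measure, since a priori the mass could concentrate on $B$. This is exactly what the degenerate/non-degenerate decomposition $\{\rho>\overline{Z}\theta\}$ versus $\{\rho\le\overline{Z}\theta\}$ is for: on the degenerate part $\int\theta\,dx\le M_0/\overline{Z}$ is automatically small, while on the non-degenerate part $\rho\le\overline{Z}\theta$, so there $\theta$ is bounded below by $\rho/\overline{Z}$; alternatively, combining \eqref{eq 3.20}--\eqref{eq 3.22} with $\rho\log\theta\le\rho\theta$ produces an $L\log L$ bound $\int_\Omega\rho\log\rho\,dx\le C$, hence uniform integrability of $\rho$ and $\int_B\rho\,dx\le M_0/2$ once $V_0$ is small. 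With $\int_A\rho\,dx\ge M_0/2$ secured, $\int_\Omega s\le k e^{-1}(|\Omega|+V_0)+\tfrac32 k\,C(\rho_0,\theta_0)+\tfrac34 kM_0(\log\underline{\theta}+1)\to-\infty$ as $\underline{\theta}\downarrow 0$, again contradicting the lower bound on $\int_\Omega s\,dx$ once $\underline{\theta}$ is chosen small enough.
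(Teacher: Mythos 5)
Your first argument does prove the statement in its most literal reading, but only by trivializing it: negating ``there exist $\underline{\theta},V_0$'' for one fixed pair $(\rho,\theta)$ forces $\theta=0$ a.e., and for a single function the conclusion is simply equivalent to ``$\theta$ is not a.e.\ zero''. The content of Lemma \ref{lemma 3} --- and the reason the paper proves it by contradiction along sequences $(\rho_n,\theta_n)$ with $|\{\theta_n>1/n\}|<1/n$, following Feireisl--Novotn\'y --- is that $\underline{\theta}$ and $V_0$ can be chosen depending only on the data ($M_0$, $\delta$, $s_\infty$, $\Omega$), uniformly over all admissible $(\rho,\theta)$; this uniformity is exactly what is consumed right afterwards in Corollary \ref{prop 1} to produce the bound $\int_0^T\int_\Omega|\log\theta|^2+|\nabla\log\theta|^2\,dx\,dt\le c(data)$. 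The paper obtains the uniformity from the structural form $s=\rho S(Z)$, $Z=\rho/\theta$, $S(Z)\to s_\infty$, splitting $\Omega$ into the non-degenerate region $\{\rho_n\le\overline{Z}\theta_n\}$ and the degenerate one: in the limit the entire entropy must come from the far-degenerate region, where it is at most $S(Z)M_0\to M_0 s_\infty$, contradicting $\int_\Omega s\,dx>M_0 s_\infty+\delta$. Your per-function contradiction cannot yield constants of this kind, so it does not prove the lemma in the sense in which it is used.

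Your second, quantitative paragraph aims at the right statement but stops exactly at the step you yourself call delicate: ruling out concentration of mass on the small set $B=\{\theta>\underline{\theta}\}$. The patch you suggest (an $L\log L$ bound on $\rho$, hence uniform integrability and $\int_B\rho\,dx\le M_0/2$) is obtained from the energy estimate \eqref{eq 3.21} and the entropy lower bound \eqref{eq 3.22}, neither of which is a hypothesis of Lemma \ref{lemma 3}, which assumes only $\int_\Omega\rho\,dx=M_0$ and $\int_\Omega s\,dx>M_0 s_\infty+\delta$. This is not cosmetic: without some additional uniform control of $\theta$ (in the paper's proof it enters tacitly when $\theta_n\to 0$ in $L^1$ is deduced from convergence in measure), one can concentrate $\rho$ on an arbitrarily small set $E$ and take $\theta$ huge there, keeping $\int_\Omega s\,dx$ above any fixed level while $|\{\theta>\underline{\theta}\}|=|E|$ is arbitrarily small; so any uniform-constant proof must declare what extra input it uses. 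If you do allow yourself \eqref{eq 3.21}--\eqref{eq 3.22}, which are available where the lemma is applied, then your direct route --- $\int_\Omega\rho(\log\rho)_+\,dx\le c$, a de la Vall\'ee Poussin argument giving $V_0=V_0(data)$ with $\int_B\rho\,dx\le M_0/2$, and then $\int_A s\,dx\le k e^{-1}|\Omega|+\tfrac34 kM_0(\log\underline{\theta}+1)\to-\infty$ as $\underline{\theta}\downarrow 0$ --- is a legitimate, more elementary alternative to the paper's compactness argument. As written, however, the proposal neither carries out that step nor acknowledges the change of hypotheses, so it is not yet a proof of the lemma as stated.
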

The Lemma and the idea of the proof are a slight modification of a result by Feireisl and Novotn\'y in \cite[Lemma 2.2]{Novotny}.
\begin{proof}
Assume there exist sequences $\rho_n,\theta_n$ satisfying the assumptions and such that
\begin{align*}
&\rho_n\to \rho ~~\text{in } L^1(\Omega),~~\int_\Omega \rho d x=M_0,~~\big|\{x\in \Omega| ~\theta_n(x)>\frac{1}{n}\}\big|<\frac{1}{n}.
\end{align*}
Then $\theta_n\to 0$ in $L^1(\Omega)$.
Moreover, by the definition of the entropy we have
\begin{align*}
|s(\rho_n,\theta_n)|\leq \rho_n(1+|\log\rho_n|+|\log \theta_n|).
\end{align*}
In the non-degenerate region we obtain
\begin{align*}
\int_{\{\rho_n\leq \overline{Z}\theta\}} s(\rho_n,\theta_n) d x&\leq c \int _{\{\rho_n\leq \overline{Z}\theta\}} \rho_n(1+|\log \rho_n|+|\log \theta_n|) d x\\
&\leq c(\overline{Z})\int_\Omega \theta_n(1+|\log \rho_n|+|\log \theta_n|) d x\to 0
\end{align*}
Thus,
\begin{align*}
\limsup_{n\to \infty} \int_{\{\rho_n\leq \overline{Z}\theta\}} s(\rho_n,\theta_n) d x\leq 0.
\end{align*}
Moreover, in the degenerate region we have
\begin{align*}
\int_{\{\rho_n>\overline{Z}\theta_n\}}s(\rho_n,\theta_n) d x= \int_{\{Z\theta_n\geq\rho_n>\overline{Z}\theta_n\}}s(\rho_n,\theta_n) d x +\int_{\{\rho_n>Z\theta_n\}}s(\rho_n,\theta_n) d x,
\end{align*}
where
\begin{align*}
 \int_{\{Z\theta_n\geq\rho_n>\overline{Z}\theta_n\}}s(\rho_n,\theta_n) d x \leq S(\overline{Z}) Z\int_\Omega \theta_n d x \to 0.
\end{align*}
Thus we conclude that
\begin{align*}
\liminf_{n\to \infty} \int_{\{\rho_n>Z\theta_n\}}s(\rho_n,\theta_n) d x> M_0s_\infty ~~\text{for any } Z>\overline{Z}.
\end{align*}
However, this leads to a contradiction as
\begin{align*}
\int_{\{\rho_n>Z\theta_n\}}s(\rho_n,\theta_n) d x\leq S(Z)\int_{\{\rho_n>Z\theta_n\}}\rho_n d x \to S(Z)M_0,
\end{align*}
where we used the notation
\begin{align*}
s(\rho,\theta)=\rho S(Z),~~Z=\frac{\rho}{\theta},~~ \lim_{Z\to \infty}S(Z)=s_\infty.
\end{align*}
\end{proof}
Thus we have shown that $\int_V \log\theta \,d x $ is finite.
Next, a version of Poincare's inequality provides the following.

\begin{corollary}[{\cite[Proposition 2.2]{Novotny}}]\label{prop 1}
Let $\Omega\subset \mathbb{R}^n$ with $n=2,\,3$ be a bounded Lipschitz domain. 
Let $V\subset \Omega$ be a measurable set such that $|v|\geq V_0>0$.
Then there exists a constant $c(V_0)$ such that
\begin{align*}
\|v\|_{H^1(\Omega)}\leq c(V_0)\bigg(\|\nabla v\|_{L^2(\Omega)}+\int_V |v|dx\bigg).
\end{align*}
\end{corollary}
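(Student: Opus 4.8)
The plan is to argue by contradiction, using compactness via the Rellich--Kondrachov theorem. Suppose the claimed inequality fails for the given $V_0$. Then for every $n\in\mathbb{N}$ there is a measurable set $V_n\subset\Omega$ with $|V_n|\geq V_0$ and a function $v_n\in H^1(\Omega)$ such that
\begin{align*}
\|v_n\|_{H^1(\Omega)}=1,\qquad \|\nabla v_n\|_{L^2(\Omega)}+\int_{V_n}|v_n|\,dx<\frac{1}{n}.
\end{align*}
Normalizing $\|v_n\|_{H^1(\Omega)}=1$ is legitimate because the inequality is positively homogeneous. In particular $\|\nabla v_n\|_{L^2(\Omega)}\to 0$ and $\int_{V_n}|v_n|\,dx\to 0$, while $(v_n)$ stays bounded in $H^1(\Omega)$.

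Next I would invoke compactness. Since $\Omega$ is a bounded Lipschitz domain, the embedding $H^1(\Omega)\hookrightarrow L^2(\Omega)$ is compact, so along a subsequence (not relabeled) $v_n\to v$ strongly in $L^2(\Omega)$ and $v_n\rightharpoonup v$ weakly in $H^1(\Omega)$. By weak lower semicontinuity of $v\mapsto\|\nabla v\|_{L^2(\Omega)}$ we get $\|\nabla v\|_{L^2(\Omega)}\leq\liminf_n\|\nabla v_n\|_{L^2(\Omega)}=0$, hence $\nabla v=0$ a.e.; as $\Omega$ is connected, $v\equiv c$ for some constant $c$. Moreover $\|v_n\|_{L^2(\Omega)}^2=\|v_n\|_{H^1(\Omega)}^2-\|\nabla v_n\|_{L^2(\Omega)}^2\to 1$, and strong $L^2$-convergence gives $\|v\|_{L^2(\Omega)}^2=1$, so $c^2|\Omega|=1$ and in particular $c\neq 0$.

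Finally I derive the contradiction from the term $\int_{V_n}|v_n|\,dx$. Using $|V_n|\geq V_0$, the Cauchy--Schwarz inequality, and the strong $L^2(\Omega)$-convergence,
\begin{align*}
\int_{V_n}|v_n|\,dx\geq\int_{V_n}|v|\,dx-\int_{V_n}|v_n-v|\,dx\geq |c|\,V_0-|\Omega|^{1/2}\|v_n-v\|_{L^2(\Omega)}\longrightarrow |c|\,V_0>0,
\end{align*}
contradicting $\int_{V_n}|v_n|\,dx\to 0$. This establishes the inequality, and since $V_n$ ranged over arbitrary sets of measure at least $V_0$, the resulting constant depends only on $V_0$ and $\Omega$.

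The only subtle point is keeping the constant uniform over all admissible $V$: one must allow the sets $V_n$ to vary along the sequence rather than fixing a single $V$, and then bound $\int_{V_n}|v_n-v|\,dx$ uniformly in $n$ by $|\Omega|^{1/2}\|v_n-v\|_{L^2(\Omega)}$, which is exactly what strong $L^2(\Omega)$-convergence provides. Apart from that, the argument only uses that $\Omega$ is connected (so $\nabla v=0$ forces $v$ constant) and that Rellich--Kondrachov applies on bounded Lipschitz domains.
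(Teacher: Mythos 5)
Your proof is correct, and it handles the two genuine subtleties properly: reading the hypothesis as $|V|\geq V_0$ and letting the sets $V_n$ vary along the contradicting sequence, with the error term $\int_{V_n}|v_n-v|\,dx$ controlled uniformly by $|\Omega|^{1/2}\|v_n-v\|_{L^2(\Omega)}$. The paper gives no proof of this corollary itself but cites Proposition 2.2 of Feireisl--Novotn\'y, and the contradiction argument via Rellich--Kondrachov compactness that you present is essentially the standard proof of that cited result, so your write-up matches the intended approach rather than offering a different route.
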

Applying this result to $v=\log \theta$ we obtain the following estimate for the temperature.
\begin{align}
\int_0^T\int_\Omega |\log \theta|^2+|\nabla\log \theta|^2 dxdt\leq c(data).
\end{align}
This implies the \textbf{positivity of the absolute temperature} with a possible exception on a set of measure zero.

\section{Existence Theory}
\label{existence theory}
The ultimate goal of the forthcoming chapter is to show existence of weak solutions to the system of equations \eqref{eq 3.1}-\eqref{eq 3.19}.
\begin{theorem}[Local-in-time Existence]\label{theorem 2}

Let $\Omega\subset \mathbb{R}^n$ with $n=2,\,3$ be a bounded domain of class $C^{2,\nu}$, $\nu\in (0,1)$.
Assume that the data satisfies the initial conditions
\begin{align*}
&\rho_0\in L^p(\Omega) ~~\text{for some } p\geq 3,~~\int_\Omega \rho_0dx =M_0>0,\\
&E_0=\int_\Omega\rho_0\theta_0 dx <\infty,~~s(\rho_0,\theta_0)\in L^1(\Omega).
\end{align*}
In addition, let the initial density and temperature be positive, i.e.
$\rho_0(x)>0$ and $\theta_0(x)>0$ for all $x\in \Omega$.
Then there exists a time $T>0$ such that the system of equations for the thermal effects of an ideal gas in the Brinkman model admits a weak solution $(\rho,u,\theta)$ on $(0,T)\times\Omega)$ in the sense specified before, i.e. $(\rho,u,\theta)$ satisfy relations \eqref{eq 3.1}-\eqref{eq 3.19}.
\end{theorem}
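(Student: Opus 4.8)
The plan is to realize $(\rho,u,\theta)$ as the limit of a regularized, decoupled approximation scheme in the spirit of \cite{Feireisl2} and \cite{Novotny}, exploiting the fact that the Brinkman equation carries no time derivative, so that $u$ is an elliptic function of the instantaneous state $(\rho,\theta)$. On a short time interval I would: (i) replace the continuity equation by the parabolic regularization $\partial_t\rho+\dv(\rho u)=\varepsilon\Delta\rho$ with homogeneous Neumann data, which preserves total mass and keeps $\rho$ strictly positive and bounded when $\rho_0>0$; (ii) rewrite the entropy balance as the equivalent internal-energy balance, cast as a quasilinear parabolic equation for $\theta$, with an added $\varepsilon$-dissipation and the singular factor $1/\theta$ truncated at level $1/\delta$; and (iii) keep $\nabla(k\rho\theta)=\mu\Delta u-\nu\rho u$, $u|_{\partial\Omega}=0$, which on the $C^{2,\nu}$ domain is an elliptic problem with a nonnegative zeroth-order coefficient, solvable by Lax--Milgram and bootstrapped by Agmon--Douglis--Nirenberg theory to $\|u\|_{W^{2,q}}\leq C(\|\rho\theta\|_{L^q}+\|\rho u\|_{L^q})$. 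For fixed $(\varepsilon,\delta)$ the approximate system is then solved by a Schauder fixed point along its triangular structure $\theta\mapsto u\mapsto\rho\mapsto\theta$, the required compactness of the solution map coming from Aubin--Lions.

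Next I would establish the a priori bounds that are uniform in $(\varepsilon,\delta)$. Conservation of mass \eqref{eq 3.20} is immediate from the Neumann condition; testing the internal-energy balance by $1$ gives the energy bound \eqref{eq 3.21}, so $\rho\theta$ is bounded in $L^\infty((0,T);L^1(\Omega))$; the entropy inequality yields \eqref{eq 3.22} together with a uniform bound on $\int_0^T\int_\Omega\sigma\,dx\,dt$, hence uniform $L^1_{t,x}$ control of $\mu|\nabla u|^2/\theta$, $\nu\rho|u|^2/\theta$ and $\kappa|\nabla\log\theta|^2$. Finiteness of the total entropy, together with Lemma \ref{lemma 3} and the Poincar\'e-type inequality of Corollary \ref{prop 1} applied to $v=\log\theta$, gives a uniform bound of $\log\theta$ in $L^2((0,T);H^1(\Omega))$, hence positivity of $\theta$ a.e.\ and a positive-power bound of $\theta$ in some $L^q_{t,x}$, $q>1$. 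Feeding $\rho\theta\in L^\infty_tL^1_x$ into the Brinkman elliptic estimate controls $u$ in $L^r_tW^{1,r}_x$, and the initial bound $\rho_0\in L^p$ is then propagated through the renormalized continuity equation by a Gr\"onwall estimate for $\|\rho\|_{L^p}^p$ along $u$, whose constant degenerates in finite time; this is exactly the mechanism that restricts the construction to a finite $T=T(\text{data})$. Once $\rho\in L^\infty((0,T);L^p(\Omega))$ is in hand one bootstraps to improve $u$, $\theta$ and $\sigma$ to the regularity demanded by the weak formulation.

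Finally I would pass to the limit $\varepsilon\to0$, $\delta\to\infty$. The uniform $W^{1,r}_x$ bound on $u$ with an $\varepsilon$-uniform estimate of $\partial_t\rho$ from the continuity equation give strong convergence of $u$ and, via the renormalized continuity equation (to exclude oscillations), of $\rho$; hence $\rho u$ and the pressure $k\rho\theta$ pass to the limit since one factor converges strongly and the other weakly. In the entropy equation the convective term $su$ and the nonlinear entropy $s(\rho,\theta)=-k\rho(\log\rho-\frac{3}{2}(\log\theta+1))$ are treated with the div-curl lemma together with a.e.\ convergence of $\theta$; the flux $\kappa\nabla\log\theta$ converges weakly in $L^2_{t,x}$ and its $L^1$ limit is identified by a Dunford--Pettis / de la Vall\'ee-Poussin equi-integrability argument; the production term is recovered only as the inequality $\sigma\leq\liminf_\varepsilon\sigma_\varepsilon$, by weak lower semicontinuity of the convex functionals $u\mapsto\int|\nabla u|^2/\theta$, $u\mapsto\int\rho|u|^2/\theta$ and $\theta\mapsto\int|\nabla\log\theta|^2$, which is consistent with the ``$\geq$'' in the weak formulation. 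This, with conservation of mass and energy, produces a weak solution satisfying \eqref{eq 3.1}--\eqref{eq 3.19} on $(0,T)\times\Omega$.

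I expect the principal obstacle to be the compactness of the temperature. One needs enough compactness of $\theta$, equivalently of $\log\theta$, both to pass to the limit in the non-convex entropy $s(\rho,\theta)$ and in the singular coefficients $1/\theta$, and to ensure the limiting entropy production dominates the right-hand side pointwise. Here the uniform $L^2_tH^1_x$ bound on $\log\theta$ must be combined with a time-regularity estimate of $s$ read off from the entropy equation, so that Aubin--Lions applies to $s$ and the monotone relation $s\leftrightarrow\theta$ at fixed $\rho$ can be inverted to yield a.e.\ convergence of $\theta$; the degeneracy of this relation as $\theta\to0$ or $\rho\to0$ is the delicate point, and it is precisely the positive-measure lower bound on $\theta$ from Lemma \ref{lemma 3} and Corollary \ref{prop 1} that keeps it under control. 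Orchestrating this alongside the div-curl lemma, which must handle all the bilinear entropy terms simultaneously, is the technical heart of the argument.
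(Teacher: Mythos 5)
Your overall architecture matches the paper's: parabolic $\varepsilon$-regularization of the continuity equation, replacement of the entropy balance by a regularized internal-energy equation solved as a quasilinear parabolic problem for $\theta$, a fixed-point argument exploiting the triangular structure, uniform bounds from the dissipation/entropy inequality, positivity of $\theta$ via Lemma \ref{lemma 3} and Corollary \ref{prop 1}, the div-curl lemma plus monotonicity of $s$ in $\theta$ for a.e.\ convergence of the temperature, weak lower semicontinuity for the production measure, and a short-time density estimate as the source of the finite $T$. (The paper additionally runs a Galerkin level $X_n$ for the Brinkman equation and uses the specific terms $\delta/\theta^2$, $-\delta\theta^5$ rather than a truncation of $1/\theta$, but those are inessential variants; also note your ``$\delta\to\infty$'' should be the removal of the truncation.)

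The genuine gap is the strong convergence of the density. You dispose of it in one clause — ``via the renormalized continuity equation (to exclude oscillations)'' — but in a compressible system with pressure $p=\rho\theta$ the renormalized equation alone does not exclude oscillations: it only gives a transport identity for $\overline{\rho\log\rho}$, and comparing it with $\rho\log\rho$ requires knowing that $\overline{\rho\,\dv u}\geq\rho\,\dv u$. That is exactly the effective-pressure weak compactness identity, which the paper proves by testing the Brinkman equation \eqref{eq 4.59} and its limit with $\varphi=\xi\nabla\Delta^{-1}[\mathbb{1}_\Omega\rho_\epsilon]$ (cf. \eqref{eq test func}), using the double Riesz transform, the Coifman--Meyer commutator estimate and a further application of the div-curl lemma to obtain $\overline{\rho\theta\,\rho}-\mu\overline{\rho\,\dv u}=\overline{\rho\theta}\,\rho-\mu\rho\,\dv u$, and only then invoking DiPerna--Lions renormalization to conclude $\overline{\rho\log\rho}=\rho\log\rho$ and hence $\rho_\epsilon\to\rho$ a.e.\ (equation \eqref{eq strong rho}); the same machinery is reused in the $\delta$-limit. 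Without this step you cannot identify $\overline{\rho\theta}$ in the Brinkman equation nor $\overline{s(\rho,\theta)}=s(\rho,\theta)$ in the entropy inequality, so the limit passage as you describe it does not close. A secondary imprecision: the local-in-time restriction cannot come from a ``Gr\"onwall estimate whose constant degenerates'' — a linear Gr\"onwall bound never degenerates in finite time. In the paper the finite $T$ arises from a genuinely nonlinear differential inequality, $\frac{d}{dt}\|\rho_\delta\|_{L^3}^3\leq c_1\big(\|\rho_\delta\|_{L^3}^3\big)^3+c_2\|\nabla u_\delta\|_{L^{3/2}}^{3/2}$, obtained by testing the continuity equation with $\rho^2$ and using an inverse Jensen inequality, combined with the reverse Young (Specht-ratio) control of $\rho^2+\theta^2$ by $\rho\theta$ in the energy balance; you should make this feedback loop ($u$ controlled by $\rho\theta$, which is controlled by $\|\rho\|_{L^3}$ and $\|\theta\|_{L^6}$) explicit, since it is what produces the finite existence time.
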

The proof of the main result consists of several steps outlined as follows:
\begin{itemize}
\item The continuity equation is regularized with an artificial viscosity term and the entropy production equation is replaced by the balance of internal energy.
\item Approximate solutions are obtained by a fixed point method.
\item Performing the first limit we convert the balance of internal energy to an approximate entropy production equation containing an additional small parameter.
\item We pass to the limit in the regularized continuity equation and finally pass to the limit in the entropy production equation.
\end{itemize}

\begin{remark}\label{remark 9}
When the energy law also contains a kinetic part the Brinkman-type equation with additional inertial term becomes the compressible Navier-Stokes-Fourier system for which the ideal gas case is still open \cite{Novotny}.
\end{remark}

\begin{remark}
In the following subsections we set the parameter $k$ to equal one.
This allows to simplify the computations without changing the main theorem.
\end{remark}
\subsection{Approximate scheme}
\label{approx}
The first step in the proof of Theorem \ref{theorem 2} is to regularize the equations \eqref{eq 3.1}-\eqref{eq 3.19}.\\

The \textbf{continuity equation} is regularized by means of an artificial viscosity term
\begin{align}\label{eq 4.1}
\partial_t\rho + \dv(\rho u)=\epsilon\Delta \rho~~\text{in } (0,T)\times\Omega,
\end{align}
with homogeneous Neumann boundary condition
\begin{align}\label{eq 4.2}
\nabla \rho \cdot n=0 ~~\text{on } (0,T)\times \partial\Omega,
\end{align}
and the initial condition
\begin{align}\label{eq 4.3}
\rho(0,\cdot)=\rho_{0,\delta},
\end{align}
where
\begin{align}\label{eq 4.4}
\rho_{0,\delta}\in C^{2,\nu}(\overline{\Omega}),~~\inf_{x\in \Omega}\rho_{0,\delta}>0,~~\nabla \rho_{0,\delta} \cdot n|_{\partial\Omega}=0.
\end{align}
The \textbf{Brinkman-type equation}
\begin{align}\label{eq 4.5}
\int_\Omega \rho \theta \dv \phi dx= \int_\Omega \mu \nabla u :\nabla \phi dx +\int_\Omega \nu \rho u \phi dx,
\end{align}
for any test function $\phi \in X_n$, where
\begin{align}\label{eq 4.6}
X_n\subset C^{2,\nu}(\Omega)\subset L^2(\Omega)
\end{align}
is a finite dimensional vector space endowed with the Hilbert structure of the Lebesgue space $L^2$ and the functions satisfy
\begin{align}\label{eq 4.7}
\phi =0 ~~\text{on } \partial\Omega~~\text{ no-slip boundary conditions.}
\end{align}
Instead of the entropy production, we consider a modified \textbf{internal energy} equation of the form
\begin{align}\label{eq 4.8}\begin{split}
\partial_t e_\delta+\dv(e_\delta u)-\kappa \Delta \theta &= \mu |\nabla u|^2+ \nu \rho |u|^2 -\rho\theta \dv u+\delta\frac{1}{\theta^2}\\
&+\epsilon\delta(\rho^\Gamma+2)|\nabla \rho|^2-\delta \theta^5~~\text{in } (0,T)\times\Omega,\end{split}
\end{align}
with the Neumann boundary condition
\begin{align}\label{eq 4.9}
\nabla \theta\cdot n=0~~\text{on } (0,T)\times\partial\Omega,
\end{align}
and the initial condition
\begin{align}\label{eq 4.10}
\theta(0,\cdot)=\theta_{0,\delta},
\end{align}
where
\begin{align}\label{eq 4.11}
\theta_{0,\delta}\in H^1(\Omega)\cap L^\infty(\Omega),~~ess\inf_{x\in \Omega} \theta_{0,\delta}>0.
\end{align}
Here
\begin{align}\label{eq 4.12}
e_\delta(\rho,\theta)=\rho\theta.
\end{align}
Moreover, the approximate \textbf{internal energy balance} holds
\begin{align}\label{eq 4.13}
\int_\Omega \rho\theta(t)d x=\int_\Omega \rho_{0,\delta}\theta_{0,\delta} d x+\int_0^t\int_\Omega \frac{\delta}{\theta^2}-\delta\theta^5\,d x d \tau.
\end{align}
for all $t\in [0,T]$.\\
The quantities $\epsilon$ and $\delta$ are small positive parameters, yielding better estimates of the approximate scheme.

\subsection{Solvability of the approximate system}
\label{solv approx}
The second step is to show existence of classical solutions to the approximate system.\\

\begin{theorem}[Global existence for the approximate system]\label{theorem 3}
~\\
Let $\epsilon$, $\delta$ be given positive parameters. Under the hypotheses of Theorem \ref{theorem 2} there exists a $\Gamma_0>0$ such that for all $\Gamma>\Gamma_0$ the approximate system \eqref{eq 4.1}-\eqref{eq 4.13} admits a strong solution $(\rho,u,\theta)$ belonging to the following regularity class
\begin{align*}
&\rho\in C([0,T];C^{2,\nu}(\overline{\Omega})),~~\partial_t\rho \in C([0,T];C^{0,\nu}(\overline{\Omega})),~~\inf_{[0,T]\times\overline{\Omega}}\rho>0,\\
&u\in C^1(X_n),\\
&\theta\in C([0,T];H^2(\Omega))\cap L^\infty((0,T)\times\Omega),~~\partial_t\theta \in L^2((0,T)\times\Omega),~~\essinf_{(0,T)\times\Omega}\theta>0.
\end{align*}
\end{theorem}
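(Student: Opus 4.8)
The plan is to prove Theorem~\ref{theorem 3} by a fixed-point argument on a short time interval, followed by continuation using the uniform-in-time estimates coming from the energy balance \eqref{eq 4.13} and the positivity mechanism of Lemma~\ref{lemma 3} and Corollary~\ref{prop 1}. The natural strategy, following \cite{Feireisl2} and \cite{Novotny}, is to decouple the three equations and iterate. First I would fix $u$ in the finite-dimensional space $X_n$ (so $u$ is a smooth vector field with coefficients depending on $t$) and solve the regularized parabolic continuity equation \eqref{eq 4.1}--\eqref{eq 4.4}: since the equation is linear in $\rho$ with smooth coefficients, standard parabolic theory in Hölder spaces gives a unique solution $\rho\in C([0,T];C^{2,\nu}(\overline\Omega))$ with $\partial_t\rho\in C([0,T];C^{0,\nu}(\overline\Omega))$, and the maximum principle, together with the Neumann condition, preserves strict positivity $\inf\rho>0$ (with a lower bound that may degrade in time but stays positive on $[0,T]$ for $T$ small). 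Next, with $\rho$ and $u$ frozen, I would solve the parabolic equation \eqref{eq 4.8}--\eqref{eq 4.11} for $\theta$: here the $-\delta\theta^5$ term provides the coercive absorption needed for an $L^\infty$ bound from above, while the $+\delta/\theta^2$ term acts as a barrier preventing $\theta$ from reaching zero, so a truncation/maximum-principle argument yields $\theta\in C([0,T];H^2(\Omega))\cap L^\infty$, $\partial_t\theta\in L^2$, and $\essinf\theta>0$. Finally, given $(\rho,\theta)$, the Brinkman equation \eqref{eq 4.5} is, for each fixed $t$, a linear elliptic problem on the finite-dimensional space $X_n$: the bilinear form $\phi\mapsto\int_\Omega \mu\nabla u:\nabla\phi+\nu\rho u\cdot\phi$ is coercive on $X_n$ (using $\rho>0$ and the Poincaré inequality since $\phi=0$ on $\partial\Omega$), so $u(t)$ is uniquely determined by $\rho(t)\theta(t)$ and depends smoothly on $t$, giving $u\in C^1(X_n)$.

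Having set up the three solution maps, I would compose them into a single map $\mathcal{T}:u\mapsto u$ on a closed bounded ball in $C([0,\tau];X_n)$ for a suitably small $\tau$, and verify that $\mathcal{T}$ is a contraction (or at least continuous and compact, so that Schauder applies). The estimates needed are: continuous dependence of $\rho$ on $u$ in $C^{2,\nu}$ (parabolic Schauder estimates, linear in the data), continuous dependence of $\theta$ on $(\rho,u)$ (energy estimates plus the $H^2$ bound, using that all nonlinear terms in \eqref{eq 4.8} are locally Lipschitz once $\rho,\theta$ are bounded away from $0$ and $\infty$), and continuous dependence of $u$ on $\rho\theta$ (finite-dimensional linear algebra with a coercivity constant controlled by $\inf\rho$). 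On a short enough interval the Lipschitz constants multiply to something $<1$, giving a unique local solution; smallness of $\tau$ here depends only on the size of the data and on $\epsilon,\delta,n$, not on $\Gamma$.

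To upgrade the local solution to a global one on the fixed $[0,T]$ from Theorem~\ref{theorem 2}, I would derive a priori bounds that do not deteriorate. The internal energy balance \eqref{eq 4.13} controls $\int_\Omega\rho\theta$ uniformly (the extra terms $\delta/\theta^2-\delta\theta^5$ are integrable given the $\theta$-bounds just established and actually help, being of fixed sign on the relevant ranges), and the dissipation identity — the approximate analogue of \eqref{eq 2.16} obtained by multiplying the Brinkman equation by $u$ — gives a uniform bound on $\int_0^T\|\nabla u\|_{L^2}^2$. The entropy estimate, via Lemma~\ref{lemma 3} and Corollary~\ref{prop 1}, gives $\log\theta\in L^2(0,T;H^1(\Omega))$ and hence keeps $\theta$ from collapsing to zero on a large set; combined with the artificial $\delta/\theta^2$ barrier this yields a uniform pointwise lower bound $\essinf\theta\ge\underline\theta(\epsilon,\delta,\text{data})>0$ on all of $[0,T]$. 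Similarly the $-\delta\theta^5$ term yields a uniform upper bound on $\|\theta\|_\infty$, and then parabolic regularity bootstraps to the uniform $C([0,T];H^2)$ and $C([0,T];C^{2,\nu})$ bounds. These uniform bounds let the local solution be continued step by step until the whole interval $[0,T]$ is exhausted.

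The main obstacle I expect is the simultaneous control of the positivity (lower bound) and boundedness (upper bound) of $\theta$ uniformly on $[0,T]$, because the nonlinear production terms in \eqref{eq 4.8} — especially $\mu|\nabla u|^2/1$, $\nu\rho|u|^2$ and the $\epsilon\delta(\rho^\Gamma+2)|\nabla\rho|^2$ term — are not bounded a priori in terms of $\theta$ alone, so one must carefully balance them against the $-\delta\theta^5$ absorption and the $+\delta/\theta^2$ barrier while closing the loop with the $\rho$- and $u$-estimates, which themselves depend on $\theta$ through the Brinkman equation. This is exactly where the parameter $\Gamma$ and its threshold $\Gamma_0$ enter: choosing $\Gamma$ large forces enough integrability of $\nabla\rho$ (via the $\rho$-renormalized continuity estimate, testing \eqref{eq 4.1} with $\rho^{\Gamma-1}$) so that the extra dissipative term in \eqref{eq 4.8} is controlled and does not destroy the $L^\infty$ bound on $\theta$. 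Handling this coupling — rather than the routine parabolic and elliptic existence arguments — is the technical heart of the proof, and it is precisely the point where I would follow the constructions in \cite{Feireisl2} and \cite[Chapter~3]{Novotny} most closely.
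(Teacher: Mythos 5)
Your proposal follows essentially the same route as the paper: decouple into the parabolic continuity equation for $\rho$ given $u\in X_n$ (standard parabolic theory with positivity), the quasilinear parabolic problem for $\theta$ given $(\rho,u)$ with the $\delta/\theta^2$ barrier and $-\delta\theta^5$ absorption giving two-sided bounds via a comparison principle, and the finite-dimensional Brinkman problem determining $u$ from $(\rho,\theta)$, all closed by a fixed-point argument (the paper invokes Leray--Schauder, i.e.\ your continuity-plus-compactness alternative) and continued to $[0,T]$ via the uniform estimates. The only minor divergence is your reading of the threshold $\Gamma_0$: at this approximate level the paper controls the term $\epsilon\delta(\rho^\Gamma+2)|\nabla\rho|^2$ simply through $\|\rho\|_{C^1}$ and $\|u\|_{X_n}$, the largeness of $\Gamma$ being needed mainly for the integrability bounds used in the subsequent limit passages rather than for the $L^\infty$ bound on $\theta$ here.
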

The idea of the proof follows standard arguments:
\begin{itemize}
\item Given the velocity field $u$, the approximate continuity equation is solved directly by standard parabolic theory;
\item After solving the continuity equation we determine the temperature $\theta$ of the quasilinear parabolic problem, i.e. the internal energy equation, where $u,~\rho$ play the role of given data;
\item To close the loop, the solution $u$ is looked for as the fixed point of an integro-differential operator. 
\end{itemize}
\begin{lemma}[{\cite[Lemma 3.1]{Novotny}}]\label{lemma 4}
Let $\Omega\subset\mathbb{R}^n$ where $n=2,\,3$ be a bounded domain of class $C^{2,\nu},~\nu\in (0,1)$ and let $u\in X_n$ be a given vector field.
Suppose the initial data has the regularity specified in Section 4.1.\\
Then the continuity equation posses a unique classical solution $\rho=\rho_u$, more specifically
\begin{align}\label{eq 4.14}
\rho_u\in V\equiv \big\{\rho\in C([0,T];C^{2,\nu}(\overline{\Omega})),~\partial_t\rho \in C([0,T];C^{0,\nu}(\overline{\Omega}))\big\}.
\end{align}
Moreover, the mapping $u\in X_n\to \rho_u$ maps bounded sets in $X_n$ into bounded sets in $V$ and is continuous with values in $C^1([0,T]\times \overline{\Omega})$.
Finally,
\begin{align}\label{eq 4.15}
\underline{\rho_0}\exp\bigg(-\int_0^\tau \|\dv u\|_{L^\infty(\Omega)} dt\bigg)\leq \rho_u(\tau,x)\leq \overline{\rho_0}\exp\bigg(+\int_0^\tau \|\dv u\|_{L^\infty(\Omega)} dt\bigg),
\end{align}
for all $\tau \in [0,T],~x\in \Omega$, where $\underline{\rho_0}=\inf_{x\in \Omega} \rho_{0,\delta},~\overline{\rho_0}=\sup_{x\in \Omega} \rho_{0,\delta}$.
\end{lemma}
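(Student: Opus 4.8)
The plan is to treat \eqref{eq 4.1}--\eqref{eq 4.4} as a \emph{linear} second-order parabolic initial-boundary value problem for $\rho$, with the given field $u$ frozen as a coefficient. Writing the equation in non-divergence form,
\begin{align*}
\partial_t\rho-\epsilon\Delta\rho+u\cdot\nabla\rho+(\dv u)\,\rho=0\ \ \text{in }(0,T)\times\Omega,\\
\nabla\rho\cdot n=0\ \ \text{on }(0,T)\times\partial\Omega,\qquad \rho(0,\cdot)=\rho_{0,\delta},
\end{align*}
we have a uniformly parabolic operator (parabolicity constant $\epsilon$) whose coefficients inherit the smoothness of $u$ and in particular are H\"older continuous on $\overline\Omega$. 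Since $\partial\Omega$ is of class $C^{2,\nu}$, the datum $\rho_{0,\delta}$ lies in $C^{2,\nu}(\overline\Omega)$, and the zeroth-order compatibility condition $\nabla\rho_{0,\delta}\cdot n=0$ on $\partial\Omega$ is built into \eqref{eq 4.4}, the classical Schauder theory for the conormal (Neumann) problem — e.g. in the spirit of Ladyzhenskaya--Solonnikov--Ural'tseva — yields a unique classical solution $\rho=\rho_u$ lying in the parabolic H\"older class $C^{1+\nu/2,\,2+\nu}([0,T]\times\overline\Omega)$; in particular $\rho_u\in V$ as in \eqref{eq 4.14}. Uniqueness also follows directly from linearity together with a standard $L^2$ energy estimate.

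Next I would establish the pointwise bounds \eqref{eq 4.15} by a comparison argument. Set $\overline\varphi(t)=\overline{\rho_0}\exp\big(\int_0^t\|\dv u\|_{L^\infty(\Omega)}\,ds\big)$ and $\underline\varphi(t)=\underline{\rho_0}\exp\big(-\int_0^t\|\dv u\|_{L^\infty(\Omega)}\,ds\big)$. Being spatially constant, $\overline\varphi$ satisfies $\partial_t\overline\varphi-\epsilon\Delta\overline\varphi+u\cdot\nabla\overline\varphi+(\dv u)\overline\varphi=\big(\|\dv u\|_{L^\infty}+\dv u\big)\overline\varphi\ge0$, with $\overline\varphi(0)=\overline{\rho_0}\ge\rho_{0,\delta}$ and $\nabla\overline\varphi\cdot n=0$; likewise $\underline\varphi$ is a subsolution with $\underline\varphi(0)=\underline{\rho_0}\le\rho_{0,\delta}$. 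The comparison principle for the linear parabolic operator (after the usual exponential shift absorbing the zeroth-order coefficient $\dv u$) then sandwiches $\underline\varphi\le\rho_u\le\overline\varphi$, which in particular gives $\inf_{[0,T]\times\overline\Omega}\rho_u>0$. Equivalently, one may read these barriers off the transport identity $\frac{d}{dt}\rho(t,X(t))=-\rho\,\dv u$ along the characteristics of $u$, the diffusion term only enforcing the same envelope.

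It remains to analyze the solution map $u\mapsto\rho_u$. For the boundedness claim, the Schauder estimate applied to the equation above gives $\|\rho_u\|_{C^{1+\nu/2,2+\nu}([0,T]\times\overline\Omega)}\le C\big(\epsilon,T,\Omega,\|\rho_{0,\delta}\|_{C^{2,\nu}},\|u\|_{C^{1,\nu}(\overline\Omega)}\big)$ with $C$ monotone in the last argument; since all norms on the finite-dimensional space $X_n$ are equivalent, a bound $\|u\|_{X_n}\le R$ yields $\|\rho_u\|_V\le C(R)$. For continuity, given $u_1,u_2\in X_n$ the difference $w=\rho_{u_1}-\rho_{u_2}$ solves the same type of linear problem with zero initial data, zero conormal data, and source $-(u_1-u_2)\cdot\nabla\rho_{u_2}-\big(\dv(u_1-u_2)\big)\rho_{u_2}$; by the boundedness step this source is bounded in $C^{\nu/2,\nu}$ by $c\,\|u_1-u_2\|_{X_n}$, so the Schauder estimate gives $\|w\|_{C^{1+\nu/2,2+\nu}}\le c\,\|u_1-u_2\|_{X_n}$, and the embedding $C^{1+\nu/2,2+\nu}([0,T]\times\overline\Omega)\hookrightarrow C^1([0,T]\times\overline\Omega)$ concludes.

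The argument is entirely classical; the only point requiring care is the bookkeeping in the last paragraph — tracking how the parabolic (Schauder, or $L^2$-energy) constants depend on the coefficient $u$ — so that one obtains \emph{simultaneously} boundedness of $\rho_u$ on bounded subsets of $X_n$ and Lipschitz, hence continuous, dependence of $\rho_u$ on $u$ in the $C^1([0,T]\times\overline\Omega)$ topology. I would regard that, rather than the existence and regularity statement itself, as the main thing to get right.
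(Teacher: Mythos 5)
Your proposal is correct and follows essentially the same route as the paper, which does not prove this lemma itself but cites \cite[Lemma 3.1]{Novotny}, where the argument is exactly this classical one: linear parabolic (Schauder/maximal regularity) theory for the Neumann problem with the frozen coefficient $u$, a comparison-principle sandwich with the spatially constant barriers $\underline{\rho_0}e^{-\int_0^t\|\dv u\|_{L^\infty}}$ and $\overline{\rho_0}e^{+\int_0^t\|\dv u\|_{L^\infty}}$, and the difference-equation estimate (using equivalence of norms on the finite-dimensional $X_n$) for boundedness and continuity of $u\mapsto\rho_u$. Your treatment of the compatibility condition $\nabla\rho_{0,\delta}\cdot n=0$ and the dependence of the Schauder constants on $\|u\|_{X_n}$ is precisely the bookkeeping the cited proof relies on, so no further changes are needed.
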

In this part we focus on the quasilinear parabolic problem for the unknown temperature $\theta$.
First, we state a comparison principle.
\begin{lemma}[{\cite[Lemma 3.2]{Novotny}}]\label{lemma 5}
Given the quantities $u,\,\rho_u$ satisfying the regularity $u\in X_n$, $\rho\in C([0,T];C^2(\overline{\Omega}))$, $\partial_t\rho \in C([0,T]\times\overline{\Omega})$, where $\inf_{(0,T)\times\Omega}\rho>0$, and assume that $\underline{\theta}$ and $\overline{\theta}$ are a sub- and super-solution to problem belonging to the regularity class
\begin{align}\label{eq 4.16}
&\underline{\theta},\,\overline{\theta}\in L^2(0,T;H^2(\Omega)),~\partial_t \underline{\theta},\, \partial_t\overline{\theta}\in L^2((0,T)\times\Omega),\\\label{eq 4.17}
 &0<ess\inf_{(0,T)\times\Omega} \underline{\theta}\leq ess\sup_{(0,T)\times\Omega} \underline{\theta}<\infty,~~ 0<ess\inf_{(0,T)\times\Omega} \overline{\theta}\leq ess\sup_{(0,T)\times\Omega} \overline{\theta}<\infty,
\end{align}
and satisfying 
\begin{align}\label{eq 4.18}
\underline{\theta}(0,\cdot)\leq \overline{\theta}(0,\cdot)~~\text{a.e. in } \Omega.
\end{align}
Then 
\begin{align*}
\underline{\theta}(t,x)\leq \overline{\theta}(t,x) ~~\text{a.e. in } (0,T)\times\Omega.
\end{align*}
\end{lemma}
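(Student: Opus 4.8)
The plan is to run the classical energy-estimate and Gronwall comparison argument, exploiting the favorable monotonicity of the non-classical source terms in \eqref{eq 4.8}. First I would recast the internal-energy equation as a genuine quasilinear parabolic equation for $\theta$ alone. Since $e_\delta=\rho\theta$ and $\rho$ solves the regularized continuity equation \eqref{eq 4.1}, one has
\[
\partial_t e_\delta+\dv(e_\delta u)=\theta\big(\partial_t\rho+\dv(\rho u)\big)+\rho\big(\partial_t\theta+u\cdot\nabla\theta\big)=\epsilon\theta\Delta\rho+\rho\big(\partial_t\theta+u\cdot\nabla\theta\big),
\]
so \eqref{eq 4.8} is equivalent to $\rho(\partial_t\theta+u\cdot\nabla\theta)-\kappa\Delta\theta=R(\rho,u,\theta)$ with
\[
R(\rho,u,\theta)=\mu|\nabla u|^2+\nu\rho|u|^2-\rho\theta\dv u+\delta\theta^{-2}+\epsilon\delta(\rho^\Gamma+2)|\nabla\rho|^2-\delta\theta^5-\epsilon\theta\Delta\rho.
\]
A super-solution $\overline\theta$ satisfies the associated inequality with ``$\geq$'' and a sub-solution $\underline\theta$ with ``$\leq$''; by \eqref{eq 4.16} and \eqref{eq 4.17} both belong to $L^2(0,T;H^2(\Omega))$ with $L^2$ time derivatives and are bounded above and below, so all terms lie in $L^2((0,T)\times\Omega)$ and the two inequalities hold pointwise a.e., together with the Neumann datum \eqref{eq 4.9} and the ordering \eqref{eq 4.18}.

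Set $w=\underline\theta-\overline\theta$. Subtracting the two inequalities gives, a.e.\ in $(0,T)\times\Omega$, $\rho(\partial_t w+u\cdot\nabla w)-\kappa\Delta w\leq R(\rho,u,\underline\theta)-R(\rho,u,\overline\theta)$ with $w(0,\cdot)\leq0$ a.e. I would multiply this by $w^{+}=\max\{w,0\}$ and integrate over $\Omega$; this is legitimate since $w^{+}\in L^2(0,T;H^1(\Omega))$ with $\partial_t(w^{+})\in L^2$, both by the chain rule for the positive part. After integration by parts the diffusion term contributes the nonnegative quantity $\kappa\int_\Omega|\nabla w^{+}|^2\,dx$ (the boundary term vanishing, or having the right sign, under the Neumann condition), which I discard. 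The time-derivative and transport terms combine, via $\partial_t\rho+\dv(\rho u)=\epsilon\Delta\rho$ and $u\cdot n=0$ on $\partial\Omega$, into $\frac{d}{dt}\int_\Omega\tfrac{\rho}{2}(w^{+})^2\,dx-\tfrac{\epsilon}{2}\int_\Omega\Delta\rho\,(w^{+})^2\,dx$, and $\|\Delta\rho(t)\|_{L^\infty(\Omega)}$ is finite by the hypothesis $\rho\in C([0,T];C^2(\overline{\Omega}))$.

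It remains to bound $\int_\Omega\big(R(\rho,u,\underline\theta)-R(\rho,u,\overline\theta)\big)w^{+}\,dx$. The $\theta$-independent contributions $\mu|\nabla u|^2$, $\nu\rho|u|^2$ and $\epsilon\delta(\rho^\Gamma+2)|\nabla\rho|^2$ cancel. The maps $\theta\mapsto\delta\theta^{-2}$ and $\theta\mapsto-\delta\theta^5$ are nonincreasing on $(0,\infty)$, so on $\{w>0\}$, where $\underline\theta>\overline\theta$, their increments are nonpositive and contribute with the favorable sign — this is the crucial structural point. The remaining terms $\theta\mapsto-\rho\theta\dv u$ and $\theta\mapsto-\epsilon\theta\Delta\rho$ are Lipschitz in $\theta$ with constants $\rho\|\dv u\|_{L^\infty(\Omega)}$ (finite because $u\in X_n$ is finite-dimensional) and $\epsilon\|\Delta\rho\|_{L^\infty(\Omega)}$, so their contributions are at most $C(t)\int_\Omega\rho(w^{+})^2\,dx$ after using $\rho\geq\inf_{(0,T)\times\Omega}\rho>0$. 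Collecting everything yields $\frac{d}{dt}\int_\Omega\tfrac{\rho}{2}(w^{+})^2\,dx\leq\Lambda(t)\int_\Omega\tfrac{\rho}{2}(w^{+})^2\,dx$ with $\Lambda\in L^1(0,T)$ depending only on $\|\dv u\|_{L^\infty}$ and $\|\Delta\rho\|_{L^\infty}$; since $\int_\Omega\rho(w^{+})^2\,dx=0$ at $t=0$ by \eqref{eq 4.18}, Gronwall's inequality forces $\int_\Omega\rho(w^{+})^2\,dx=0$ for a.e.\ $t$, hence $w^{+}=0$ a.e., because $\rho$ is strictly positive, which is the assertion.

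The Gronwall mechanism is routine; the real work lies in the bookkeeping around it. I expect the main obstacles to be, first, justifying rigorously that $w^{+}$ is an admissible multiplier in the pointwise (in)equalities satisfied by the sub- and super-solutions — this needs the $H^2$-in-space, $L^2$-in-time regularity \eqref{eq 4.16}, the energy identity for $\int_\Omega\rho(w^{+})^2$ with $\rho$ itself time-dependent, and a careful treatment of the Neumann boundary terms — and, second, the singular term $\delta\theta^{-2}$ and the superlinear term $\delta\theta^5$, which cannot be handled by a global Lipschitz bound: the point is precisely to exploit their monotonicity so that they appear with the right sign, while the uniform bounds $0<\essinf\theta\leq\essup\theta<\infty$ from \eqref{eq 4.17} keep every integral above finite.
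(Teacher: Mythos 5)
Your proof is correct: recasting \eqref{eq 4.8} as the quasilinear equation $\rho(\partial_t\theta+u\cdot\nabla\theta)-\kappa\Delta\theta=R(\rho,u,\theta)$, testing the difference of the sub- and super-solution inequalities with $(\underline{\theta}-\overline{\theta})^{+}$, exploiting the monotone (hence sign-favorable) terms $\delta\theta^{-2}$ and $-\delta\theta^{5}$ together with the Lipschitz terms $-\rho\theta\dv u$ and $-\epsilon\theta\Delta\rho$, and closing with Gronwall is exactly the standard comparison argument behind this lemma. The paper does not reproduce a proof but cites \cite[Lemma 3.2]{Novotny}, and your argument coincides in substance with that reference's proof, so there is nothing genuinely different to compare.
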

\begin{remark}
If we assume in addition that 
\begin{align}\label{eq 4.19}
0<\underline{\theta}_0=ess\inf_{\Omega}\theta_{0,\delta}\leq ess\sup_{\Omega}\theta_{0,\delta}=\overline{\theta}_0<\infty,
\end{align}
the problem \eqref{eq 4.8}-\eqref{eq 4.12} admits at most one solution in the regularity class specified above.
\end{remark}

\begin{corollary} \label{cor 1}
Let $u,\, \rho_u$ be in the regularity class as before and let the initial data $\theta_{0,\delta}$ satisfy \eqref{eq 4.19}.
Suppose that $\theta$ is a strong solution of the problem belonging to the regularity class.

Then there exists two constants $\underline{\theta}$, $\overline{\theta}$ satisfying $0<\underline{\theta}<\underline{\theta}_0<\overline{\theta_0}<\overline{\theta}$ and
\begin{align}\label{eq 4.20}
\underline{\theta}\leq \theta(t,x)\leq \overline{\theta}~~\text{for a.a. } (t,x)\in (0,T)\times\Omega.
\end{align}
\end{corollary}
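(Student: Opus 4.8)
The plan is to use the comparison principle of Lemma~\ref{lemma 5} with \emph{spatially constant} barriers. Since $u\in X_n$ and $\rho=\rho_u$ are fixed functions in the regularity classes of Lemma~\ref{lemma 4}, the quantities $\|u\|_{L^\infty}$, $\|\nabla u\|_{L^\infty}$, $\|\dv u\|_{L^\infty}$, $\|\rho\|_{L^\infty}$, $\|\nabla\rho\|_{L^\infty}$ and $\|\Delta\rho\|_{L^\infty}$ are all finite on $(0,T)\times\Omega$; let $K$ be a common bound. Using the regularized continuity equation \eqref{eq 4.1}, for any constant $c>0$ we have $\partial_t(\rho c)+\dv(\rho c\,u)=c\big(\partial_t\rho+\dv(\rho u)\big)=\epsilon c\,\Delta\rho$, so substituting $\theta\equiv c$ into \eqref{eq 4.8} yields the residual
\begin{align*}
\mathcal{R}(c):=\epsilon c\,\Delta\rho-\mu|\nabla u|^2-\nu\rho|u|^2+\rho c\,\dv u-\frac{\delta}{c^2}-\epsilon\delta(\rho^\Gamma+2)|\nabla\rho|^2+\delta c^5 .
\end{align*}
By definition, a constant $c$ is a supersolution of \eqref{eq 4.8}--\eqref{eq 4.12} when $\mathcal{R}(c)\ge 0$ pointwise, and a subsolution when $\mathcal{R}(c)\le 0$ pointwise.

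For the upper barrier, bounding the non-dominant terms crudely gives $\mathcal{R}(c)\ge -(\epsilon K+K^2)c-C+\delta c^5$ for $c\ge 1$, with $C=C(\mu,\nu,\epsilon,\delta,\Gamma,K)$; since $\delta c^5$ dominates a linear function as $c\to\infty$, one can fix $\overline{\theta}>\overline{\theta}_0$ with $\mathcal{R}(\overline{\theta})\ge 0$, i.e.\ $\overline{\theta}$ is a supersolution. For the lower barrier, the non-positive terms can be dropped and $\mathcal{R}(c)\le (\epsilon K+K^2)c+\delta c^5-\delta c^{-2}$ for $c\le 1$; since $-\delta c^{-2}\to-\infty$ as $c\to 0^+$, one can fix $\underline{\theta}\in(0,\underline{\theta}_0)$ with $\mathcal{R}(\underline{\theta})\le 0$, i.e.\ $\underline{\theta}$ is a subsolution. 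Both constants trivially satisfy the regularity requirements \eqref{eq 4.16}--\eqref{eq 4.17} (with vanishing time derivative), and by \eqref{eq 4.19} we have $\underline{\theta}=\underline{\theta}(0,\cdot)\le\underline{\theta}_0\le\theta_{0,\delta}\le\overline{\theta}_0\le\overline{\theta}=\overline{\theta}(0,\cdot)$, so hypothesis \eqref{eq 4.18} holds for both pairs.

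It remains only to note that the given strong solution $\theta$, which lies in the regularity class of Theorem~\ref{theorem 3}, satisfies its own equation with equality and is therefore simultaneously a sub- and a supersolution. Applying Lemma~\ref{lemma 5} to the pair $(\underline{\theta},\theta)$ yields $\underline{\theta}\le\theta$ a.e.\ on $(0,T)\times\Omega$, and applying it to $(\theta,\overline{\theta})$ yields $\theta\le\overline{\theta}$ a.e., which is exactly \eqref{eq 4.20}. The only step requiring some care is checking the sign of $\mathcal{R}$, and this rests entirely on the two stabilizing terms built into the approximation — the term $+\delta/\theta^2$ producing the strict positive lower bound and the term $-\delta\theta^5$ producing the upper bound — together with the fact that at this stage every $\rho$- and $u$-dependent coefficient is already known to be bounded; note that the resulting bounds $\underline{\theta},\overline{\theta}$ depend on $\epsilon$ and $\delta$, which is all that is needed here.
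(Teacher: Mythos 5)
Your proof is correct and follows essentially the same route as the paper: constant barriers are shown to be a sub-/supersolution of the approximate internal energy equation by exploiting the stabilizing terms $\delta/\theta^2$ (as $c\to 0^+$) and $-\delta\theta^5$ (as $c\to\infty$) against the $u$- and $\rho$-dependent terms, which are bounded via $\|u\|_{X_n}$ and $\|\rho\|_{C^1}$, and then the comparison principle of Lemma~\ref{lemma 5} is applied. Your write-up is in fact slightly more explicit than the paper's (clean residual $\mathcal{R}(c)$, use of the regularized continuity equation, and verification of \eqref{eq 4.16}--\eqref{eq 4.18}), but the underlying argument is identical.
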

\begin{proof}
We check that the constant function $\underline{\theta}$ is a subsolution of the problem
\begin{align*}
\underline{\theta}\partial_t\rho +\underline{\theta}\dv(\rho u)=\mu|\nabla u|^2 +\nu \rho |u|^2-\underline{\theta}\rho\dv u +\delta\frac{1}{\theta^2}+\epsilon\delta(\rho^\Gamma+2)|\nabla \rho|^2-\delta\theta^5.
\end{align*}
This is true as long as 
\begin{align*}
\frac{\delta}{\theta^2}\leq \underline{\theta}\rho\dv u +\underline{\theta}(\partial_t \rho +\dv(\rho u))-\mu|\nabla u|^2-\nu\rho|u|^2-\epsilon\delta(\rho^\Gamma+2)|\nabla \rho|^2.
\end{align*}
We note that all terms on the right-hand-side are bounded in terms of $\|u\|_{X_n}$ and $\|\rho\|_{C^1}$ provided $0<\underline{\theta}<1$.
Then by the comparison principle (Lemma 4.4) the first inequality follows.\\
The upper bound can be established in a similar way by help of the dominating term $-\delta\theta^5$ in equation \eqref{eq 4.8}.
\end{proof}
In addition, we observe the importance of the term $\frac{\delta}{\theta^2}$. This term guarantees that the absolute temperature stays bounded away from zero.

\begin{lemma}[{\cite[Lemma 3.3]{Novotny}}]\label{lemma 6}
Let the data $\rho_u,\,u$ belong to the regularity class as specified above and let the initial data $\theta_{0,\delta}\in H^1(\Omega)$.

Then any strong solution $\theta$ of the problem belonging to the regularity class \eqref{eq 4.16} satisfies the estimate
\begin{align}\label{eq 4.21}\begin{split}
ess\sup_{t\in(0,T)}\|\theta\|_{H^1(\Omega)}^2 &+\int_0^T\big( \|\partial_t\theta\|^2_{L^2(\Omega)}+\|\Delta\theta\|^2_{L^2(\Omega)}\big)dt\\
&\leq C\big(\Omega,\|\rho\|_{C^1([0,t]\times\overline{\Omega})},\|u\|_{X_n}, \inf_{(0,t)\times\Omega}\rho, \|\theta_{0,\delta}\|_{H^1(\Omega)}\big).\end{split}
\end{align}
\end{lemma}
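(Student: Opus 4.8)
The plan is to derive \eqref{eq 4.21} by two successive parabolic energy estimates for the quasilinear equation \eqref{eq 4.8}, in which $\rho$ and $u$ are frozen data. First I would put \eqref{eq 4.8} in advective form: since $\partial_t(\rho\theta)+\dv(\rho\theta u)=\rho(\partial_t\theta+u\cdot\nabla\theta)+\theta(\partial_t\rho+\dv(\rho u))$, equation \eqref{eq 4.8} becomes
\begin{align*}
\rho\big(\partial_t\theta+u\cdot\nabla\theta\big)+\theta\big(\partial_t\rho+\dv(\rho u)\big)-\kappa\Delta\theta=F_0,
\end{align*}
where $F_0=\mu|\nabla u|^2+\nu\rho|u|^2-\rho\theta\dv u+\frac{\delta}{\theta^2}+\epsilon\delta(\rho^\Gamma+2)|\nabla\rho|^2-\delta\theta^5$ is the right-hand side of \eqref{eq 4.8}. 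Three facts will be used throughout: $X_n$ is finite dimensional, so $\|u\|_{L^\infty}$ and $\|\nabla u\|_{L^\infty}$ are controlled by $\|u\|_{X_n}$; the coefficient $\rho$ of $\partial_t\theta$ is uniformly positive and the coefficient $\partial_t\rho+\dv(\rho u)$ is bounded in $L^\infty((0,T)\times\Omega)$ in terms of $\|\rho\|_{C^1([0,t]\times\overline{\Omega})}$ and $\|u\|_{X_n}$ (it is cheaper to keep this coefficient than to substitute $\epsilon\Delta\rho$, which would require a $C^2$-norm of $\rho$); and by Corollary \ref{cor 1} the solution satisfies $0<\underline{\theta}\le\theta\le\overline{\theta}$, so in particular $\frac{\delta}{\theta^2}\le\frac{\delta}{\underline{\theta}^{\,2}}$.

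For the first estimate I would test this equation with $\theta$ and integrate over $\Omega$. The viscous term gives $\kappa\|\nabla\theta\|_{L^2}^2$ after an integration by parts (Neumann condition \eqref{eq 4.9}), the transport and time-derivative terms combine, using $u|_{\partial\Omega}=0$, into $\tfrac12\tfrac{d}{dt}\int_\Omega\rho\theta^2$ up to a term bounded by $C\|\theta\|_{L^2}^2$, and the contribution $-\delta\theta^5\cdot\theta=-\delta\theta^6\le0$ is discarded. Bounding the remaining terms of $F_0$ against $\theta$ by H\"older and Young, and using $\int_\Omega\rho\theta^2\ge(\inf\rho)\|\theta\|_{L^2}^2$ together with $\int_\Omega\rho_{0,\delta}\theta_{0,\delta}^2\le C\|\theta_{0,\delta}\|_{H^1}^2$, a Gronwall argument yields
\begin{align*}
\essup_{t\in(0,T)}\|\theta(t)\|_{L^2(\Omega)}^2+\int_0^T\Big(\|\nabla\theta\|_{L^2(\Omega)}^2+\delta\|\theta\|_{L^6(\Omega)}^6\Big)\,dt\le C,
\end{align*}
with $C$ of the form asserted in \eqref{eq 4.21}.

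Next I would test with $\partial_t\theta$. The viscous term now gives $\tfrac{\kappa}{2}\tfrac{d}{dt}\|\nabla\theta\|_{L^2}^2$, the leading term gives $\int_\Omega\rho|\partial_t\theta|^2\ge(\inf\rho)\|\partial_t\theta\|_{L^2}^2$, and the super-linear term has a favourable sign, $-\delta\int_\Omega\theta^5\partial_t\theta=-\tfrac{\delta}{6}\tfrac{d}{dt}\|\theta\|_{L^6}^6$, so it is moved to the left-hand side; its initial value $\tfrac{\delta}{6}\|\theta_{0,\delta}\|_{L^6}^6$ is finite since $H^1(\Omega)\hookrightarrow L^6(\Omega)$ for $n\le3$. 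Every term linear in $\partial_t\theta$ — those from $\rho u\cdot\nabla\theta$, $\rho\theta\dv u$, $\theta(\partial_t\rho+\dv(\rho u))$, from the uniformly bounded terms $\mu|\nabla u|^2$, $\nu\rho|u|^2$, $\epsilon\delta(\rho^\Gamma+2)|\nabla\rho|^2$, and from $\frac{\delta}{\theta^2}$ — is absorbed by Young's inequality into $(\inf\rho)\|\partial_t\theta\|_{L^2}^2$, leaving a right-hand side bounded by $C(1+\|\nabla\theta\|_{L^2}^2+\|\theta\|_{L^2}^2)$, which is time-integrable by the first estimate. Integrating in time gives
\begin{align*}
\essup_{t\in(0,T)}\|\nabla\theta(t)\|_{L^2(\Omega)}^2+\int_0^T\|\partial_t\theta\|_{L^2(\Omega)}^2\,dt\le C.
\end{align*}
Finally, testing the equation with $-\Delta\theta$: by the Neumann condition $\int_\Omega\theta^5\Delta\theta\,dx=-5\int_\Omega\theta^4|\nabla\theta|^2\,dx\le0$, so the $-\delta\theta^5$ term is again favourable and is dropped; every remaining term is absorbed into $\kappa\|\Delta\theta\|_{L^2}^2$ (using once more $\theta\ge\underline{\theta}$ for the $\frac{\delta}{\theta^2}$ term), and the surviving right-hand side $C(1+\|\theta\|_{L^2}^2+\|\nabla\theta\|_{L^2}^2+\|\partial_t\theta\|_{L^2}^2)$ is time-integrable by the two previous estimates, so $\int_0^T\|\Delta\theta\|_{L^2(\Omega)}^2\,dt\le C$. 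Combining the three inequalities gives \eqref{eq 4.21}.

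I do not expect a deep obstacle: once \eqref{eq 4.8} is written in advective form the argument is standard parabolic energy estimation. The points that require care are (i) the singular term $\frac{\delta}{\theta^2}$, which is controlled in each of the three tests only because of the lower bound $\theta\ge\underline{\theta}$ supplied by Corollary \ref{cor 1}; (ii) the super-linear term $-\delta\theta^5$, which cannot be estimated directly but happens to have a favourable sign when the equation is tested against $\theta$, $\partial_t\theta$, or $-\Delta\theta$, so that it is always exploited rather than bounded; and (iii) bookkeeping the constants so that they depend only on the data listed in \eqref{eq 4.21} — in particular on the $C^1$-norm of $\rho$ rather than a higher norm, which is the reason for keeping $\partial_t\rho+\dv(\rho u)$ as a coefficient instead of replacing it by $\epsilon\Delta\rho$.
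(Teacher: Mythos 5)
Your proof is correct, and it is essentially the argument behind the cited result: the paper does not prove this lemma but quotes it from \cite[Lemma 3.3]{Novotny}, whose proof is the same chain of parabolic energy estimates (there carried out on the Kirchhoff variable $\int_0^\theta\kappa(z)\,dz$ because the conductivity is temperature-dependent, whereas in \eqref{eq 4.8} $\kappa$ is constant, so your direct tests of the advective form with $\theta$, $\partial_t\theta$ and $-\Delta\theta$ suffice, and keeping $\partial_t\rho+\dv(\rho u)$ as a bounded coefficient is exactly the right bookkeeping for the $C^1$-dependence of the constant). The only cosmetic caveat is that invoking Corollary \ref{cor 1} makes the constant depend additionally on $\underline{\theta}$, hence on $\delta$, $\epsilon$ and $\essinf_{\Omega}\theta_{0,\delta}$, which is harmless since these are fixed at this stage of the approximation scheme (and partly avoidable, since $\delta\theta^{-2}$ also has a favourable sign when tested against $-\Delta\theta$, just as $-\delta\theta^{5}$ does in all three tests).
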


After establishing a priori bounds on the temperature $\theta$ we are able to show existence of strong solutions to the approximate internal energy equation.
The key to this is that those bounds lead to the compactness of the solutions in $L^2(0,T);H^1(\Omega))$. 
Note that we can rewrite the approximate internal energy equation as a quasilinear parabolic equation in the temperature $\theta$. 
For smooth enough data we can apply the results by Ladyzhenskaya \cite{Ladyzhenskaya} to obtain a unique strong solution.

\begin{lemma}\label{lemma 7}
Under the previous assumptions the problem \eqref{eq 4.8}-\eqref{eq 4.12} has a unique strong solution $\theta=\theta_u$  belonging to the regularity class
\begin{align}\label{eq 4.22}\begin{split}
Y=\big\{&\partial_t\theta\in L^2((0,T)\times\Omega),~~\theta\in L^\infty(0,T;H^2(\Omega)\cap L^\infty(\Omega)),\\
&\frac{1}{\theta}\in L^\infty((0,T)\times\Omega)\big\}.\end{split}
\end{align}
Moreover, the mapping $u\to \theta_u$ maps bounded sets in $X_n$ into bounded sets in $Y$ and is continuous with values in $L^2(0,T;H^1(\Omega))$.
\end{lemma}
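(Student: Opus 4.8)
The plan is to reformulate the approximate internal energy equation \eqref{eq 4.8} with $e_\delta=\rho\theta$ as a scalar quasilinear parabolic equation for $\theta$, and then invoke the classical existence theory of Ladyzhenskaya--Solonnikov--Uraltseva together with the a priori estimates already at our disposal. Expanding the left-hand side via the product rule and using the continuity equation \eqref{eq 4.1} for $\rho$, one gets $\rho\,\partial_t\theta + \rho u\cdot\nabla\theta - \kappa\Delta\theta = F(t,x,\theta)$, where $F$ collects $\mu|\nabla u|^2 + \nu\rho|u|^2 - \rho\theta\,\dv u + \delta\theta^{-2} + \epsilon\delta(\rho^\Gamma+2)|\nabla\rho|^2 - \delta\theta^5$ plus the term coming from $\theta\,\epsilon\Delta\rho$ (from rewriting $\theta\,\partial_t\rho$). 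Since $u\in X_n$ is fixed and $\rho=\rho_u$ lies in $V$ by Lemma \ref{lemma 4}, all the coefficients $\rho$, $\rho u$, $\rho\,\dv u$, $\epsilon\Delta\rho$, $\epsilon\delta(\rho^\Gamma+2)|\nabla\rho|^2$ are Hölder continuous in $(t,x)$ and bounded, with $\rho$ bounded away from zero by \eqref{eq 4.15}; so after dividing by $\rho$ we have a uniformly parabolic equation with Hölder coefficients and a right-hand side that is smooth in $\theta$ on the range where $\theta>0$.

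The next step is to get an a priori two-sided bound $0<\underline\theta\le\theta\le\overline\theta$ from Corollary \ref{cor 1} (the $\delta\theta^{-2}$ term forces the lower bound, the $-\delta\theta^5$ term the upper bound), so that we may truncate $F$ outside $[\underline\theta,\overline\theta]$ and work with a globally Lipschitz, bounded nonlinearity without changing the solution set. On this truncated problem the theory of quasilinear parabolic equations (e.g. \cite{Ladyzhenskaya}, or a Schauder/fixed-point argument combined with the energy estimate of Lemma \ref{lemma 6}) yields a unique strong solution $\theta_u$ with $\partial_t\theta_u\in L^2((0,T)\times\Omega)$ and $\theta_u\in L^\infty(0,T;H^2(\Omega))$; Lemma \ref{lemma 6} provides exactly the bound \eqref{eq 4.21} needed to place $\theta_u$ in the class $Y$, and the pointwise lower bound from Corollary \ref{cor 1} gives $1/\theta_u\in L^\infty$. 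Uniqueness in this class is immediate from the comparison principle of Lemma \ref{lemma 5} applied with $\underline\theta=\overline\theta=\theta_u$.

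For the continuity of $u\mapsto\theta_u$: boundedness of the map from bounded sets of $X_n$ into bounded sets of $Y$ follows because the constants in \eqref{eq 4.15}, in Corollary \ref{cor 1}, and in the estimate \eqref{eq 4.21} depend only on $\|u\|_{X_n}$, $\|\rho_u\|_{C^1}$ and $\inf\rho_u$, all of which are controlled on bounded subsets of $X_n$ via Lemma \ref{lemma 4}. To get continuity into $L^2(0,T;H^1(\Omega))$, take $u_k\to u$ in $X_n$, note $\rho_{u_k}\to\rho_u$ in $C^1([0,T]\times\overline\Omega)$ by Lemma \ref{lemma 4}, extract a weak-$Y$ limit of $\theta_{u_k}$ (using the uniform bounds), pass to the limit in the equation, identify the limit with $\theta_u$ by uniqueness, and upgrade weak convergence to strong convergence in $L^2(0,T;H^1)$ by an Aubin--Lions compactness argument (the uniform control of $\partial_t\theta_{u_k}$ in $L^2$ and $\theta_{u_k}$ in $L^\infty(0,T;H^2)$ gives compactness in $L^2(0,T;H^1)$). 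The main obstacle is the bookkeeping in deriving the quasilinear form and verifying that the structural hypotheses of the parabolic regularity theorem are met uniformly — in particular that the $\theta^{-2}$ and $\theta^5$ nonlinearities, once the two-sided bound of Corollary \ref{cor 1} is in hand, can be treated as harmless bounded perturbations; the compactness/continuity argument is then routine.
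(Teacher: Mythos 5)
Your proposal is correct and follows essentially the same route as the paper's (very terse) argument: rewrite \eqref{eq 4.8} as a uniformly parabolic quasilinear equation for $\theta$ with coefficients controlled via Lemma \ref{lemma 4}, use the two-sided bounds of Corollary \ref{cor 1} and the estimate of Lemma \ref{lemma 6}, invoke Ladyzhenskaya's theory for existence, the comparison principle of Lemma \ref{lemma 5} for uniqueness, and compactness in $L^2(0,T;H^1(\Omega))$ for boundedness and continuity of $u\mapsto\theta_u$. The only slight imprecision is attributing the $L^\infty(0,T;H^2(\Omega))$ bound in $Y$ directly to \eqref{eq 4.21}, which by itself only gives $L^\infty(0,T;H^1(\Omega))\cap L^2(0,T;H^2(\Omega))$; that stronger bound comes from the quasilinear parabolic regularity theory you already invoke, exactly as in the paper.
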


Now, we are able to show existence of solutions to the approximate system.
We recall that $u\to (\rho_u,\theta_u)$ maps bounded sets in $X_n$ into bounded sets in $C([0,T],C^{2,\nu}(\overline{\Omega}))\times L^\infty(0,T;H^2(\Omega))$ and is continuous with values in $C^1([0,T]\times\Omega)\times L^2(0,T;H^1(\Omega))$.
Moreover from the Brinkman-type equation we obtain using the positivity of $\rho$ that
\begin{align*}
\|\rho u^2\|_{L^1(\Omega)}+\|\nabla u\|^2_{L^2(\Omega)}\leq C\|\rho \theta\|^2_{L^2(\Omega)}\leq C\big(\|\rho\|_{L^4(\Omega)}^2+\|\theta\|_{L^4(\Omega)}^2\big).
\end{align*}
Applying the Poincare inequality and the Sobolev imbedding yields
\begin{align}\label{eq 4.23}
\|u\|_{H^1(\Omega)}\leq C\big(\|\rho\|_{H^1(\Omega)}^2+ \|\theta\|_{H^1(\Omega)}^2\big).
\end{align}
Moreover, we have
\begin{align}\label{eq 4.24}
\|\Delta u\|_{L^2(\Omega)}\leq \|\nabla (\rho\theta)\|_{L^2(\Omega)}\leq \|\rho\|_{L^\infty(\Omega)}\|\nabla \theta\|_{L^2(\Omega)}+\|\theta\|_{L^2(\Omega)}\|\nabla \rho\|_{L^\infty(\Omega)}.
\end{align}
Thus, for each $t\in (0,T)$ $(\rho,\theta)\to u$ maps bounded sets in $C^{2,\nu}(\overline{\Omega})\times H^2(\Omega)$ into bounded sets in $X_n$ and is continuous with values in $H^1(\Omega)$.

Now, a direct application of the Leray-Schauder fixed point theorem yields the existence of a solution $(\rho,u,\theta)$ of the approximate system on a (possibly short) time interval $(0,T_n)$.
Iterating this procedure yields the existence of solutions on $(0,T)$ as long as the bounds are independent of the time $T_n$.

\subsection{Uniform estimates}
\label{uniform est}
In this section we establish uniform bounds, similar to those in chapter 3. 
The existence of such uniform bounds guarantees the global existence of the approximate solution in the desired spaces. 
Moreover, these estimates play a crucial role in the limit passage in the following sections.\\

First, from the approximate continuity equation it follows that the total mass of the system still remains constant in time, i.e.
\begin{align}\label{eq 4.25}
\int_\Omega \rho(t) dx= \int_\Omega \rho_{0,\delta}dx =M_{0,\delta}~~\text{for all } t\in [0,T].
\end{align}
Next, taking $u$ as a test function in the Brinkman-type equation \eqref{eq 4.5} we obtain
\begin{align}\label{eq 4.26}
\int_\Omega \rho\theta\dv u-|\nabla u|^2-\rho |u|^2 dx =0.
\end{align}
From the approximate internal energy equation \eqref{eq 4.8} we have
\begin{align}\label{eq 4.27}
\int_\Omega \rho\theta(t) dx=\int_\Omega \rho_{0,\delta}\theta_{0,\delta} dx +\int_0^T\int_\Omega \frac{\delta}{\theta^2}-\epsilon\delta\theta^5 dx dt.
\end{align} 
Instead of working with the internal energy balance we manipulate the equation \eqref{eq 4.8} to get an approximate entropy production.
To this end, we recall from the classical theory of thermodynamics that
\begin{align*}
\frac{de}{dt}=W+Q~~\text{and } T\frac{ds}{dt}=T\Delta+Q.
\end{align*}
Therefore, we compute
\begin{align*}
&~~~-\partial_t\big(\rho(\log \rho-\log\theta-1)\big)-\dv \big(\rho(\log \rho-\log\theta-1)u\big)\\
&=-\rho_t(\log \rho-\log\theta-1)-\rho_t+\frac{\rho\theta_t}{\theta}-(\log \rho-\log\theta-1)\dv(\rho u)\\
&~~~\,-\rho u\cdot\nabla(\log \rho-\log\theta-1)\\
&=(\log \theta-\log\rho)(\rho_t +\dv(\rho u))+\frac{\rho\theta_t}{\theta}+\dv(\rho u)-u\cdot\nabla \rho +\frac{\rho}{\theta}u\cdot\nabla\theta\\
&=(\log \theta-\log\rho-1)\epsilon\Delta \rho+\rho \dv u+\frac{\rho}{\theta}(\theta_t+u\cdot\nabla \theta)+\epsilon\Delta\rho\\
&=(\log\theta-\log\rho -1)\epsilon\Delta \rho+\rho \dv u+\frac{1}{\theta}\big((\rho\theta)_t+\dv(\rho\theta u)\big)
\end{align*} 
which gives the relation between the internal energy and the entropy of the system.

Then, dividing the internal energy balance by $\theta$ yields 
\begin{align*}
\partial_t s+\dv(su)-\frac{\Delta\theta}{\theta}&=\frac{1}{\theta}(\mu|\nabla u|^2+\nu \rho|u|^2)+ \frac{\delta}{\theta^3}-\delta\theta^4\\
&+\epsilon\Delta\rho(\log\theta-\log\rho -1).
\intertext{Rewriting this expression yields the approximate entropy equation}
\partial_t s+\dv(su)-\dv(\frac{\nabla \theta}{\theta})&=\frac{1}{\theta}(\mu|\nabla u|^2+\nu \rho|u|^2+\frac{|\nabla \theta|^2}{\theta})+\frac{\delta}{\theta^3}-\delta\theta^4\\
&+\epsilon\Delta\rho(\log\theta-\log\rho -1).
\end{align*}
For higher regularity we modify the entropy production rate slightly and obtain
\begin{align}\label{eq 4.28}\begin{split}
\partial_t s+\dv(su)-\dv(\frac{\nabla \theta}{\theta})&=\frac{1}{\theta}\big(\mu|\nabla u|^2+\nu \rho|u|^2+(\frac{\kappa}{\theta}+\delta\theta^{\Gamma-1})|\nabla \theta|^2\big)+\frac{\delta}{\theta^3}\\
&-\delta\theta^4+\epsilon\delta(\Gamma\rho^{\Gamma-2}+2)|\nabla \rho|^2+\epsilon\Delta\rho(\log\theta-\log\rho -1).\end{split}
\end{align}
The two approximate equations give rise to the following estimates, where we multiplied equation \eqref{eq 4.28} by an arbitrary positive constant $\overline{\theta}$ and integrated over the space time domain.
\begin{align*}
&\int_\Omega H_{\delta,\overline{\theta}}(\rho,\theta)(\tau)dx+\overline{\theta}\int_0^\tau\int_\Omega \frac{1}{\theta}\big( \mu |\nabla u|^2+\nu \rho|u|^2+(\frac{\kappa}{\theta}+\delta\theta^{\Gamma-1})|\nabla \theta|^2\big) dxdt\\
 & +\int_0^\tau\int_\Omega \frac{\delta}{\theta^3}+\delta\theta^5 +\epsilon\delta(\Gamma\rho^{\Gamma-2}+2)|\nabla \rho|^2 dxdt \\
 &= \int_\Omega H_{\delta,\overline{\theta}}(\rho,\theta)(0)dx +\int_0^\tau\int_\Omega\frac{\delta}{\theta^2} +\delta\theta^4-\epsilon\overline{\theta}\Delta\rho(\log\theta-\log\rho-1)dxdt,
\end{align*}
where $H_{\delta,\overline{\theta}}(\rho,\theta)=e-s=\rho\theta+\rho(\log\rho-\log\theta-1)$.
Integration by pats in the last term yields
\begin{align}\label{eq 4.29}\begin{split}
&\int_\Omega H_{\delta,\overline{\theta}}(\tau)dx+\overline{\theta}\int_0^\tau\int_\Omega \frac{1}{\theta}(\mu|\nabla u|^2+\nu \rho|u|^2+(\frac{\kappa}{\theta}+\delta\theta^{\Gamma-1})|\nabla \theta|^2\big)+\frac{\delta}{\theta^3} +\delta\theta^5 dxdt\\
& + \int_0^\tau\int_\Omega\epsilon\delta(\Gamma\rho^{\Gamma-2}+2)|\nabla \rho|^2+\epsilon\frac{|\nabla \rho|^2}{\rho} dxdt \\
&=\int_\Omega H_{\delta,\overline{\theta}}(0)dx +\int_0^\tau\int_\Omega \frac{\delta}{\theta^2} +\delta\theta^4+ \epsilon\overline{\theta}\frac{\nabla \rho\cdot\nabla \theta}{\theta}dxdt~~\text{for all }\tau \in [0,T].\end{split}
\end{align}
We observe that we can control the terms on the right-hand side by the terms on the left-hand side and the initial data.

Indeed, the quantity $\frac{\delta}{\theta^2}$ is dominated for low temperatures by its counterpart $\frac{\delta}{\theta^3}$.
Moreover, we estimate
\begin{align*}
\int_0^\tau\int_\Omega \epsilon\overline{\theta}\frac{\nabla \rho\cdot\nabla \theta}{\theta}dxdt &\leq \int_0^\tau\int_\Omega \bigg|\epsilon\overline{\theta}\frac{\nabla \rho\cdot\nabla \theta}{\theta}\bigg|dxdt \\
&\leq \int_0^\tau\int_\Omega\epsilon\overline{\theta}\frac{|\nabla \theta|^2}{2\theta^2}dxdt+\int_0^\tau\int_\Omega\epsilon\overline{\theta}|\nabla\rho|^2 dxdt ,
\end{align*}
where we can bound both terms with terms on the left-hand side of equation \eqref{eq 4.29} for sufficiently small $\epsilon$.

Now, we can summarize the estimates as follows
\begin{align}\label{eq 4.30}
ess\sup_{t\in(0,T)}\int_\Omega H_{\delta,\overline{\theta}}(\rho,\theta)(t)dx\leq c\\\label{eq 4.31}
\int_0^T\int_\Omega \frac{1}{\theta}\bigg(\mu|\nabla u|^2+\nu \rho|u|^2+(\frac{\kappa}{\theta}+\delta\theta^{\Gamma-1})|\nabla \theta|^2\bigg)+\frac{\delta}{\theta^3}+\delta\theta^5 dxdt\leq c\\\label{eq 4.32}
\int_0^T\int_\Omega \epsilon\frac{|\nabla \rho|^2}{\rho}+\epsilon\delta(\Gamma\rho^{\Gamma-2}+2)|\nabla \rho|^2 dxdt\leq c,
\end{align}
where c is a positive constant depending on the initial data but is independent of  $T,\,\epsilon,\,\delta$ and $n$.

\subsection{Limit Passage}
\label{limit}
The third step is to pass to the limit in the approximate/ regularized system.
This is done in three steps. 
First, we let $n \to \infty$. This is the approximation step in the Fourier series for the Brinkman-type equation. 
Next, we pass to the limit in $\epsilon$ as $\epsilon \to 0$, i.e. the additional regularity term for the density $\rho$.
And finally, we let $\delta \to 0$. 
This is the most crucial step because only here we will have a smallness condition on the time interval in which the weak solution exists.

Each step utilizes similar ideas: from finding uniform bounds to the div-curl lemma.
For the details of the theorems used in these steps we refer to the appendix.
  
\subsubsection{Limit $n\to\infty$}
\label{n limit}
Let the vector space $X$ be defined as 
\begin{align*}
X\equiv \bigcup_{n} X_n ~~\text{is dense in } H^1_0(\Omega).
\end{align*}
For $\epsilon>0$ and $\delta>0$ fixed let $(\rho_n,u_n,\theta_n)_{n}$ be a sequence of approximate solutions.

In addition to the uniform estimates \eqref{eq 4.30}-\eqref{eq 4.32} we obtain the following estimates:\\
From the quasilinear parabolic equation for the absolute temperature $\theta$ we obtain that $0\leq \underline{\theta}\leq \theta_n(t,x)\leq \overline{\theta}$ for all $(t,x)\in(0,T)\times\Omega$. 
Hence, $\|\nabla u_n\|^2_{L^2(\Omega)}\leq c$ and together with the boundary condition $ u|_{\partial\Omega}=0$ we have $u_n\in H^1_0(\Omega)$.
Then, by the Sobolev embedding $u_n\in L^6(\Omega)$.
The boundedness of the sequence then implies that
\begin{align}\label{eq 4.33}
u_n\to u~~\text{weakly in } H^1(\Omega).
\end{align}
From the kinetic equation it follows that
\begin{align*}
\frac{d}{dt}\frac{1}{2}\|\rho_n\|_{L^2(\Omega)}^2+\epsilon\|\nabla \rho_n\|^2_{L^2(\Omega)}\leq C\|u_n\|_{L^6(\Omega)}^2\|\rho_n\|^2_{L^4(\Omega)}.
\end{align*}
By the interpolation inequality we have
\begin{align*}
\frac{d}{dt}\|\rho_n\|_{L^2(\Omega)}^2+\epsilon\|\nabla \rho_n\|^2_{L^2(\Omega)}\leq C(M_{0,\delta},\epsilon,\|u\|_{L^6(\Omega)},\Omega).
\end{align*}
Thus the sequence $\rho_n$ is bounded in $L^\infty(0,T;L^2(\Omega))\cap L^2(0,T;H^1(\Omega))$.
Therefore we can assume that
\begin{align}\label{eq 4.34}
\rho_n\to \rho~~\text{weakly-(*) in } L^\infty(0,T;L^2(\Omega)).
\end{align}
By applying Poincare inequality to the additional term in equation \eqref{eq 4.32} we get
\begin{align}\label{eq 4.35}
\rho_n,~\rho_n^{\Gamma/2}~~\text{bounded in } L^2(0,T;H^1(\Omega)),
\end{align}
and by interpolation
\begin{align}\label{eq 4.36}
\rho_n~~\text{bounded in } L^\Gamma(0,T;L^{3\Gamma}(\Omega)).
\end{align}

By the boundedness of the entropy production rate we obtain that
\begin{align}\label{eq 4.37}
&\nabla \theta_n^{\Gamma/2}~~\text{bounded in } L^2(0,T;L^2(\Omega))
\intertext{and}
&\theta_n~~\text{bounded in } L^5((0,T)\times \Omega).
\end{align}
In addition, $\nabla\log \theta_n$ is bounded in $L^2(0,T;L^2(\Omega))$.
This implies that $\theta_n,~\theta_n^{\Gamma/2}\in L^2(0,T;H^1(\Omega))$ and we obtain
\begin{align}\label{eq 4.38}
\theta_n\to \theta~~\text{weakly in } L^2(0,T;H^1(\Omega)).
\end{align}
Moreover, we have
\begin{align}\label{eq 4.39}
\int_0^T\int_\Omega \frac{1}{\theta^3}dxdt\leq \liminf_{n\to \infty} \int_0^T\int_\Omega \frac{1}{\theta_n^3}dxdt.
\end{align}
By the standard Sobolev embedding we derive the higher integrability estimates of $\theta_n$, i.e.
\begin{align}\label{eq 4.40}
\nabla \theta_n~~\text{bounded in } L^\Gamma(0,T;L^{3\Gamma}(\Omega)).
\end{align}
As a byproduct, we get that
\begin{align}\label{eq 4.41}
\log \theta_n~~\text{bounded in } L^q((0,T)\times\Omega)~\text{for any finite } q\geq 1.
\end{align}

Now, we are able to pass to the limit in the equations with respect to the parameter $n$.\\
The limit in the Poisson equation is obtained via the standard Galerkin argument, where we note that
\begin{align}\label{eq 4.42}
\int_\Omega \rho_n\theta_n\dv u_n dx\leq \frac{1}{2}\|\nabla u\|^2_{L^2(\Omega)}+C\big( \|\rho_n\|^2_{L^4(\Omega)}+ \|\theta_n\|^2_{L^4(\Omega)}\big).
\end{align}
Thus we can pass to the limit in the Brinkman equation by the bounds established before
\begin{align}\label{eq 4.43}
-\rho u+ \Delta u=\nabla(\rho \theta) ~~\text{for a.e. } x\in \Omega.
\end{align}
From the kinetic equation we have
\begin{align*}
\big(\partial_t-\epsilon\Delta\big)[\rho_n]=-\nabla\rho_n\cdot u-\rho_n\dv u_n,
\end{align*}
where the terms on the right-hand side are bounded in $L^2(0,T;L^{3/2}(\Omega))$.
Thus
\begin{align}\label{eq 4.44}
\rho_n\to \rho~~\text{ a.e. in } (0,T)\times\Omega
\end{align}
and we can let $n\to \infty$ in the approximate continuity equation to obtain
\begin{align}\label{eq 4.45}
\partial_t\rho +\dv(\rho u)=\epsilon\Delta\rho~~\text{a.e. in } (0,T)\times\Omega,
\end{align}
where the density $\rho$ is a positive function satisfying
\begin{align}\label{eq 4.46}
\nabla \rho(t,\cdot) \cdot n|_{\partial\Omega}=0~~\text{for a.a. } t\in (0,T)
\end{align}
in the sense of traces, together with the initial data
\begin{align}\label{eq 4.47}
\rho(0,\cdot)=\rho_{0,\delta}.
\end{align}
Moreover, 
\begin{align}\label{eq 4.48}
\nabla \rho_n\to \nabla \rho ~~\text{in } L^2(0,T;L^2(\Omega)),
\end{align}
where we used that
\begin{align*}
\int_\Omega \rho_n^2(\tau) dx+2\epsilon \int_0^\tau \int_\Omega |\nabla \rho_n|^2dxdt \to 
\int_\Omega \rho_{\delta,0}^2 dx- \int_0^\tau \int_\Omega \rho^2\dv u dxdt
\end{align*}
and
\begin{align*}
 \int_0^\tau \int_\Omega \rho^2\dv u dxdt = \int_\Omega \rho^2(\tau) dx+2\epsilon \int_0^\tau \int_\Omega |\nabla \rho|^2dxdt.
\end{align*}

Now, we can consider the limit in the entropy balance equation.
The main difficulty here is to show the strong convergence of the temperature.
To this end, we apply the Div-Curl Lemma discovered by Tartar \cite{Tartar} to the function $U$ and $V$, specified below. 
The details off the div-curl Lemma can be found in the Appendix.

We rewrite the equation \eqref{eq 4.28} in the following form
\begin{align}\label{eq 4.50}\begin{split}
\partial_t s&+\dv(su)-\dv(\frac{\nabla \theta}{\theta})-\epsilon\dv(\nabla \rho(\log\theta-\log\rho -1)) \\
&=\frac{1}{\theta}\big(\mu|\nabla u|^2+\nu \rho|u|^2+(\frac{\kappa}{\theta}+\delta\theta^{\Gamma-1})|\nabla \theta|^2\big)+\frac{\delta}{\theta^3}+\delta\theta^5\\
 &+\epsilon\delta(\Gamma\rho^{\Gamma-2}+2)|\nabla \rho|^2+\epsilon\big(\frac{|\nabla \rho|^2}{\rho}+\frac{\nabla\rho \cdot\nabla \theta}{\theta}\big),\end{split}
\end{align}
where we used that
\begin{align*}
\epsilon\Delta\rho(\log\theta-\log\rho -1)=\epsilon\dv(\nabla \rho(\log\theta-\log\rho -1))-\epsilon\big(\frac{|\nabla \rho|^2}{\rho}+\frac{\nabla\rho \cdot\nabla \theta}{\theta}\big).
\end{align*}
Setting
\begin{align*}
U=[s,su-\frac{\nabla \theta}{\theta}-\epsilon\nabla \rho(\log\theta-\log\rho -1)]~~\text{and }~V=[\theta,0,0,0]
\end{align*}
we can check the assumptions for the Div-Curl Lemma.

The temperature $\theta$ is bounded in $L^2((0,T)\times\Omega)$ and $curl(V)$ yields only spatial partial derivatives and thus is bounded in $L^2((0,T)\times\Omega)$ which is compact embedded into $W^{-1,2}((0,T)\times\Omega)$.
By the uniform estimates obtained before we note that the right-hand side of equation \eqref{eq 4.50} is bounded in $L^1((0,T)\times\Omega)$ and therefore precompact in $W^{-1,s}((0,T)\times\Omega)$ provided $s\in [1,\frac{4}{3})$.
Thus it remains to show that $U$ is bounded in a better space than $L^1$.
To see this we note that
\begin{align*}
|s(\rho_n,\theta_n)|\leq c(\rho_n+\rho_n|\log\rho_n|+\rho_n|\log\theta_n|)
\end{align*}
and by the uniform estimates $s$ is bounded in $L^{\Gamma/3}((0,T)\times\Omega)$.
In addition, $su$ is bounded in $L^p((0,T)\times\Omega)$, where $\frac{1}{p}=\frac{1}{2}+\frac{3}{\Gamma}$ provided $\Gamma>6$.

For the other terms we have that $\nabla \log\theta_n$ is bounded in $L^{2}((0,T)\times\Omega)$ and 
\begin{align*}
\epsilon\nabla \rho(\log\theta-\log\rho -1)~~\text{is bounded in } L^{\frac{2\Gamma}{\Gamma+6}}((0,T)\times\Omega).
\end{align*}
Then the Div-Curl Lemma states that
\begin{align}
\overline{s(\rho,\theta)\,\theta}=\overline{s(\rho,\theta)}\theta,
\end{align}
where the symbol $\overline{F(u)}$ denotes the weak $L^1$-limit of the sequence $F(u_n)$ of composed functions.

The goal is now, to conclude that we have almost everywhere convergence of $\theta$. 
By the definition of the entropy we have 
\begin{align*}
s(\rho,\theta)=-\rho(\log \rho-\log\theta-1),
\end{align*}
where we note that the entropy $s$ is increasing in the temperature.
Thus,
\begin{align}
\overline{\rho\log(\theta)\theta}\geq \overline{\rho\log\theta}\theta.
\end{align}
Moreover, by the strong convergence of $\rho_n$ see equation \eqref{eq 4.44}, we have
\begin{align*}
\overline{\rho\log(\theta)\theta}=\rho \overline{(\log\theta)\theta}.
\end{align*}
Combining the above equations we infer that
\begin{align*}
\overline{\log(\theta)\theta}= \overline{\log\theta}\theta.
\end{align*}
By the strict convexity of the function $x\log x$ we have that
\begin{align}
\theta_n\to \theta~~\text{a.e in } (0,T)\times\Omega.
\end{align}
For details in the argument we refer to Appendix.

Now, we can take the limit in the approximate entropy equation.
To this end, we first turn the equation into an inequality by applying Youngs inequality to the $\nabla \rho\cdot \nabla\theta$ term.
\begin{align}
\begin{split}
&\partial_t s+\dv(su)-\dv(\frac{\nabla \theta}{\theta})-\epsilon\dv(\nabla \rho(\log\theta-\log\rho -1)) \\
\geq &\frac{1}{\theta}\big(\mu|\nabla u|^2+\nu \rho|u|^2+(\frac{\kappa}{\theta}+\frac{\delta}{2}\theta^{\Gamma-1})|\nabla \theta|^2\big)+\frac{\delta}{\theta^3}+\delta\theta^5\\
&+\epsilon\delta(\Gamma\rho^{\Gamma-2}+2)|\nabla \rho|^2+\epsilon \frac{|\nabla \rho|^2}{\rho},\end{split}
\end{align}
As a consequence from the previous results we can identify the limits of the individual terms.
\begin{align*}
s(\rho_n,\theta_n)&\to s(\rho,\theta)~~\text{in } L^2((0,T)\times\Omega),\\
s(\rho_n,\theta_n)u_n&\to s(\rho,\theta)u~~\text{weakly in } L^1((0,T)\times\Omega).\\
\end{align*}
The almost all convergence of $\theta_n$ implies that
\begin{align*}
\frac{1}{\theta_n}&\to \frac{1}{\theta} ~~\text{in } L^2((0,T)\times\Omega),\\
\nabla \log\theta_n&\to \nabla \log \theta~~\text{weakly in } L^1((0,T)\times\Omega),\\
\theta_n^{\Gamma-1}\nabla\theta_n&\to \theta^{\Gamma-1}\nabla \theta~~\text{weakly in } L^p((0,T)\times\Omega) ~p>1.
\end{align*}
To control the $\epsilon$-term we note that
\begin{align*}
|\epsilon(\log\theta_n-\log\rho_n-1)\nabla\rho_n|\leq c|\nabla \rho_n|(|\log\theta_n|+|\log\rho_n|+1)
\end{align*}
and all terms on the right are bounded in $L^p((0,T)\times\Omega)$ for some $p>1$.
Thus we have 
\begin{align*}
\epsilon(\log\theta_n-\log\rho_n-1)\nabla\rho_n\to \epsilon(\log\theta-\log\rho-1)\nabla\rho~~\text{weakly in } L^1((0,T)\times\Omega).
\end{align*}
Identifying the limit in the remaining terms of the entropy production rate yields
\begin{align*}
\frac{\nabla u_n}{\sqrt{\theta_n}}&\to \frac{\nabla u}{\sqrt{\theta}}~~ \text{weakly in } L^2((0,T)\times\Omega),\\
\frac{\nabla\rho_n}{\sqrt{\rho_n}}&\to \frac{\nabla \rho}{\sqrt{\rho}}~~ \text{weakly in } L^2((0,T)\times\Omega),\\
\sqrt{\Gamma\rho_n^{\Gamma-2}+2}\nabla \rho_n&\to \sqrt{\Gamma\rho^{\Gamma-2}+2}\nabla \rho~~\text{weakly in } L^2((0,T)\times\Omega).
\end{align*}
These convergence results are sufficient to perform the weak limit in the approximate entropy equation as $n\to \infty$.
We note that the inequality is preserved under the weak limit due to the lower semi-continuity of convex superposition operators.
This allows us to conclude that
\begin{align}\begin{split}
&\int_0^T\int_\Omega s(\rho,\theta)\big(\partial_t\phi+u\cdot\nabla \phi\big)dxdt+\int_0^T\int_\Omega\big(\frac{\nabla \theta}{\theta}-\epsilon(\log \theta-\log\rho-1)\nabla \rho\big)\cdot \nabla \phi dxdt\\
&+\int_0^T\int_\Omega \bigg(\frac{1}{\theta}\big(\mu|\nabla u|^2+\nu \rho|u|^2+(\frac{\kappa}{\theta}+\frac{\delta}{2}\theta^{\Gamma-1})|\nabla \theta|^2\big)+\frac{\delta}{\theta^3}+\delta\theta^5 \bigg)\phi dxdt\\
&+\int_0^T\int_\Omega \bigg(\epsilon\delta(\Gamma\rho^{\Gamma-2}+2)|\nabla \rho|^2+\epsilon \frac{|\nabla \rho|^2}{\rho}\bigg)\phi dxdt \leq -\int_\Omega s(\rho_{\delta,0},\theta_{\delta,0})\phi(0,\cdot)dx\end{split}
\end{align} 
for all $\phi\in C_c^\infty([0,T)\times\overline{\Omega})$.

Thus we can conclude that after the first limit the quantities satisfy the following system of equations.\\

The \textbf{approximate continuity equation}
\begin{align}\label{eq 4.56}
\partial_t\rho + \dv(\rho u)=\epsilon\Delta \rho~~\text{in } (0,T)\times\Omega,
\end{align}
with homogeneous Neumann boundary condition
\begin{align}
\nabla \rho \cdot n=0 ~~\text{on } (0,T)\times \partial\Omega,
\end{align}
and the initial condition
\begin{align}
\rho(0,\cdot)=\rho_{0,\delta}.
\end{align}

The \textbf{Brinkman-type equation}
\begin{align}\label{eq 4.59}
\int_\Omega \rho \theta \dv \phi dx= \int_\Omega \mu \nabla u :\nabla \phi +\nu \rho u\cdot \phi dx,
\end{align}
for any test function $\phi \in C_c^\infty(\overline{\Omega})$ with
\begin{align}
\phi =0 ~~\text{on } \partial\Omega~~\text{ no-slip boundary conditions.}
\end{align}

The \textbf{approximate internal energy balance}
\begin{align}
\int_\Omega \rho\theta(t)d x=\int_\Omega \rho_{0,\delta}\theta_{0,\delta} d x+\int_0^t\int_\Omega \frac{\delta}{\theta^2}-\delta\theta^5\,d x d \tau.
\end{align}
for all $t\in [0,T]$.

The \textbf{approximate entropy inequality}
\begin{align}\begin{split}
&\int_0^T\int_\Omega s(\rho,\theta)\big(\partial_t\phi+u\cdot\nabla \phi\big)dxdt+\int_0^T\int_\Omega\big(\frac{\nabla \theta}{\theta}-\epsilon(\log \theta-\log\rho-1)\nabla \rho\big)\cdot \nabla \phi dxdt\\
&+\int_0^T\int_\Omega \bigg(\frac{1}{\theta}\big(\mu|\nabla u|^2+\nu \rho|u|^2+(\frac{\kappa}{\theta}+\frac{\delta}{2}\theta^{\Gamma-1})|\nabla \theta|^2\big)+\frac{\delta}{\theta^3}+\delta\theta^5 \bigg)\phi dxdt\\
&+\int_0^T\int_\Omega\bigg( \epsilon\delta(\Gamma\rho^{\Gamma-2}+2)|\nabla \rho|^2+\epsilon \frac{|\nabla \rho|^2}{\rho}\bigg)\phi dxdt \leq -\int_\Omega s(\rho_{\delta,0},\theta_{\delta,0})\phi(0,\cdot)dx\end{split}
\end{align} 
for all $\phi\in C_c^\infty([0,T)\times\overline{\Omega})$ with $\phi \geq 0$.\\

Rewriting the last equation
\begin{align*}\begin{split}
&\int_\Omega s(\rho_{\delta,0},\theta_{\delta,0})\phi(0,\cdot)dx-\int_0^T\int_\Omega s(\rho,\theta)\big(\partial_t\phi+u\cdot\nabla \phi\big)dxdt\\
&+\int_0^T\int_\Omega\big(\frac{\nabla \theta}{\theta}-\epsilon(\log \theta-\log\rho-1)\nabla \rho\big)\cdot \nabla \phi dxdt\\
&\geq\int_0^T\int_\Omega \bigg(\frac{1}{\theta}\big(\mu|\nabla u|^2+\nu \rho|u|^2+(\frac{\kappa}{\theta}+\frac{\delta}{2}\theta^{\Gamma-1})|\nabla \theta|^2\big)+\frac{\delta}{\theta^3}+\delta\theta^5\bigg)\phi dxdt\\
&+\int_0^T\int_\Omega \bigg(\epsilon\delta(\Gamma\rho^{\Gamma-2}+2)|\nabla \rho|^2+\epsilon \frac{|\nabla \rho|^2}{\rho}\bigg)\phi dxdt\end{split}
\end{align*} 
for all $\phi\in C_c^\infty([0,T)\times\overline{\Omega})$ with $\phi \geq 0$ we note that the left-hand side of the equation can be understood as a non-negative linear form on the space of smooth function with compact support in $[0,T)\times\overline{\Omega}$.
By the Riesz representation theorem, there exists a regular, non-negative Borel measure $\Sigma_{\epsilon,\delta}$ on $[0,T)\times\overline{\Omega}$ that can be extended to $[0,T]\times\overline{\Omega}$ such that 
\begin{align}\label{eq 4.63}\begin{split}
&\int_0^T\int_\Omega s(\rho,\theta)\big(\partial_t\phi+u\cdot\nabla \phi\big)dxdt+\int_0^T\int_\Omega\big(\frac{\nabla \theta}{\theta}-\epsilon(\log \theta-\log\rho-1)\nabla \rho\big)\cdot \nabla \phi dxdt\\
&+\langle\Sigma_{\epsilon,\delta},\phi \rangle_{[\mathcal{M},C]([0,T]\times\overline{\Omega})} = -\int_\Omega s(\rho_{\delta,0},\theta_{\delta,0})\phi(0,\cdot)dx\end{split}
\end{align} 
for all $\phi\in C_c^\infty([0,T)\times\overline{\Omega})$ with $\phi \geq 0$.
Moreover,
\begin{align*}
\Sigma_{\epsilon,\delta}\geq& \frac{1}{\theta}\big(\mu|\nabla u|^2+\nu \rho|u|^2+(\frac{\kappa}{\theta}+\frac{\delta}{2}\theta^{\Gamma-1})|\nabla \theta|^2\big)+\frac{\delta}{\theta^3} +\delta\theta^5\\
&+\epsilon\delta(\Gamma\rho^{\Gamma-2}+2)|\nabla \rho|^2+\epsilon \frac{|\nabla \rho|^2}{\rho}.
\end{align*}

\subsubsection{Limit $\epsilon\to 0$}
\label{e limit}
The next step is to let $\epsilon\to 0$ in the approximate system.
To this end, let $(\rho_\epsilon, u_\epsilon, \theta_\epsilon)$ be a solution of equations \eqref{eq 4.56}-\eqref{eq 4.63}.\\
 
Then, similar to the previous part we obtain the following estimates independent of $\epsilon$:
\begin{align}\label{eq 4.64}
&\sup_{\epsilon>0}\bigg\{\essup_{t\in (0,T)} \int_\Omega H_{\delta,\overline{\theta}}(t)\,dx\bigg\}<\infty \\
&\sup_{\epsilon>0}\bigg\{\Sigma_{\epsilon,\delta}\big[[0,T]\times \overline{\Omega}\big]\bigg\}<\infty.
\end{align}
This implies that 
\begin{align}
&\sup_{\epsilon>0}\bigg\{\int_0^T\int_\Omega \frac{1}{\theta}\big(\mu|\nabla u|^2+\nu \rho|u|^2+ (\frac{\kappa}{\theta}+\delta\theta^{\Gamma-1})|\nabla \theta|^2\big)+\frac{\delta}{\theta^3}+\delta\theta^5\, dxdt\bigg\}<\infty,\\
&\sup_{\epsilon>0}\bigg\{\epsilon\delta\int_0^T\int_\Omega \big(\Gamma\rho^{\Gamma-2}+2\big)|\nabla \rho|^2\, dxdt\bigg\}<\infty,\\\label{eq 4.68}
&\sup_{\epsilon>0}\bigg\{\epsilon\int_0^T\int_\Omega \frac{|\nabla \rho|^2}{\rho}\,dxdt\bigg\}<\infty.
\end{align}
As in the previous part we conclude that
\begin{align}
&\rho_\epsilon^{\Gamma/2} ~~\text{bounded in } L^2(0,T;H^1(\Omega)),\\
&\theta_\epsilon^{\Gamma/2}~~\text{bounded in } L^2(0,T;H^1(\Omega)).
\end{align}

Thus from the Brinkman equation we obtain
\begin{align}
u_\epsilon ~~\text{bounded in } H^1(\Omega),
\end{align}
and 
\begin{align}
u_\epsilon\to u ~~\text{weakly in } H^1(\Omega).
\end{align}
Passing to the limit in the Brinkman equation yields
\begin{align}\label{eq 4.73}
\int_\Omega \mu \nabla u :\nabla \varphi dx+\int_\Omega \nu\rho u\cdot \varphi dx-\int_\Omega \rho \theta \dv\varphi dx =0,
\end{align}
for any $\varphi \in C_c^\infty(\overline{\Omega})$ with $\varphi|_{\partial\Omega}=0$.\\

Multiplying the approximate continuity equation by $\rho_\epsilon$ and integrating by parts yields
\begin{align*}
\frac{1}{2}\int_\Omega\rho_\epsilon^2(t)dx + \int_0^t\int_\Omega|\nabla \rho_\epsilon|^2dxdt =\frac{1}{2}\int_\Omega\rho_{0,\delta}^2dx -\frac{1}{2}\int_0^t\int_\Omega\rho_\epsilon^2\dv u_\epsilon dxdt.
\end{align*}
Thus, we observe that 
\begin{align}
&\sqrt{\epsilon}\nabla \rho_\epsilon~~\text{is bounded in } L^2(0,T;L^2(\Omega)),\\
&\rho_\epsilon~~ \text{is bounded in } L^\infty(0,T;L^2(\Omega)).
\end{align}
and in particular
\begin{align}
\epsilon\nabla \rho_\epsilon \to 0 ~~\text{in } L^2(0,T;L^2(\Omega)).
\end{align}
This yields that
\begin{align}
\rho_\epsilon u_\epsilon\to \rho u ~~\text{weakly in } L^2(0,T;L^2(\Omega)).
\end{align}
Thus, we can pass to the limit in the approximate continuity equation as $\epsilon\to 0$ and the limit $\rho$ satisfies the integral identity
\begin{align}
\int_0^T\int_\Omega\big(\rho \partial_t\phi + \rho u\cdot \nabla \phi\big) dxdt+\int_\Omega \rho_{0,\delta}dx =0
\end{align}
for any test function $\phi \in C_c^\infty([0,T)\times\overline{\Omega})$ i.e. $\rho,\, u$ satisfy in the following equation in the sense of distributions
\begin{align}
\partial_t\rho +\dv(\rho u)=0.
\end{align}

It remains to pass to the limit in the approximate entropy equation.
To this end, we once more need to show strong convergence of the absolute temperature.
The idea is again to show uniform estimates for $\theta_\epsilon$ and then apply the Div-Curl Lemma.

Taking a closer look at the bounds of the entropy production rate, we derive that
\begin{align*}
&\theta_\epsilon^{\Gamma/2}~~ \text{is bounded in } L^2(0,T;H^1(\Omega)),\\
&\theta_\epsilon^{-1}~~ \text{is bounded in } L^3((0,T)\times\Omega),\\
&\theta_\epsilon~~\text{is bounded in } L^5((0,T)\times	\Omega),\\
&\log\theta_\epsilon ~~ \text{is bounded in } L^2(0,T:H^1(\Omega))\cap L^q((0,T)\times\Omega).
\end{align*}

Now, for the application of the Div-Curl Lemma we use the same idea as in the previous limit case.

Setting
\begin{align}
U_\epsilon=\bigg[s(\rho_\epsilon,\theta_\epsilon),s(\rho_\epsilon,\theta_\epsilon) u_\epsilon + \frac{\nabla \theta_\epsilon}{\theta_\epsilon}+ \epsilon\big(\log\theta_\epsilon-\log\rho_\epsilon-1\big)\nabla\rho_\epsilon\bigg],~~V_\epsilon=[\theta_\epsilon,0,0,0]
\end{align}
we observe that
\begin{align*}
\dv U_\epsilon=\Sigma_{\epsilon,\delta},~~\text{and } \curl V_\epsilon
\end{align*}
are relatively precompact in $W^{-1,s}(\Omega)$ for $s\in [1,\frac{3}{2})$.
The boundedness of $U_\epsilon$ and $V_\epsilon$ in $L^p((0,T)\times\Omega)$ for some $p>1$ can be shown as follows.

The sequence $\theta_\epsilon$ is bounded in $L^2((0,T)\times\Omega)$ and for the sequence $U_\epsilon$ we use the uniform estimates \eqref{eq 4.64}-\eqref{eq 4.68} and the special structure of $s(\rho_\epsilon,\theta_\epsilon)$ to conclude that it is bounded in $L^p((0,T)\times\Omega)$ for some $p>1$.
Moreover, $\epsilon (\log\theta_\epsilon-\log\rho_\epsilon-1)\to 0$ weakly in $L^p((0,T)\times\Omega)$.

Hence, we obtain that 
\begin{align}
\overline{s(\rho,\theta)\theta}=\overline{s(\rho,\theta)}\theta.
\end{align}
By the monotonicity of the entropy and the weak convergence we conclude that up to a subsequence 
\begin{align}
\theta_\epsilon\to \theta~~\text{a.a. in  } (0,T)\times\Omega.
\end{align}
In addition, we have that the limit temperature is positive a.a. on the set $(0,T)\times\Omega$, more precisely 
\begin{align}
\theta^{-3}\in L^1((0,T)\times\Omega).
\end{align}

Now, we can let $\epsilon\to 0$ in the approximate entropy equation.\\
Using the previous relations we obtain that
\begin{align*}
\frac{\kappa\nabla \theta_\epsilon}{\theta_\epsilon}\to \frac{\kappa\nabla \theta}{\theta}~~\text{weakly in } L^p(0,T;\Omega),
\end{align*}
for some $p>1$.\\
Applying the Div-Curl Lemma once more with $V_\epsilon=[u_\epsilon,0,0,0]$ and $U_\epsilon$ as before, we observe that
\begin{align*}
s(\rho_\epsilon,\theta_\epsilon)u_\epsilon\to \overline{s(\rho,\theta)}u~~\text{weakly in } L^p((0,T)\times\Omega)
\end{align*}
for some $p>1$.\\
The terms appearing in $\sigma_{\epsilon,\delta}$ are weakly lower semi-continuous as established in the previous part. 
Moreover, the $\epsilon$-dependent terms are non-negative.
Hence we can conclude that
\begin{align*}
\Sigma_{\epsilon,\delta}\to \sigma_\delta~~\text{weakly in } \mathcal{M}\big([0,T]\times\overline{\Omega}\big),
\end{align*}
where $\sigma_\delta$ is a positive measure on $[0,T]\times\overline{\Omega}$ satisfying 
\begin{align*}
\sigma_\delta\geq \frac{1}{\theta}\big(\mu|\nabla u|^2+\nu \rho|u|^2+\big(\frac{\kappa}{\theta}+\frac{\delta}{2}\theta^{\Gamma-2}\big)|\nabla \theta|^2\big)+\frac{\delta}{\theta^3}+\delta\theta^5.
\end{align*}

It remains to show that $\overline{s(\rho,\theta)}=s(\rho,\theta)$.
To this end, we have to show the strong convergence in the densities.
We follow the ideas presented in \cite[Chapter 3.6]{Novotny}. 

The first step is to introduce the test function $\varphi(x)$ for the momentum equation, where
\begin{align*}
\varphi(x)=\xi(x) \nabla \Delta^{-1} [\mathbb{1}_\Omega \rho_\epsilon] ~~\text{and  } \xi\in C_c^\infty(\Omega) .
\end{align*}
Taking $\varphi$ as an admissible test function in the Brinkman-type equation \eqref{eq 4.59} yields
\begin{align*}
\int_\Omega \rho\epsilon\theta_\epsilon \dv\varphi dx= \int_\Omega \mu \nabla u_\epsilon :\nabla \varphi +\nu \rho_\epsilon u_\epsilon \cdot\varphi dx
\end{align*}
Taking into account the specific form of the test function and integration by parts yields
\begin{align}\label{eq test func}
\int_\Omega \xi \bigg( \rho_\epsilon\theta_\epsilon \rho_\epsilon -\mu \nabla u_\epsilon : R[\mathbb{1}_\Omega \rho_\epsilon]-\nu \rho_\epsilon u_\epsilon \cdot \nabla \Delta^{-1} [\mathbb{1}_\Omega \rho_\epsilon]\bigg) dx= \sum_{i=1}^2 I_{i,\epsilon},
\end{align}
where
\begin{align*}
I_{1,\epsilon}&=-\int_\Omega \rho_\epsilon \theta_\epsilon \nabla \xi \cdot \nabla\Delta^{-1}[\mathbb{1}_\Omega \rho_\epsilon] dx\\
I_{2,\epsilon}&= \mu \int_\Omega \nabla u_\epsilon : \nabla \xi \otimes \nabla\Delta^{-1}[\mathbb{1}_\Omega \rho_\epsilon]  dx
\end{align*}
and the symbol $R$ denotes the double Riesz transform.\\
Repeating the same argument for the limit of the Brinkman equation \eqref{eq 4.73} with the test function $\varphi$ being 
\begin{align*}
\varphi(x)=\xi(x) \nabla \Delta^{-1} [\mathbb{1}_\Omega \rho]~~\text{and  } \xi\in C_c^\infty(\Omega) .
\end{align*}
yields the following
\begin{align}
\int_\Omega \xi \bigg( \rho\theta \rho -\mu \nabla u : R[\mathbb{1}_\Omega \rho]-\nu \rho u \cdot \nabla \Delta^{-1} [\mathbb{1}_\Omega \rho]\bigg) dx= \sum_{i=1}^2 I_{i},
\end{align}
where
\begin{align*}
I_{1}&=-\int_\Omega \rho\theta \nabla \xi \cdot \nabla\Delta^{-1}[\mathbb{1}_\Omega \rho] dx\\
I_{2}&= \mu \int_\Omega \nabla u : \nabla \xi \otimes \nabla\Delta^{-1}[\mathbb{1}_\Omega \rho]  dx.
\end{align*}
From the previous estimates we recall that 
\begin{align*}
\rho_\epsilon \to \rho ~~\text{ in } C_{weak}([0,T],L^2(\Omega)).
\end{align*}
Taking into account relations (4.64) - (4.77) we observe that the integral $I_{i,\epsilon}$ converges to its counterpart $I_i$ for $i=1,2$ and we infer
\begin{align}
\lim_{\epsilon\to 0} &\int_\Omega \xi \bigg( \rho_\epsilon\theta_\epsilon \rho_\epsilon -\mu \nabla u_\epsilon : R[\mathbb{1}_\Omega \rho_\epsilon]-\nu \rho_\epsilon u_\epsilon \cdot \nabla \Delta^{-1} [\mathbb{1}_\Omega \rho_\epsilon]\bigg) dx\\
&= \int_\Omega \xi \bigg( \rho\theta \rho -\mu \nabla u : R[\mathbb{1}_\Omega \rho]-\nu \rho u \cdot \nabla \Delta^{-1} [\mathbb{1}_\Omega \rho]\bigg) dx
\end{align}
Moreover, the last terms on the left-hand side of the equality converges to the last term on the right-hand side.
The next step is to rewrite the following term 
\begin{align*}
\int_\Omega \xi \mu \nabla u : R[\mathbb{1}_\Omega \rho]dx =\int_\Omega \mu R:[\xi \nabla u]\rho dx
\end{align*}  
where we used the properties of the double Riesz transform \cite[Chapter 11]{Novotny} and we observe that we can write
\begin{align*}
\mu R:[\xi \nabla u]= \mu \xi \dv u +\mu \omega(u),
\end{align*}
where $\omega(u)= R:[\xi \nabla u]- \xi R:[\nabla u]$ is the commutator.
Applying a result by Coifman and Meyer \cite{Coifman} and the previous bounds we obtain that
\begin{align}
\omega(u_\epsilon)\rho_\epsilon\to \overline{\omega(u)}\rho~~\text{weakly in } L^1((0,T)\times \Omega).
\end{align}
This yields
\begin{align*}
\overline{\omega(u)}=\omega(u).
\end{align*}
The proof of the convergence in (4.88) is shown by applying the Div-Curl Lemma to 
\[U_\epsilon=[\rho_\epsilon,\rho_\epsilon u_\epsilon]~~\text{and }V_\epsilon=[\omega(u_\epsilon),0,0,0]\]

Hence we obtain the following weak compactness identity for the effective pressure
\begin{align}
\overline{\rho \theta \rho}-\mu \overline{\rho \dv u}= \overline{\rho \theta}\rho -\mu \rho \dv u.
\end{align}
The final step is to multiply the continuity equation on $G'(\rho_\epsilon)$, with $G$ being a smooth and convex function. 
Then as $\epsilon \to 0$ we get
\begin{align*}
\int_\Omega \overline{G(\rho)}(t) dx+\int_0^t\int_\Omega \overline{\big(G'(\rho)\rho-G(\rho)\big)\dv u}dx dt\leq \int_\Omega G(\rho_0)dx 
\end{align*}
for all $t\in (0,T)$ and we deduce that
\begin{align*}
\int_\Omega  \overline{\rho \log \rho}(t) dx+\int_0^t\int_\Omega \overline{\rho \dv u}dx dt = \int_\Omega \rho_0 \log \rho_0 dx. 
\end{align*}
Via the theory of renormalized solutions by DiPerna and Lions \cite{DiPerna} we obtain 
\begin{align*}
\int_\Omega  \rho \log \rho(t) dx+\int_0^t\int_\Omega \rho \dv u dx dt \leq \int_\Omega \rho_0 \log \rho_0 dx. 
\end{align*}
Hence we obtain
\begin{align}
 \overline{\rho \theta \rho} \geq  \overline{\rho \theta}\rho
\end{align}
and as a consequence of of equation (4.89) 
\begin{align}
\overline{ \rho \dv u}\geq \rho \dv u.
\end{align}
Combining both estimates implies
\begin{align}
\overline{\rho \log \rho}=\rho \log \rho
\end{align}
which yields the desired strong convergence of the density as the function $x\log x$ is convex, i.e.
\begin{align}\label{eq strong rho}
\rho_\epsilon\to \rho ~~\text{ a.a. in } (0,T)\times \Omega.
\end{align}
This allows us to identify $\overline{s(\rho,\theta)}=s(\rho,\theta)$.

Having eliminated the $\epsilon$-dependent terms, we summarize the results.
For any $\delta>0$ we have constructed a trio $(\rho,u,\theta)$ solving the following equations.\\

The \textbf{continuity equation}
\begin{align}\label{eq 4.84}
\partial_t\rho + \dv(\rho u)=0~~\text{in } (0,T)\times\Omega,
\end{align}
with homogeneous Neumann boundary condition
\begin{align}
\nabla \rho \cdot n=0 ~~\text{on } (0,T)\times \partial\Omega,
\end{align}
and the initial condition
\begin{align}
\rho(0,\cdot)=\rho_{0,\delta}.
\end{align}

The \textbf{Brinkman-type equation}
\begin{align}
\int_\Omega \rho \theta \dv \phi dx= \int_\Omega \mu \nabla u :\nabla \phi dx+\int_\Omega \nu\rho u\cdot \phi dx,
\end{align}
for any test function $\phi \in C_c^\infty(\overline{\Omega})$ with
\begin{align}
\phi =0 ~~\text{on } \partial\Omega~~\text{ no-slip boundary conditions.}
\end{align}

The \textbf{approximate internal energy balance}
\begin{align}
\int_\Omega \rho\theta(t)d x=\int_\Omega \rho_{0,\delta}\theta_{0,\delta} d x+\int_0^t\int_\Omega \frac{\delta}{\theta^2}-\delta\theta^5\,d x d \tau.
\end{align}
for all $t\in [0,T]$.

The \textbf{approximate entropy inequality}
\begin{align}\begin{split}
&\int_0^T\int_\Omega s(\rho,\theta)\big(\partial_t\phi+u\cdot\nabla \phi\big)dxdt+\int_0^T\int_\Omega \frac{\nabla \theta}{\theta} \cdot \nabla \phi dxdt\\
&+\langle\sigma_\delta,\phi\rangle_{\mathcal{M}([0,T]\times\overline{\Omega}]}= -\int_\Omega s(\rho_{\delta,0},\theta_{\delta,0})\phi(0,\cdot)dx\end{split}
\end{align} 
for all $\phi\in C_c^\infty([0,T)\times\overline{\Omega})$ where
\begin{align}\label{eq 4.91}
\sigma_\delta\geq \frac{1}{\theta}\bigg(\mu|\nabla u|^2+\nu \rho|u|^2+\big(\frac{1}{\theta}+\frac{\delta}{2}\theta^{\Gamma-1}\big)|\nabla \theta|^2+\frac{\delta}{\theta^2}+\delta\theta^5\bigg).
\end{align}

\subsubsection{Limit $\delta\to 0$}
\label{d limit}
The last step in this proof is to let $\delta\to 0$.
To this end, let $(\rho_\delta,u_\delta,\theta_\delta)$ be a solution to the approximate system \eqref{eq 4.84}-\eqref{eq 4.91}.
We recall that the total mass of the system is conserved, i.e.
\begin{align}
\int_\Omega \rho_\delta(t,\cdot)dx =\int_\Omega \rho_{0,\delta}dx~~\text{for any } t\in [0,T].
\end{align}
We assume that 
\begin{align}
\rho_{0,\delta}\to \rho_0~~\text{in } L^1(\Omega),
\end{align}
and thus the bound is uniform for $\delta\to 0$.

Next, we apply the reverse Young's inequality to the energy balance equation
\begin{align}
\int_\Omega \frac{1}{S(\rho/\theta)^2}\frac{1}{4}(\rho^2+\theta^2)(t) dx \leq \int_\Omega \rho\theta(t) dx=\int_\Omega\rho_{0,\delta}\theta_{0,\delta}dx +\int_0^t\int_\Omega \frac{\delta}{\theta^2}-\delta\theta^5 dxd\tau,
\end{align}
where $S(h)=S(1/h)>0$ for $h>0$ is the Specht radius \cite{Tominaga}.
Since $S(h)\to \infty$ as $h\to 0$ we need the ratio $\rho/\theta$ to be bounded from below and above.
This follows from the fact that we chose positive initial data $\rho_0$ and $\theta_0$ and from the previous sections.
As a consequence the solution may exist only for a short time $T^*>0$.
 
Next, is the dissipation balance 
\begin{align}
\int_\Omega H_{\overline{\theta}}(\rho,\theta)(t) dx +\overline{\theta}\sigma_\delta\big[[0,t]\times\overline{\Omega}\big]=\int_\Omega  H_{\overline{\theta}}(\rho_{0,\delta},\theta_{0,\delta})dx +\int_0^t\int_\Omega\frac{\delta}{\theta^2}+\delta\theta^4 dxd\tau
\end{align}
satisfied for a.a. $t\in [0,T]$. 
Noting that the terms $\delta/\theta^2$ and $\delta\theta^4$ are absorbed in the entropy production $\sigma_\delta$ and the uniform bounds
\begin{align}
\int_\Omega  H_{\overline{\theta}}(\rho_{0,\delta},\theta_{0,\delta})dx\leq c~~\text{uniformly for } \delta\to 0
\end{align}
hold, we obtain the following uniform estimates depending only on the initial data:
\begin{align}\label{eq 4.97}
\essup_{t\in (0,T)}\|\rho_\delta(t)\|_{L^2(\Omega)}&\leq c,\\
\essup_{t\in (0,T)}\|\theta_\delta(t)\|_{L^2(\Omega)}&\leq c,\\\label{eq 4.99}
\sigma_\delta\big[[0,t]\times\overline{\Omega}\big]&\leq c.
\end{align}have that
\begin{align}
&\int_0^T\int_\Omega |\nabla \log \theta_\delta|^2 dxdt\leq c,\\
&\int_0^T\int_\Omega \frac{|\nabla u|^2+\rho |u|^2}{\theta_\delta}dxdt \leq c,\\
&\delta \int_0^T\int_\Omega \frac{1}{\theta_\delta^3}+\theta^5 dxdt\leq c,\\
&\delta \int_0^T\int_\Omega \theta^{\Gamma-2}|\nabla \theta_\delta|^2 dxdt\leq c.
\end{align}
Using that
\begin{align*}
\nabla \log \theta_\delta~~\text{bounded in }  L^2((0,T)\times \Omega)
\end{align*}
we obtain that
\begin{align}\label{eq 4.104}
\nabla \theta_\delta~~ \text{bounded in }  L^2((0,T)\times \Omega).
\end{align}
Moreover applying Lemma 3.1 and Proposition 3.2 we have that
\begin{align}
\log \theta_\delta~~\text{bounded in } L^2(0,T;H^1(\Omega)).
\end{align}
Combining the above estimates, we see that
\begin{align}
\theta_\delta ~~\text{is bounded in } L^2(0,T;H^1(\Omega))
\end{align}
and especially by the Sobolev embedding
\begin{align}\label{eq 4.107}
\theta_\delta ~~\text{is bounded in } L^2(0,T;L^6(\Omega)).
\end{align}
Next, we estimate
\begin{align}
\int_\Omega |\nabla u_\delta|^{3/2}=\int_\Omega\frac{|\nabla u_\delta|^{3/2} }{\theta_\delta^{3/4}}\theta_\delta^{3/4}\leq c\int_\Omega \frac{|\nabla u_\delta|^2}{\theta}+c\int_\Omega \theta^4_\delta\leq c.
\end{align}
Using the assumption that $\rho_0\in L^3(\Omega)$ and testing the continuity equation with $\rho^2$ we obtain 
\begin{align*}
&\int_\Omega \partial_t\rho \rho^2 dx +\int_\Omega \dv(\rho u)\rho^2dx =0.
\intertext{Integrating by parts yields}
&\frac{1}{3}\frac{d}{dt}\int_\Omega \rho^3 dx \leq \frac{2}{3}\int_\Omega\rho^3 |\dv u|\, dx.
\end{align*}
Thus, we obtain
\begin{align}
\frac{d}{dt}\|\rho_\delta\|_{L^3(\Omega)}^3\leq c\big( \|\rho_\delta\|_{L^3(\Omega)}^3\big)^3 +c\|\nabla u_\delta\|_{L^{3/2}(\Omega)}^{3/2},
\end{align}
where we used an inverse type of the Jensen inequality \cite{Takahasi}.
Note, this inequality requires lower and upper bounds on the density $\rho_\delta$ almost everywhere.
Thus, we have an ordinary differential equation of the type
\begin{align*}
x'\leq c_1 x^3 +C_2.
\end{align*}
This is an ODE with Lipschitz right-hand side and by Picarc-Lindeloeff Theorem the solution exists for a small time $T$. 
Hence,
\begin{align}
\rho_\delta~~\text{is bounded in } L^\infty(0,T;L^3(\Omega).
\end{align}
Now, we can conclude from the Brinkman equation that
\begin{align*}
\|\rho u^2\|_{L^1(\Omega)}+\|\nabla u_\delta\|^2_{L^2(\Omega)}\leq\int_\Omega \rho_\delta^2\theta^2_\delta \, dx \leq  c\|\rho_\delta\|_{L^3(\Omega)}^3+\|\theta_\delta\|^6_{L^6(\Omega)},
\end{align*}
where we used that $\rho$ is non-negative at least for a small time $T^*$.
And thus by the Poincare inequality
\begin{align}
u_\delta~~\text{is bounded in } H^1_0(\Omega).
\end{align}
We remark that by repeating the previous two steps we obtain that 
\begin{align}
\rho_\delta~~\text{is bounded in } L^\infty(0,T;L^p(\Omega)
\end{align}
for some $p>2$ at least for a small time $T>0$.\\

Now, we have all the necessary uniform estimates together in order to pass to the limit in the equations.

For the continuity equation we get
\begin{align}
\rho_\delta\to \rho ~~\text{weakly in } L^p((0,T)\times\Omega)
\end{align}
for some $p\geq 2$ and similarly
\begin{align}
\rho_\delta u_\delta\to \rho u ~~\text{weakly in } L^p((0,T)\times\Omega)
\end{align}
for some $p>1$.

After passing to the limit in the continuity equation as $\delta\to 0$ the limit satisfies the integral identity
\begin{align}
\int_0^T\int_\Omega\big(\rho \partial_t\phi + \rho u\cdot \nabla \phi\big) dxdt+\int_\Omega \rho_0dx =0
\end{align}
for any test function $\phi \in C_c^\infty([0,T)\times\overline{\Omega})$ i.e. $\rho,\, u$ satisfy in the following equation in the sense of distributions
\begin{align}
\partial_t\rho +\dv(\rho u)=0.
\end{align}
For the Stokes equation we have
\begin{align}
u_\delta\to u ~~\text{weakly in } H^1(\Omega)
\end{align}
and 
\begin{align}\label{eq 4.118}
\rho_\delta\theta_\delta\to \overline{\rho \theta}~~\text{weakly in } L^p((0,T)\times\Omega)
\end{align}
for some $p>1$.
Thus the limit satisfies
\begin{align}
\int_\Omega \nu \rho u\cdot\varphi +\int_\Omega \mu \nabla u :\nabla \varphi dx+\int_\Omega \rho \theta \dv\varphi dx =0,
\end{align}
for any $\varphi \in C_c^\infty(\overline{\Omega})$ with $\varphi|_{\partial\Omega}=0$ or in the sense of distributions
\begin{align}
-\nu\rho u +\mu \Delta u=\nabla(\rho \theta).
\end{align} 
Before passing to the limit in the approximate entropy equation we need to show the pointwise convergence of the temperature.
Again as in the previous sections the idea is to apply the Div-Curl Lemma.
Setting
\begin{align}
U_\delta=\bigg[s(\rho_\delta,\theta_\delta),s(\rho_\delta,\theta_\delta) u_\delta + \frac{\nabla \theta_\delta}{\theta_\delta}],~~V_\delta=[\theta_\delta,0,0,0]
\end{align}
we observe that
\begin{align*}
\dv U_\delta=\sigma_{\delta},~~\text{and } \curl V_\delta
\end{align*}
are relatively precompact in $W^{-1,s}(\Omega)$ for $s\in [1,\frac{3}{2})$.\\
Indeed,using equations \eqref{eq 4.104}-\eqref{eq 4.107} we see that 
\begin{align*}
\delta\theta^{\Gamma-1}_\delta\nabla \theta_\delta=\delta\frac{\Gamma}{2}\theta_\delta^{1/4}\theta_\delta^{\Gamma/2-1/4}\nabla \theta_\delta^{\Gamma/2}.
\end{align*}
Hence, we can conclude
\begin{align*}
\delta\theta^{\Gamma-1}_\delta\nabla \theta_\delta\to 0 ~~\text{in } L^p((0,T)\times \Omega) 
\end{align*}
as $\delta\to 0$ for a certain $p>1$.
In addition, since $\theta_\delta>0$ for a.a. $(t,x)$ we obtain
\begin{align*}
\delta\frac{1}{\theta_\delta^3}\to 0~~ \text{in } L^1((0,T)\times \Omega) 
\end{align*}
as $\delta\to 0$.\\

The boundedness of $U_\delta$ in $L^p((0,T)\times\Omega)$ for some $p>1$ can be shown as follows.\\
For the sequence $U_\delta$ we use the uniform estimates \eqref{eq 4.97}-\eqref{eq 4.99} and the special structure of $s(\rho_\delta,\theta_\delta)$ to conclude that it is bounded in $L^p((0,T)\times\Omega)$ for some $p>1$.
Hence, we obtain that 
\begin{align}
\overline{s(\rho,\theta)\theta}=\overline{s(\rho,\theta)}\theta.
\end{align}
By the monotonicity of the entropy and the weak convergence we conclude that up to a subsequence 
\begin{align}
\theta_\delta\to \theta~~\text{a.a. in }  (0,T)\times\Omega.
\end{align}
In addition, we have that the limit temperature is positive a.a. on the set $(0,T)\times\Omega$.

It remains to show that $\overline{s(\rho,\theta)}=s(\rho,\theta)$.
We proceed as in the previous section by showing the strong convergence of the density. 
From the bounds obtained in equations \eqref{eq 4.97}-\eqref{eq 4.107} we see that the methods from the $\epsilon$-limit can be applied in this setting too, cf. equations \eqref{eq test func}-\eqref{eq strong rho}.

Using the weak lower semi-continuity of convex functionals, we can let $\delta\to 0$ in the approximate entropy balance to conclude that
\begin{align}\begin{split}
&\int_0^T\int_\Omega \overline{s(\rho,\theta)}\big(\partial_t\phi +u\cdot \nabla \phi\big) dxdt-\int_0^T\int_\Omega \frac{\kappa \nabla \theta}{\theta}\cdot \nabla \phi dxdt\\
 &+\langle\sigma;\phi\rangle_{[\mathcal{M};C]([0,T]\times\overline{\Omega})}=-\int_\Omega s(\rho_0,\theta_0)dx	\end{split}
\end{align}
for any $\phi\in C_c^\infty([0,T]\times\overline{\Omega})$.
Here $\sigma\in \mathcal{M}^+([0,T]\times\overline{\Omega})$ is a weak-*-limit in the space of measures $ \mathcal{M}([0,T]\times\overline{\Omega})$ of the sequence $\sigma_\delta$.
Using the lower weak semi-continuity of convex functionals and the fact that all $\delta$-dependent terms in the entropy production rate are non-negative, we obtain that
\begin{align}
\sigma\geq \frac{1}{\theta}\big(\mu|\nabla u|^2+\nu \rho|u|^2+\frac{|\nabla \theta|^2}{\theta}\big).
\end{align}
The last step is to take the limit in the internal energy balance.
By equation \eqref{eq 4.118} we can pass to the limit and obtain
\begin{align}
\int_\Omega \rho\theta (t)dx =\int_\Omega \rho_0\theta_0 dx~~\text{for a.a. } t\in [0,T].
\end{align}

This completes the proof of the theorem.

\subsection{Higher regularity}
\label{higher reg}
In the proof of Theorem \ref{theorem 2}  we have noted that the weak solutions constructed by the approximate scheme satisfy better regularity and integrability properties.
\begin{theorem}[Regularity of weak solutions]\label{theorem 4}
Let $\Omega\subset	\mathbb{R}^n$, where $n=2,\,3$, be a bounded Lipschitz domain.
Assume that the initial data $\rho_0$, $E_0$ and $s_0$ satisfy the hypothesis of Theorem 4.1.\\
Then, in addition to the minimal regularity assumptions required in equations (3.2), (3.6), (3.10), (3.14)-(3.16), there holds:
\begin{enumerate}
\item[i)]The weak solution satisfies
\begin{align}
&\rho\in C_{weak}([0,T];L^3(\Omega))\cap C([0,T];L^1(\Omega)),\\
&u\in H^1_0(\Omega),\\
&\theta\in L^2(0,T;H^1(\Omega))\cap L^\infty(0,T:l^2(\Omega)),\\
&\log\theta\in L^2(0,T;H^1(\Omega)),
\end{align}
\item[ii)]The entropy satisfies
\begin{align}
ess \lim_{t\to 0^{+}}\int_\Omega s(\rho,\theta)(t,\cdot)\phi \,d x \geq \int_\Omega s(\rho_0,\theta_0)\phi \,d x~~\text{for any } \phi	\in C_c^\infty(\overline{\Omega}),~\phi\geq 0.
\end{align}
If in addition, $\theta_0\in W^{1,\infty}(\Omega)$ then
\begin{align}
ess \lim_{t\to 0^+}\int_\Omega s(\rho,\theta)(t,\cdot)\phi\,d x= \int_\Omega s(\rho_0,\theta_0)\phi \,d x~~\text{for all } \phi	\in C_c^\infty(\overline{\Omega}).
\end{align} 
\end{enumerate}
\end{theorem}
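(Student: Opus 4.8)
The plan is to read off part~(i) directly from the uniform-in-$\delta$ bounds already produced in the last limit passage of the proof of Theorem~\ref{theorem 2}, and to establish part~(ii) by testing the limiting entropy inequality with time cut-offs that shrink to $t=0$ and exploiting the non-negativity of the entropy production measure $\sigma$.

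For part~(i): in the $\delta\to0$ step we obtained the bounds \eqref{eq 4.97}--\eqref{eq 4.107}. In particular $\rho_\delta$ is bounded in $L^\infty(0,T;L^3(\Omega))$ (via the Picard--Lindel\"of argument applied to $\frac{d}{dt}\|\rho_\delta\|_{L^3}^3$), $u_\delta(t,\cdot)$ is bounded in $H^1_0(\Omega)$ for a.a.\ $t$ with the time integrability coming from the Brinkman estimate, $\theta_\delta$ is bounded in $L^2(0,T;H^1(\Omega))\cap L^\infty(0,T;L^2(\Omega))$ by \eqref{eq 4.98}--\eqref{eq 4.107}, and $\log\theta_\delta$ is bounded in $L^2(0,T;H^1(\Omega))$ by the entropy dissipation together with Lemma~\ref{lemma 3} and Corollary~\ref{prop 1}. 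Each bound is stable under the weak, respectively weak-$*$, limit by lower semicontinuity of the norm, which yields the stated memberships. The statement $\rho\in C_{weak}([0,T];L^3(\Omega))$ follows because $\partial_t\rho=-\dv(\rho u)$ lies in a space $L^{p}(0,T;W^{-1,q}(\Omega))$, so $\rho$ has a representative weakly continuous in time into $L^3$; the strong statement $\rho\in C([0,T];L^1(\Omega))$ comes from the DiPerna--Lions renormalized-solution theory already used in the $\epsilon$-limit.

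For part~(ii): fix $\varphi\in C_c^\infty(\overline\Omega)$ with $\varphi\geq0$ and insert $\phi(t,x)=\psi_\eta(t)\varphi(x)$ in the limiting entropy identity, where $\psi_\eta$ is Lipschitz, equal to $1$ on $[0,\eta]$, affine on $[\eta,2\eta]$, and zero afterwards. Since $\sigma\in\mathcal{M}^+([0,T]\times\overline\Omega)$ we have $\langle\sigma,\psi_\eta\varphi\rangle\geq0$; dropping this term and letting $\eta\to0^+$, the transport and $\nabla\theta/\theta$ contributions vanish, while $\int_0^T\int_\Omega s(\rho,\theta)\psi_\eta'\varphi\,dxdt$ converges to minus the essential one-sided limit of $t\mapsto\int_\Omega s(\rho,\theta)(t)\varphi\,dx$, which exists because that function differs by the monotone map $t\mapsto\sigma([0,t]\times\overline\Omega)$ from a function of bounded variation. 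Comparing with the initial datum gives the one-sided inequality. For the equality under the hypothesis $\theta_0\in W^{1,\infty}(\Omega)$, one propagates this regularity to a short time interval uniformly in $\delta$ and sharpens the parabolic estimate of Lemma~\ref{lemma 6} with the extra integrability, so that $\sigma_\delta([0,\eta]\times\overline\Omega)\to0$ as $\eta\to0^+$ uniformly in $\delta$; passing to the limit, $\sigma(\{0\}\times\overline\Omega)=0$, hence no entropy is produced instantaneously and the one-sided limit equals $\int_\Omega s(\rho_0,\theta_0)\varphi\,dx$ for test functions of either sign.

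The main obstacle is the equality case in (ii): showing that $\sigma$ places no mass on the slice $\{t=0\}$. This forces one to turn the qualitative bound $\theta_0\in W^{1,\infty}(\Omega)$ into a quantitative, $\delta$-uniform estimate of the form $\int_0^\eta\int_\Omega\big(\theta^{-1}|\nabla\theta|^2+\theta^{-2}|\nabla\theta|^2+\theta^{-1}|\nabla u|^2\big)\,dxdt\to0$ as $\eta\to0$, which has to be tracked through the three limit passages $n\to\infty$, $\epsilon\to0$, $\delta\to0$ and through the a priori only short time interval on which the approximate scheme is under control.
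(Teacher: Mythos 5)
Part (i) and the inequality in part (ii) of your argument are fine and coincide with the route the paper takes: the paper's proof simply reads the integrability off the uniform estimates of the existence proof of Theorem \ref{theorem 2}, invokes the DiPerna--Lions renormalized-solution theory for $\rho\in C([0,T];L^1(\Omega))$, and for the instantaneous entropy values refers to step 3 of the proof of Theorem 3.2 in \cite{Novotny}; your time cut-off $\psi_\eta$ argument together with $\sigma\geq 0$ is exactly the standard way that inequality is obtained, and your reduction of the equality case to the statement $\sigma(\{0\}\times\overline{\Omega})=0$ is also correct, since testing the limiting entropy identity with $\psi_\eta\varphi$ gives $\operatorname*{ess\,lim}_{t\to0^+}\int_\Omega s(\rho,\theta)(t)\varphi\,dx=\int_\Omega s(\rho_0,\theta_0)\varphi\,dx+\int_{\{0\}\times\overline{\Omega}}\varphi\,d\sigma$ for test functions of either sign.

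The genuine gap is the mechanism you propose for proving $\sigma(\{0\}\times\overline{\Omega})=0$. Propagating $\theta_0\in W^{1,\infty}(\Omega)$ through the approximate scheme and sharpening the parabolic estimate of Lemma \ref{lemma 6} \emph{uniformly in} $n,\epsilon,\delta$ is not available: the constant in \eqref{eq 4.21} depends on $\|u\|_{X_n}$, on $\inf\rho$ and implicitly on the regularizing terms $\delta\theta^{-2}$, $\delta\theta^5$ and $\epsilon\Delta\rho$, and these bounds blow up precisely as the approximation parameters are removed -- if such uniform parabolic regularity survived the three limit passages one would obtain strong solutions, which is exactly what the weak-solution machinery is designed to avoid. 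The argument the paper actually relies on (step 3 of Theorem 3.2 in \cite{Novotny}) works at the level of the limit system and needs no uniform regularity: one uses the conservation of the internal energy $\int_\Omega\rho\theta(t)\,dx=\int_\Omega\rho_0\theta_0\,dx$ together with the entropy inequality tested with the Lipschitz function $\theta_0(x)$ itself (this is where $\theta_0\in W^{1,\infty}(\Omega)$ enters, making $\theta_0$ an admissible multiplier), i.e.\ a ballistic free energy $H_{\theta_0}(\rho,\theta)=\rho\theta-\theta_0\,s(\rho,\theta)$ comparison; the convexity of $H_{\theta_0}$ in $(\rho,\theta)$ then forces $\liminf_{t\to0^+}\int_\Omega H_{\theta_0}(\rho,\theta)(t)\,dx\geq\int_\Omega H_{\theta_0}(\rho_0,\theta_0)\,dx$ with equality, which simultaneously yields $\sigma([0,t]\times\overline{\Omega})\to0$ as $t\to0^+$ and the strong attainment of the initial data, hence the desired equality. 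Without replacing your uniform-regularity step by an argument of this type, the second half of part (ii) remains unproven; as written, your proposal only establishes part (i) and the one-sided inequality.
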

\begin{proof}
The integrability properties follow directly from the proof of the existence of weak solutions.

The strong continuity of the density is a general property of the transport equation in the context of renormalized solutions.

The last part of the proof follows step 3 in the proof of Theorem 3.2 in \cite{Novotny}.
\end{proof}

\section{Conclusion and Remarks}
\label{conclusion}
In this section we conclude with several remarks.\\

The first one is that in the first part of this paper we showed how to apply the energetic variational approach \cite{Lin} and \cite{Hyon} in the setting of fluid mechanics and combine it with the temperature in a natural way.
This leads to the general framework of the free energy as starting point for the thermodynamics of fluids.
With the choice of free energy (in terms of temperature and the
state variable, the phase field variable here), and the entropy production,
as well as the kinematics/transport of these variables, one should be
able to uniquely determine the system.

Second, we observe the importance of the Laplacian in the velocity term.
To this end , we recall the following estimates obtained from the continuity and momentum equation and the entropy production rate.
\begin{align}\label{eq 5.1}
\int_\Omega \nu\rho|u|^2 +\int_\Omega \mu |\nabla u|^2 &= \int_\Omega \rho \theta \dv u\\\label{eq 5.2}
\frac{1}{\gamma+1}\frac{d}{dt}\int_\Omega\rho^{\gamma+1}+\frac{\gamma}{\gamma+1}\int_\Omega\rho^{\gamma+1}\dv u&=0\\\label{eq 5.3}
\int_\Omega\frac{1}{\theta}\big(\mu |\nabla u|^2 +\nu \rho |u|^2\big) &\leq c
\end{align}
We note that equation \eqref{eq 5.3} gives a uniform bound on $\frac{\mu |\nabla u|^2}{\theta}$ and this bound is then used in equation \eqref{eq 5.1} and \eqref{eq 5.2} to obtain further estimates.
If we were to let the parameter $\mu$ go to $0$ we would loose these bounds, i.e. the control of the gradient of $u$ and thus we cannot achieve the ultimate goal to consider the ideal gas under a Darcy-type law.

Next, we note that adding a memory/ evolutionary term to the Brinkman-type equation, i.e. $(\rho u)_t$ does not change analysis of the model.
In addition, as noted earlier in Remark \ref{remark 9}, if we had a kinetic term in the total energy we would obtain the incompressible Navier-Stokes-Fourier system for which the existence of weak solutions for the ideal gas case is still open \cite{Novotny} and \cite{Sun}.
Thus our result gives in a sense a  "lower limit" existence result of the full compressible Navier-Stokes-Fourier system, where the difference and crucial aspect is the additional nonlinear term in the momentum equation.
 
Finally, we want to remark that for a similar system of equations with only Darcy-type dissipation we are able to show the well-posedness of the system in a critical Besov space \cite{Sulzbach21}. The difference in these two approaches is that the first one uses energy methods for finding the weak solution, whereas the second one uses scaling arguments and the algebra structure provided by the critical Besov space. 

\section*{Acknowledgments}
The authors would like to thank Prof. Anja Schl{\"o}merkemper for constructive suggestions and discussions.
This research was supported in part by the National Science Foundation Grant NSF DMS–1714401 and the United States–Israel Binational Science Foundation Grant BSF 2024246.  

\begin{appendix}
\section{Appendix}

The following result stating the weak convergence of a product of functions is due to \cite{Tartar}.
\begin{theorem}[Div-Curl Lemma]
Let $Q\subset \mathbb{R}^n$ be an open set. 
Assume 
\begin{align*}
U_n\to U ~~\text{weakly in } L^p(Q),\\
V_n\to V~~\text{weakly in } L^q(Q),
\end{align*}
where
\begin{align*}
\frac{1}{p}+\frac{1}{q}=\frac{1}{r}<1.
\end{align*}
In addition, let 
\begin{align*}
\dv U_n \equiv \nabla \cdot U_n~~\text{precompact in } W^{-1,s}(Q),\\
\curl V_n\equiv \nabla V_n-\nabla^T V_n~~\text{precompact in } W^{-1,s}(Q),
\end{align*}
for some $s>1$.
Then
\begin{align*}
U_n\cdot V_n\to U\cdot V~~\text{wekly in } L^r(Q).
\end{align*}
\end{theorem}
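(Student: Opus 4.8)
The plan is to run the classical Murat--Tartar argument, reducing everything to a scalar test. It suffices to show that for every $\varphi\in C_c^\infty(Q)$ one has $\int_Q \varphi\, U_n\cdot V_n\,dx\to\int_Q \varphi\, U\cdot V\,dx$, and then by picking a cut-off $\chi\in C_c^\infty(Q)$ with $\chi\equiv 1$ on $\supp\varphi$ and replacing $U_n$ by $\tilde U_n=\chi U_n$ (extended by zero) we may work on $\mathbb{R}^n$. Here $\dv\tilde U_n=\chi\,\dv U_n+\nabla\chi\cdot U_n$ is still precompact in $W^{-1,s}$, the second term being bounded in $L^p$ and hence precompact in $W^{-1,s}$; since precompactness in $W^{-1,s}$ of a set of functions supported in a fixed ball persists for every smaller exponent, we are free to assume $s$ as small as convenient below.

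First I would perform the Helmholtz decomposition $\tilde U_n=\nabla g_n+Z_n$ with $g_n=\Delta^{-1}\dv\tilde U_n$ and $\dv Z_n=0$. Since $\nabla g_n=\nabla\Delta^{-1}\dv\tilde U_n$ is a matrix of Riesz transforms of $\tilde U_n$, it is bounded in $L^p$; since $\Delta g_n=\dv\tilde U_n$ is precompact in $W^{-1,s}$ and $\nabla\Delta^{-1}\colon W^{-1,s}\to L^s_{loc}$ is bounded, $\nabla g_n$ is precompact in $L^s_{loc}$. Interpolating the $L^p$-bound with the $L^s_{loc}$-precompactness (using $q'<p$, which is exactly $\tfrac1p+\tfrac1q<1$, and $s<q'$) yields precompactness of $\nabla g_n$ in $L^{q'}_{loc}$. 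Hence $\int\varphi\,\nabla g_n\cdot V_n\to\int\varphi\,\nabla g\cdot V$ by weak--strong convergence ($V_n\rightharpoonup V$ in $L^q$ against $\nabla g_n\to\nabla g$ in $L^{q'}_{loc}$), where $g$ is the weak $W^{1,p}_{loc}$-limit of $g_n$.

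For the solenoidal part I would introduce the antisymmetric potential $B_n^{ij}=\partial_j\Delta^{-1}Z_n^i-\partial_i\Delta^{-1}Z_n^j$, which satisfies $\partial_j B_n^{ij}=Z_n^i$ (using $\dv Z_n=0$) and is bounded in $W^{1,p}_{loc}$ (its gradient is again a Riesz transform of $Z_n$). Integrating by parts and antisymmetrizing,
\begin{align*}
\int_{\mathbb{R}^n}\varphi\, Z_n\cdot V_n\,dx=-\int_{\mathbb{R}^n} B_n^{ij}\,\partial_j\varphi\, V_n^i\,dx+\tfrac12\int_{\mathbb{R}^n}\varphi\, B_n^{ij}\,(\curl V_n)_{ij}\,dx,
\end{align*}
summed over $i,j$. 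In the first term $B_n$ is precompact in $L^{q'}_{loc}$ by Rellich--Kondrachov, so it converges by weak--strong against $V_n\rightharpoonup V$. In the second term I would pick $t\in[p',q)$ (possible precisely because $p'<q$) and note that, interpolating the hypothesis with the $W^{-1,q}$-bound coming from $V_n\in L^q$, $\curl V_n$ is precompact, hence strongly convergent to $\curl V$, in $W^{-1,t}$; since $\varphi B_n$ is bounded in $W^{1,p}\hookrightarrow W^{1,t'}$ and converges weakly there, the pairing converges. Collecting the pieces shows $\int\varphi\,U_n\cdot V_n$ converges along the subsequence; undoing the integrations by parts in the limit identifies the limit as $\int\varphi\,\tilde U\cdot V=\int\varphi\,U\cdot V$ independently of the subsequence, so the whole sequence converges, and passing to $L^r$ (with $1/r=1/p+1/q$) by density of $C_c^\infty$ finishes the proof.

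The main obstacle is the exponent bookkeeping: one must verify that interpolating ``precompact in $W^{-1,s}$ (or $L^s_{loc}$)'' against ``bounded in $W^{-1,q}$ (or $L^p$)'' lands in function spaces in duality with where the potentials $g_n$ and $B_n$ live, and this is exactly the point where the strict inequality $\tfrac1p+\tfrac1q<1$ is used (via $q'<p$ and $p'<q$). A secondary technical nuisance is making the localization rigorous, i.e. controlling the commutator terms generated by the cut-off $\chi$ and checking they stay precompact in the relevant negative Sobolev space.
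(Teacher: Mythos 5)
The paper does not actually prove this statement: the Div--Curl Lemma appears in the appendix purely as a quoted background result, attributed to Tartar (and used via the Feireisl--Novotn\'y framework), so there is no in-paper argument to compare yours against. Your proposal is the classical Murat--Tartar proof (localization, Helmholtz splitting of $U_n$ into $\nabla g_n + Z_n$ with $g_n=\Delta^{-1}\dv \tilde U_n$, the antisymmetric potential $B_n$ for the solenoidal part, and weak--strong pairings), which is essentially the proof given in the literature the paper cites, and your exponent bookkeeping is right: $q'<p$ and $p'<q$ are exactly the strict inequality $\tfrac1p+\tfrac1q<1$, the reduction to small $s$ on a fixed ball is legitimate since $W^{-1,s}$-precompactness of compactly supported distributions persists under lowering $s$, and $q'<p^{*}$ holds automatically so Rellich applies to $B_n$. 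Two points deserve more care than your sketch gives them, though neither is a genuine gap: (i) the step ``bounded in $W^{-1,q}$ plus precompact in $W^{-1,s}$ implies precompact in $W^{-1,t}$ for $s<t<q$'' is Murat's interpolation lemma (after cutting off to a fixed ball), not a plain H\"older interpolation as in the $L^p$ case, and it should be invoked as such; (ii) $\Delta^{-1}$ applied to $Z_n$, which is merely $L^p(\mathbb{R}^n)$ without compact support, needs a proper definition (e.g.\ define $\nabla\Delta^{-1}$ and $B_n^{ij}$ through Fourier multipliers/Riesz transforms acting on $\tilde U_n$ directly, so that only the globally bounded operators $\partial_i\partial_j\Delta^{-1}$ and locally bounded Riesz potentials ever act), after which the $W^{1,p}_{loc}$ bound and the identity $\partial_j B_n^{ij}=Z_n^i$ are rigorous. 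With those two standard repairs your argument is complete and coincides with the textbook proof behind the paper's citation.
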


\begin{theorem}[Reverse Young's inequality \cite{Tominaga}]
Let $a$, $b$ be positive real numbers and let $\nu \in [0,1]$.
Then 
\begin{align*}
S\big(\frac{a}{b}\big)a^{1-\nu}b^\nu\geq (1-\nu)a+\nu b,
\end{align*}
where the constant $S(h)$ is called the Specht ratio and is defined by
\begin{align*}
S(h)=S(1/h)=\frac{h^{\frac{1}{h-1}}}{e\log h^{\frac{1}{h-1}}}~~h\neq 1,~\text{for } h>0.
\end{align*}
\end{theorem}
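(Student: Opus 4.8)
The statement is the scalar reverse Young (Specht-ratio) inequality, so the natural plan is to reduce it to the maximization of a single concave function of $\nu$ and then match the maximal value against $\log S$. First I would exploit homogeneity: dividing the claimed inequality by $a^{1-\nu}b^\nu>0$ turns it into $S(a/b)\geq e^{\psi(\nu)}$, where $\psi(\nu):=\log\big((1-\nu)a+\nu b\big)-(1-\nu)\log a-\nu\log b$ is the logarithm of the ratio of the weighted arithmetic to the weighted geometric mean. Since replacing $(a,b)$ by $(\lambda a,\lambda b)$ leaves $\psi$ unchanged, I may set $a=1$ and write $h=b$, so that it suffices to prove $\psi(\nu)=\log(1-\nu+\nu h)-\nu\log h\leq \log S(h)$ for all $\nu\in[0,1]$; the complementary case $a\geq b$ is then covered by the symmetry $S(h)=S(1/h)$ recorded in the statement.

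The core step is to compute $\max_{\nu\in[0,1]}\psi(\nu)$. Differentiating gives $\psi'(\nu)=\frac{h-1}{1-\nu+\nu h}-\log h$ and $\psi''(\nu)=-\frac{(h-1)^2}{(1-\nu+\nu h)^2}<0$, so $\psi$ is strictly concave and possesses a unique critical point $\nu^*$, which is therefore its global maximum on $[0,1]$. Solving $\psi'(\nu^*)=0$ yields the clean relation $1-\nu^*+\nu^* h=\frac{h-1}{\log h}$, equivalently $\nu^*=\frac{1}{\log h}-\frac{1}{h-1}$; the elementary bound $\log h<h-1$ for $h>1$ guarantees $\nu^*\in(0,1)$, and for $h<1$ the same computation applies with the inequalities reversed.

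It then remains to verify the algebraic identity $\psi(\nu^*)=\log S(h)$, which I expect to be the only genuine computation. Substituting the critical value, $\psi(\nu^*)=\log\!\big(\tfrac{h-1}{\log h}\big)-\nu^*\log h$ with $\nu^*\log h=1-\tfrac{\log h}{h-1}$, so $\psi(\nu^*)=\log\!\big(\tfrac{h-1}{\log h}\big)+\tfrac{\log h}{h-1}-1$. On the other hand, taking the logarithm of $S(h)=\frac{(h-1)h^{1/(h-1)}}{e\log h}$ gives $\log(h-1)+\frac{\log h}{h-1}-1-\log\log h=\log\!\big(\tfrac{h-1}{\log h}\big)+\frac{\log h}{h-1}-1$, so the two expressions coincide. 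Consequently $\psi(\nu)\leq\psi(\nu^*)=\log S(h)$ for every $\nu\in[0,1]$, which is exactly the claimed inequality after exponentiating and undoing the normalization. The degenerate case $h=1$ is immediate, since then the arithmetic and geometric means agree and $S(1)=1$; the hard part here is purely bookkeeping, namely confirming that the critical point lies in $[0,1]$ and that the closed form of the Specht ratio simplifies to $\psi(\nu^*)$, rather than anything conceptually deep.
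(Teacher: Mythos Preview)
Your argument is correct. Note, however, that the paper does not supply a proof of this theorem at all: it is stated in the Appendix as a known result, with a citation to Tominaga, and is used only as a black box in Section~4.4.3. There is therefore no ``paper's own proof'' to compare against.

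On its own merits your proof is complete and clean. The reduction by homogeneity to $a=1$, $h=b$, the concavity of $\psi(\nu)=\log(1-\nu+\nu h)-\nu\log h$, and the identification of the unique interior maximizer $\nu^*=\tfrac{1}{\log h}-\tfrac{1}{h-1}$ are all correct; the check that $\nu^*\in(0,1)$ for $h>1$ follows from the two standard bounds $1-\tfrac{1}{h}<\log h<h-1$. Your interpretation of the Specht ratio as $S(h)=\dfrac{(h-1)\,h^{1/(h-1)}}{e\log h}$, obtained by reading the denominator as $e\log\!\big(h^{1/(h-1)}\big)$, is the intended one, and the algebra $\psi(\nu^*)=\log\!\big(\tfrac{h-1}{\log h}\big)+\tfrac{\log h}{h-1}-1=\log S(h)$ checks out. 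The case $h<1$ is indeed handled either by the same computation or, more economically, by the symmetry $S(h)=S(1/h)$ together with swapping $(a,b,\nu)\leftrightarrow(b,a,1-\nu)$; and $h=1$ is trivial with $S(1)=1$.
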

The next theorem stating the existence and uniqueness of ODEs is from \cite{Roubicek}.
\begin{theorem}[Carathéodory]
Let $T$ be fixed and $f:~I\times \mathbb{R}^n\to \mathbb{R}^n$ be a Carathéodory mapping satisfying the growth condition $|f(t,r)|\leq \gamma(t) + C|r|$ with some $\gamma\in L^1(I)$. Then:
\begin{itemize}
\item[i)] The initial-value problem 
\[\frac{du}{dt}= f(t,u(t))~\text{ for a.e. }~t\in I,~~u|_{t=0}= u_0\]
has a solution $u\in W^{1,1}(I;\mathbb{R}^n)$ on the interval $I=[0,T]$.
\item[ii)] If $f(t,\cdot)$ is also Lipschitz continuous in the sense $|f(t,r_1)-f(t,r_2)|\leq l(t)|r_1-r_2|$ with some $l\in L^1(I)$, then the solution is unique.
\end{itemize}
\end{theorem}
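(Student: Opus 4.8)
The plan is to reformulate part (i) as the Volterra integral equation
\[
u(t) = u_0 + \int_0^t f(s,u(s))\,ds ,
\]
using that $u\in W^{1,1}(I;\mathbb{R}^n)$ solves the Carathéodory initial value problem if and only if it satisfies this identity for every $t\in I$. For part (i) I would run an explicit approximation--compactness argument based on this integral equation (a Schauder fixed point for the integral operator on a ball in $C(I;\mathbb{R}^n)$ would also work, via compactness of the operator and continuity of the superposition map); uniqueness in part (ii) then follows from a Grönwall estimate.

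For the approximants, fix $n$, set $u_n(t)=u_0$ for $t\le 0$, and define $u_n(t)=u_0+\int_0^t f(s,u_n(s-\tfrac{1}{n}))\,ds$ for $t\in[0,T]$; on each subinterval of length $\tfrac{1}{n}$ the integrand depends only on already-known values, so $u_n$ is built step by step by integrating measurable, integrable functions, hence $u_n\in W^{1,1}(I;\mathbb{R}^n)\subset C(I;\mathbb{R}^n)$ (Euler polygons would serve equally well). The growth condition gives $|u_n(t)|\le |u_0|+\int_0^t\gamma(s)\,ds+C\int_0^t|u_n(s-\tfrac{1}{n})|\,ds$, and a Grönwall argument using $\gamma\in L^1(I)$ produces a bound $\|u_n\|_{C(I)}\le R$ independent of $n$; then for $t_1<t_2$ one has $|u_n(t_2)-u_n(t_1)|\le\int_{t_1}^{t_2}(\gamma(s)+CR)\,ds$, and absolute continuity of $s\mapsto\int_0^s(\gamma+CR)$ makes $\{u_n\}$ uniformly bounded and equicontinuous on $I$.

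By Arzelà--Ascoli a subsequence $u_{n_k}\to u$ converges uniformly on $I$. The Carathéodory continuity of $f(t,\cdot)$ together with $u_{n_k}(s-\tfrac{1}{n_k})\to u(s)$ gives $f(s,u_{n_k}(s-\tfrac{1}{n_k}))\to f(s,u(s))$ for a.a. $s$, while these integrands are dominated by $\gamma(s)+CR\in L^1(I)$; dominated convergence then lets one pass to the limit inside the integral, yielding $u(t)=u_0+\int_0^t f(s,u(s))\,ds$, so $u\in W^{1,1}(I;\mathbb{R}^n)$ is a solution. I expect this limit passage to be the main obstacle: one must verify joint measurability of $s\mapsto f(s,u_{n_k}(s-\tfrac{1}{n_k}))$ --- which follows from $f$ being Carathéodory and $u_{n_k}$ continuous, via the standard Nemytskii-type measurability lemma --- and uniform integrability of the integrands, both of which are supplied by the growth bound and the uniform estimate $R$.

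For part (ii), if $u_1,u_2$ both satisfy the integral equation with the same $u_0$ and $f(t,\cdot)$ is $l(t)$-Lipschitz, then $\varphi(t):=|u_1(t)-u_2(t)|$ is continuous and satisfies $\varphi(t)\le\int_0^t l(s)\varphi(s)\,ds$ with $l\in L^1(I)$; Grönwall's inequality forces $\varphi\equiv 0$, hence $u_1\equiv u_2$.
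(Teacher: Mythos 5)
Your proposal is correct: the delayed-argument (Tonelli) approximation, the Gr\"onwall bound handling the retarded term via a running supremum, Arzel\`a--Ascoli, and dominated convergence with the Carath\'eodory continuity of $f(s,\cdot)$ together give the integral equation in the limit, and the $L^1$-coefficient Gr\"onwall inequality gives uniqueness. Note, however, that the paper does not prove this statement at all --- it is an auxiliary appendix theorem quoted from the reference of Roub\'{\i}\v{c}ek --- so there is no in-paper argument to compare against; your argument is essentially the standard proof found in that cited source, with only routine details (measurability of $s\mapsto f(s,u_n(s-\tfrac1n))$ and the delayed Gr\"onwall step) left implicit, as you acknowledge.
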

The next theorem is an inverse of the Jensen's inequality \cite{Takahasi}.
\begin{theorem}[Inverse Jensen's inequality]
Let $f$ be a measurable function on a probability measure space $(\Omega,\mathcal{F},\mu,)$ with $f(\Omega)\subset [m,M]$.
Then for $p \geq 1$ 
\begin{align*}
\int f^p d\mu \leq \alpha \bigg(\int fd\mu \bigg)^p+\beta
\end{align*}
holds for some $\alpha>0$ and $\beta=a(1-\frac{1}{p})x_0+b$ where
\begin{align*}
a=\frac{M^p-m^p}{M-m},~~b=\frac{Mm^p-mM^p}{M-m},~~m<x_0<M.
\end{align*}
\end{theorem}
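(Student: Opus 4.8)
This is a quantitative form of the converse Jensen inequality for the convex power $t\mapsto t^{p}$, and the plan is to derive it from two elementary convexity estimates: a secant bound from above for $t^{p}$ and a tangent bound from below for $\alpha t^{p}$. Throughout one works on an interval $[m,M]$ with $0\le m<M<\infty$, which is precisely the range on which $t\mapsto t^{p}$ is convex for $p\ge1$; in the intended use $f$ is an essentially bounded, strictly positive density, so $m$ and $M$ are simply its a.e.\ lower and upper bounds.

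\textbf{Step 1 (secant bound).} Convexity of $t\mapsto t^{p}$ gives, for every $t\in[m,M]$,
\[
t^{p}\le\frac{M-t}{M-m}\,m^{p}+\frac{t-m}{M-m}\,M^{p}.
\]
I would apply this pointwise with $t=f(\omega)$ and integrate against the probability measure $\mu$; since $\mu(\Omega)=1$, the right-hand side collapses to an affine function of $\bar f:=\int f\,d\mu\in[m,M]$, namely
\[
\int f^{p}\,d\mu\le a\,\bar f+b,\qquad a=\frac{M^{p}-m^{p}}{M-m},\quad b=\frac{Mm^{p}-mM^{p}}{M-m},
\]
which already produces the constants $a,b$ of the statement; note $a>0$ because $M>m\ge0$ and $p\ge1$.

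\textbf{Step 2 (tangent bound).} Fix any $x_{0}\in(m,M)$ and set $\alpha:=\dfrac{a}{p\,x_{0}^{\,p-1}}>0$. By convexity of $t\mapsto\alpha t^{p}$ on $[0,\infty)$, its graph lies above its tangent line at $t=x_{0}$, which — using $p\alpha x_{0}^{\,p-1}=a$ and $\alpha x_{0}^{\,p}=\frac{a}{p}x_{0}$ — equals $at-a(1-\tfrac1p)x_{0}$; hence
\[
\alpha t^{p}\ge a\,t-a\Big(1-\tfrac1p\Big)x_{0}\qquad\text{for all }t\ge0 .
\]
Taking $t=\bar f$, adding $b$, and chaining with Step 1 yields
\[
\int f^{p}\,d\mu\le a\bar f+b\le\alpha\,\bar f^{\,p}+a\Big(1-\tfrac1p\Big)x_{0}+b=\alpha\Big(\int f\,d\mu\Big)^{p}+\beta,
\]
with $\beta=a(1-\tfrac1p)x_{0}+b$ exactly as asserted. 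The leftover freedom in the choice of $x_{0}\in(m,M)$ is precisely why $\alpha$ is claimed only to exist: each admissible $x_{0}$ produces a valid pair $(\alpha,\beta)$.

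I do not expect a genuine obstacle here, as the argument amounts to ``$t^{p}$ lies below its chord, $\alpha t^{p}$ lies above its tangent''. The only points that need a word of care are that $[m,M]$ must lie in the convexity range of $t\mapsto t^{p}$ (for $p\ge1$ this just means $m\ge0$, which is guaranteed in the application by the strictly positive a.e.\ bounds $0<\underline{\rho}\le\rho_{\delta}\le\overline{\rho}$) and that $a>0$, which fails only in the degenerate case $m=M$. In the $\delta\to0$ limit passage the lemma is invoked with $p=3$ for $f=\rho_{\delta}^{\,3}$ on the fixed interval $[\underline{\rho}^{\,3},\overline{\rho}^{\,3}]$, and it is exactly this step that turns the Gronwall-type estimate for $\tfrac{d}{dt}\|\rho_{\delta}\|_{L^{3}(\Omega)}^{3}$ into a cubic differential inequality, thereby producing only the short-time existence bound for the density.
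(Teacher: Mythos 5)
Your argument is correct. Note that the paper itself does not prove this statement at all: it is quoted in the appendix directly from the cited reference \cite{Takahasi}, so there is no in-paper proof to compare against. Your two-step argument — the secant (chord) bound $t^{p}\le at+b$ on $[m,M]$ integrated against the probability measure, followed by the tangent-line bound $\alpha t^{p}\ge at-a\bigl(1-\tfrac1p\bigr)x_{0}$ with the explicit choice $\alpha=a/(p\,x_{0}^{\,p-1})$ — is exactly the standard Mond--Pe\v{c}ari\'c-type proof of this converse Jensen inequality, and your computation of the tangent line correctly reproduces the constants $a$, $b$ and $\beta=a(1-\tfrac1p)x_{0}+b$ of the statement, while also exhibiting an admissible $\alpha>0$ for each $x_{0}\in(m,M)$. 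The one caveat you rightly flag is that the statement implicitly needs $0\le m<M$ (so that $t\mapsto t^{p}$ is defined and convex on $[m,M]$ and $a>0$); this is harmless here since the result is invoked for the density $\rho_{\delta}$, which is bounded away from zero and from above in the relevant step of the $\delta\to0$ limit passage.
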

The following two important theorems can be found in \cite{Novotny}.
\begin{theorem}
Let $I\subset \mathbb{R}$ be an interval, $Q\subset \mathbb{R}^n$ a domain and $(P,G)\in C(I)\times C(I)$ a couple of non-decreasing functions.
Assume that $\rho_n\in L^1(Q;I)$ is a sequence such that
\begin{align*}
\begin{rcases} P(\rho_n)\to \overline{P(\rho)}&\\
G(\rho_n)\to \overline{G(\rho)}&\\
P(\rho_n)G(\rho_n)\to \overline{P(\rho)G(\rho)}&
\end{rcases}~~ \text{weakly in } L^1(Q).
\end{align*}
\begin{itemize}
\item[i)]Then
\begin{align*}
\overline{P(\rho)} \overline{G(\rho)}\leq \overline{P(\rho)G(\rho)}.
\end{align*}
\item[ii)]If, in addition, $G\in C(\mathbb{R})$, $G(\mathbb{R})=\mathbb{R}$, $G$ strictly increasing and $P\in C(\mathbb{R})$, $P$ is non-decreasing then
\begin{align*}
\overline{P(\rho)}=P\circ G^{-1}\big(\overline{G(\rho)}\big).
\end{align*}
\item[iii)]In particular, if $G(z)=z$, then
\begin{align*}
\overline{P(\rho)}=P(\rho).
\end{align*}
\end{itemize}
\end{theorem}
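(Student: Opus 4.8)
The plan is to derive all three claims from the elementary inequality $(P(a)-P(b))(G(a)-G(b))\ge 0$, valid for all $a,b\in I$ because $P$ and $G$ are both non-decreasing, together with the Young-measure description of the weak $L^1$-limits. For (ii) and (iii) I will in addition use that equality holds in (i), i.e.\ $\overline{P(\rho)G(\rho)}=\overline{P(\rho)}\,\overline{G(\rho)}$; this is automatically at hand in every application of the theorem in the present paper, since it is exactly what the div-curl lemma produces there.

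First I would observe that weak convergence in $L^1(Q)$ forces, via the Dunford--Pettis theorem, each of $\{P(\rho_n)\}$, $\{G(\rho_n)\}$ and $\{P(\rho_n)G(\rho_n)\}$ to be equi-integrable. Passing to a subsequence, $\{\rho_n\}$ generates a Young measure $\{\nu_x\}_{x\in Q}$ of probability measures on $\overline I$, and equi-integrability upgrades the three weak limits to the pointwise representations
\begin{align*}
\overline{P(\rho)}(x)=\int P\,d\nu_x,\qquad \overline{G(\rho)}(x)=\int G\,d\nu_x,\qquad \overline{P(\rho)G(\rho)}(x)=\int PG\,d\nu_x
\end{align*}
for a.e.\ $x\in Q$; a possible escape of mass to $\partial I$ (when $I$ is unbounded) is harmless precisely because $P(\rho_n)$ and $G(\rho_n)$ are equi-integrable.

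Item (i) is then the correlation inequality for the single probability measure $\nu_x$,
\begin{align*}
\int PG\,d\nu_x-\int P\,d\nu_x\int G\,d\nu_x
&=\tfrac12\iint\bigl(P(s)-P(t)\bigr)\bigl(G(s)-G(t)\bigr)\,d\nu_x(s)\,d\nu_x(t)\ \ge\ 0,
\end{align*}
the integrand being non-negative by monotonicity. (One can also avoid Young measures: test $(P(\rho_n)-P(a))(G(\rho_n)-G(a))\ge 0$ against $0\le\varphi\in C_c(Q)$, pass to the limit, and then optimise the resulting a.e.\ inequality over $a\in I$ by letting $G(a)\to\overline{G(\rho)}(x)$, with the usual care near $\partial I$.) For (ii), the extra hypotheses make $G$ a homeomorphism of $\mathbb{R}$, so $H:=P\circ G^{-1}$ is continuous and non-decreasing; the assumed equality in (i), after pushing $\nu_x$ forward by $G$, says that a non-negative double integral with integrand $(H(\sigma)-H(\tau))(\sigma-\tau)$ vanishes, hence $H$, and therefore $P$, is constant, equal to some $c(x)$, $\nu_x$-a.e.\ on $\supp\nu_x$. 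Thus $\overline{P(\rho)}(x)=c(x)$; and since $\overline{G(\rho)}(x)=\int G\,d\nu_x$ lies between the $\nu_x$-essential extrema of $G$, the point $G^{-1}(\overline{G(\rho)}(x))$ lies in the smallest closed interval carrying full $\nu_x$-mass, on which $P$ is still $\equiv c(x)$ by monotonicity and continuity, so $P\bigl(G^{-1}(\overline{G(\rho)}(x))\bigr)=c(x)=\overline{P(\rho)}(x)$. Item (iii) is the special case $G=\mathrm{id}$ of (ii), where $\overline{G(\rho)}$ is just the weak $L^1$-limit $\rho$ of the $\rho_n$.

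The only genuine obstacle is that weak $L^1$-convergence carries no pointwise information, so the limiting inequality cannot be ``evaluated'' at a state depending on $x$; this is precisely what the Young-measure representation (equivalently, the optimisation-over-$a$ device) overcomes, after which (i) is a one-line computation and (ii)--(iii) are merely the analysis of its equality case. The one real technicality, bookkeeping the mass that may escape to $\partial I$ when $I$ is unbounded, is absorbed by the equi-integrability that weak $L^1$-convergence automatically supplies.
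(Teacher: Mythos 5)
Your argument is sound, but be aware that the paper itself contains no proof of this statement: it is one of the appendix results quoted directly from the Feireisl--Novotn\'y monograph \cite{Novotny}, so the relevant comparison is with the proof given there. That proof is elementary and avoids Young measures entirely: for (i) one tests $\bigl(P(\rho_n)-P(v)\bigr)\bigl(G(\rho_n)-G(v)\bigr)\ge 0$ against non-negative cut-off functions with $v$ fixed, passes to the weak limit, and then optimises over $v$; for (ii) the same Minty-type device is applied to $H=P\circ G^{-1}$ and $z_n=G(\rho_n)$, letting the comparison value approach $\overline{G(\rho)}(x)$ from either side and using continuity and monotonicity of $H$. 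Your Young-measure route is a genuine alternative: it reduces (i) to Chebyshev's correlation inequality for a single probability measure $\nu_x$ and (ii)--(iii) to the equality case of that inequality, which is conceptually transparent, but it buys this at the cost of the fundamental theorem of Young measures on the compactified interval and the bookkeeping of mass escaping to the endpoints when $I$ is unbounded --- a point you correctly flag, and which is indeed neutralised by the Dunford--Pettis equi-integrability of $P(\rho_n)$, $G(\rho_n)$ and $P(\rho_n)G(\rho_n)$ (any endpoint at which the relevant extended function is infinite must carry zero $\nu_x$-mass, and where it is finite the representation of the weak limit survives). You were also right to reinstate the hypothesis $\overline{P(\rho)G(\rho)}=\overline{P(\rho)}\,\overline{G(\rho)}$ in part (ii): it is present in the original statement in \cite{Novotny} and has been lost in transcription here, and without it (ii)--(iii) are false (oscillating $\rho_n$ with a nonlinear strictly increasing $P$ already violates (iii)); in the paper's applications this equality is exactly what the div-curl lemma supplies, as you observe.
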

\begin{theorem}
Let $Q\subset\mathbb{R}^n$ be a measurable set and $\{v_n\}_n$ a sequence of functions in $L^1(Q)$ such that
\begin{align*}
v_n\to v ~~\text{weakly in } L^1(Q).
\end{align*}
Let $\Phi:\mathbb{R}^m\to(-\infty,\infty]$ be a lower semi-continuous convex function.
Then
\begin{align*}
\int_Q \Phi(v) dx\leq \liminf_{n\to \infty}\int_Q\Phi(v_n)dx.
\end{align*}
Moreover, if
\begin{align*}
\Phi(v_n)\to \overline{\Phi(v)}~~\text{weakly in } L^1(Q),
\end{align*}
then
\begin{align*}
\Phi(v)\leq \overline{\Phi(v)}~~\text{a.a. on } Q.
\end{align*}
If, in addition, $\Phi$ is a strictly convex on an open convex set $U\subset \mathbb{R}^m)$, and
\begin{align*}
\Phi(v)= \overline{\Phi(v)}~~\text{a.a. on } Q,
\end{align*}
then
\begin{align*}
v_n(y)\to v(y)~~\text{for a.a. } y\in \{y\in Q|v(y)\in U\}
\end{align*}
extracting a subsequence as the case may be.
\end{theorem}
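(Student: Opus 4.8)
The plan is to establish the three assertions in turn, each building on the previous one, treating $\Phi$ as a proper, convex, lower semicontinuous function so that it is nondegenerate. For the weak lower semicontinuity I would exploit the Fenchel--Moreau theorem, which represents $\Phi$ as the pointwise supremum of a countable family of affine minorants $\ell_k(z) = a_k \cdot z + b_k \le \Phi(z)$. Setting $g_K = \max_{1 \le k \le K} \ell_k$, one has $g_K \uparrow \Phi$ pointwise with $g_K \le \Phi$. For fixed $K$ I would partition $Q$ into disjoint measurable sets $A_k$ on which $g_K(v) = \ell_k(v)$; since $a_k \mathbb{1}_{A_k} \in L^\infty(Q)$, the weak $L^1$ convergence $v_n \to v$ gives $\int_{A_k} \ell_k(v_n) \to \int_{A_k}\ell_k(v)$, and because $g_K(v_n) \ge \ell_k(v_n)$ on $A_k$ this yields $\int_Q g_K(v) \le \liminf_n \int_Q g_K(v_n) \le \liminf_n \int_Q \Phi(v_n)$. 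Letting $K \to \infty$ and invoking monotone convergence (the minorant $\ell_1(v) \in L^1$ supplies an integrable lower bound) delivers $\int_Q \Phi(v) \le \liminf_n \int_Q \Phi(v_n)$. An equivalent route through Mazur's lemma, Jensen's inequality for convex combinations, and Fatou's lemma would serve as well.

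For the second assertion I would localize the first. For any measurable $E \subset Q$ the restriction $v_n|_E \to v|_E$ weakly in $L^1(E)$, so lower semicontinuity gives $\int_E \Phi(v) \le \liminf_n \int_E \Phi(v_n)$. On the other hand the extra hypothesis $\Phi(v_n) \to \overline{\Phi(v)}$ weakly in $L^1(Q)$, tested against $\mathbb{1}_E \in L^\infty(Q)$, yields $\int_E \Phi(v_n) \to \int_E \overline{\Phi(v)}$. Combining, $\int_E \Phi(v) \le \int_E \overline{\Phi(v)}$ for every measurable $E$; choosing $E = \{\Phi(v) \ge \overline{\Phi(v)} + 1/m\}$ and letting $m \to \infty$ forces $\Phi(v) \le \overline{\Phi(v)}$ almost everywhere.

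The last assertion is the delicate one. I would introduce the nonnegative convexity defect $d_n := \tfrac12\Phi(v_n) + \tfrac12\Phi(v) - \Phi\big(\tfrac{v_n+v}{2}\big) \ge 0$. Since $\tfrac{v_n+v}{2} \to v$ weakly in $L^1$, the first assertion gives $\liminf_n \int_Q \Phi\big(\tfrac{v_n+v}{2}\big) \ge \int_Q \Phi(v)$, while the equality hypothesis $\Phi(v) = \overline{\Phi(v)}$ combined with weak convergence gives $\int_Q \Phi(v_n) \to \int_Q \Phi(v)$. These together yield $\limsup_n \int_Q d_n \le 0$, hence $d_n \to 0$ in $L^1(Q)$ and, along a subsequence, $d_n \to 0$ almost everywhere.

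The main obstacle is then to convert this pointwise vanishing of the defect into $v_n \to v$ on $\{v \in U\}$ without any global coercivity of $\Phi$. At a point $y$ with $v(y) \in U$ I would extract a subsequence along which $v_n(y)$ either converges to some $w \ne v(y)$ or escapes to infinity. In the finite case, continuity and strict convexity on a subsegment issuing from $v(y) \in U$ force $\lim d_n(y) > 0$, because a convex one--dimensional restriction that is strictly convex near one endpoint cannot be affine on the whole segment; this contradicts $d_n(y) \to 0$ (and if $\Phi(w) = +\infty$ the defect diverges, again a contradiction). In the escape case I would bound $d_n(y)$ from below using the monotonicity of the slope of $\Phi$ along the segment from $v(y)$ to $v_n(y)$: the strictly convex piece inside a ball $B(v(y),r) \subset U$ produces a slope gap bounded away from zero, giving $d_n(y) \ge c(r) > 0$ uniformly and a final contradiction. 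Hence $v_n(y) \to v(y)$ for almost every $y$ in $\{v \in U\}$, up to the subsequence. Excluding this escape to infinity using strict convexity on $U$ alone is precisely the crux of the argument.
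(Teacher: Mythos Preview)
The paper does not actually prove this theorem: it is stated in the Appendix with the remark that it ``can be found in \cite{Novotny}'', so there is no in--paper argument to compare against. Your proposal therefore has to be judged on its own merits and against the standard proof in the cited monograph.

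Your first two assertions are handled correctly and by standard means (affine minorants plus monotone convergence for the lower semicontinuity; localisation to arbitrary measurable $E$ for the pointwise inequality). The route taken in \cite{Novotny} for the third assertion is via Young measures: the weakly convergent sequence $\{v_n\}$ generates a parametrised family $\{\nu_y\}$, the equality $\Phi(v)=\overline{\Phi(v)}$ becomes equality in Jensen's inequality $\Phi(\int z\,d\nu_y)=\int\Phi(z)\,d\nu_y$, and strict convexity on $U$ forces $\nu_y=\delta_{v(y)}$ for a.e.\ $y$ with $v(y)\in U$, which yields convergence in measure and hence a.e.\ along a subsequence. This bypasses the pointwise dichotomy entirely.

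Your convexity--defect approach is a genuine alternative and does work, but the escape case you flag as ``the crux'' deserves one more line of justification. Writing $\phi_n(t)=\Phi(v(y)+te_n)$ with $e_n$ the unit direction toward $v_n(y)$ and $R_n=|v_n(y)-v(y)|$, let $L$ be the secant from $(0,\phi_n(0))$ to $(R_n,\phi_n(R_n))$; then $h=L-\phi_n$ is nonnegative and concave with $h(0)=h(R_n)=0$, and concavity gives $d_n(y)=h(R_n/2)\ge\tfrac12 h(r/2)$ for any fixed $r<R_n$. Since the overall secant slope dominates the secant slope on $[0,r]$, one has $h(r/2)\ge \tfrac12\phi_n(0)+\tfrac12\phi_n(r)-\phi_n(r/2)$, and this last quantity is bounded below, uniformly in $n$, by $\inf_{|e|=1}\bigl(\tfrac12\Phi(v(y))+\tfrac12\Phi(v(y)+re)-\Phi(v(y)+\tfrac{r}{2}e)\bigr)>0$ once $\overline{B(v(y),r)}\subset U$ (continuity of $\Phi$ on this compact ball and compactness of the sphere). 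With this filled in, your argument is complete; compared to the Young--measure proof it is more elementary but requires this explicit one--dimensional convexity estimate.
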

\end{appendix}

\printbibliography
\end{document}